\DeclareSymbolFontAlphabet{\Bbb}{AMSb}
\newlength{\fixboxwidth}
\newcommand{\COMMENT}[1]{}
\newcommand{\E}{\mathbb{E}}
\newcommand{\Dmaps}{\mathfrak{D}}
\newcommand{\Dmap}{\mathbb{D}}
\newcommand{\Dspace}{\mathcal{D}}
\newcommand{\supp}{\operatorname{supp}}
\newcommand{\eins}{\mathbbm{1}}
\newcommand{\one}{\mathbbm{1}}
\newcommand{\R}{\mathbb{R}}
\newcommand{\e}{\varepsilon}
\newtheorem{thm}{Theorem}[section]
\newtheorem{prop}[thm]{Proposition}
\newtheorem{lem}[thm]{Lemma}
\newtheorem{cor}[thm]{Corollary}
\theoremstyle{definition}
\newtheorem{rmk}[thm]{Remark}
\def \d         { \delta }
\title{Brittleness of Bayesian inference and new Selberg formulas\footnotetext{	2010 Mathematics Subject Classification:
	62A01, 
	62F12, 
	62F15, 
	62G20, 
	62G35. 
    46E22, 
    11M36 
\\ Keywords:  Bayesian inference, misspecification, robustness, uncertainty quantification, optimal uncertainty quantification, reproducing kernel Hilbert spaces (RKHS), Selberg integral formulas.\\
Houman Owhadi:  owhadi@caltech.edu\\Clint Scovel: corresponding author,  clintscovel@gmail.com
}
}
\author{Houman Owhadi \& Clint Scovel \\
California Institute of Technology}
\date{\today}
\renewcommand{\thefigure}{\arabic{section}.\arabic{figure}}
\renewcommand{\p@subfigure}{\thefigure}
\newcounter{mycount}
\begin{document}

\maketitle

 The incorporation of priors \cite{BayesOUQ} in the
 Optimal Uncertainty Quantification (OUQ) framework \cite{OSSMO:2011}  reveals
  brittleness in Bayesian inference; a model may share an arbitrarily large number of finite-dimensional marginals with, or be arbitrarily close (in Prokhorov or total variation metrics) to, the data-generating distribution and still make the largest possible prediction error after conditioning on an arbitrarily large number of  samples.
The initial purpose of this paper is to unwrap this  brittleness mechanism by providing (i)  a quantitative version of the Brittleness Theorem of \cite{BayesOUQ} and (ii) a detailed and comprehensive analysis of its application to the revealing example of estimating the mean of a random variable on the unit interval $[0,1]$ using priors that exactly capture the distribution of an arbitrarily large number of Hausdorff moments.

However, in doing so, we discovered that the free parameter associated with Markov and Kre\u{\i}n's  canonical representations of truncated Hausdorff moments
generates  reproducing kernel identities corresponding to  reproducing kernel Hilbert spaces of  polynomials.
 Furthermore, these reproducing identities lead to biorthogonal systems of Selberg integral formulas.

This process of discovery appears to be generic: whereas Karlin and Shapley used Selberg's integral formula to first compute the volume of the Hausdorff moment space (the polytope defined by the first $n$ moments of a probability measure on the interval $[0,1]$), we observe that the computation of that volume along with higher order moments of the uniform measure on the moment space, using different finite-dimensional representations of subsets of the infinite-dimensional set of probability measures on $[0,1]$
 representing the first $n$ moments, leads to families of equalities corresponding to classical and new Selberg identities.

\newpage

\tableofcontents

\section{Introduction}

 Optimal Uncertainty Quantification (OUQ) \cite{OSSMO:2011} provides a framework
for the computation of optimal bounds on quantities of interest -given a set of available information
and specified assumptions. Although this framework is neither frequentist nor Bayesian,
 in that it is simply expressed in terms of optimization  over measures and functions, a natural question arises; what happens when we introduce priors into OUQ?
In Owhadi et al.~\cite{BayesOUQ}, this program was initiated  through the introduction of a further set of assumptions, namely, the
assumptions regarding the prior  on the specified assumption set.
A corresponding reduction theory for optimization problems over measures on  spaces of measures is established, facilitating
 the computation of optimal bounds on prior and posterior values
and the analysis of the consequences of conditioning on observed data.
However, the completion of this program reveals  Brittleness theorems \cite[Thm.~4.13, Thm.~6.4, Thm.~6.9]{BayesOUQ}
 for Bayesian Inference -mild assumptions  are sufficient to demonstrate that, given a set of priors, conditioning on observations
can produce arbitrary results, regardless of the sample size.

Although it is known from the results of Diaconis and Freedman that the Bayesian method may fail to converge or may converge towards the wrong solution (i.e., be inconsistent) if the underlying probability mechanism allows an infinite number of possible outcomes \cite{DiaconisFreedman:1986} and that in these non-finite-probability-space situations, this lack of convergence (commonly referred to as \emph{Bayesian inconsistency}) is the rule rather than the exception \cite{DiaconisFreedman:1998}, it is also known, from the Bernstein-Von Mises Theorem \cite{Bernstein:1964, vonMises:1964} (see also LeCam \cite{LeCam:1953}), that consistency (convergence upon observation of sample data) does indeed hold, under some regularity conditions, if the data-generating distribution of the sample data belongs to the finite
 dimensional family of distributions parameterized by the model. Furthermore, although it is also known that this convergence may fail under model misspecification \cite{White:1982, Grunwald:2004, Grunwald:2006, Christophe:2002, Christophe:2008, Kleijn:2012, Lian:2009, Gustafson:2001} (i.e. when  the data-generating distribution does not belong to the
  family of distributions parameterized by the model), it is natural to wonder whether a ``close enough'' model has good convergence properties: see e.g.~\cite{Draper:1995, Samaniego:2010, Draper:2013} and in particular G.\ E.\ P.\ Box's  question \cite[p.~74]{Box:1987} ``Remember that all models are wrong; the practical question is how wrong do they have to be to not be useful?''

 The Brittleness theorems \cite[Thm.~4.13, Thm.~6.4, Thm.~6.9]{BayesOUQ} suggest that there
 may be no such thing as a ``close enough'' model if Box's question is answered
  in the classical framework of  Bayesian sensitivity analysis (where given the data and a class of priors one computes optimal bounds on posterior values); indeed, if ``closeness'' is defined  (i) as sharing an arbitrarily large  finite  number of finite-dimensional marginals or
 (ii) using the Prokhorov or total variation metrics, then the posterior values of such ``close'' models may be as distant as possible after conditioning on an arbitrarily large number of sample data.

The primary motivation for this paper is to unwrap the mechanism causing  this brittleness by providing (i)  a quantitative version of the Brittleness Theorem \cite[Thm.~4.13]{BayesOUQ} and (ii) a detailed and comprehensive analysis of its application to the informative example from \cite[Ex.~4.16]{BayesOUQ} of estimating the mean of a random variable on the unit interval  using priors that exactly capture the distribution of an arbitrary large number of Hausdorff moments.
In this  example, the probability distribution $\mu^\dagger$ of $X$ is an unknown element of the set of all possible probability distributions on $[0,1]$, i.e.
 $\mu^\dagger \in \mathcal{A}:=\mathcal{M}([0,1])$. The set of prior probability distributions $\pi$ on $\mu \in \mathcal{A}$ (i.e. $\pi \in \mathcal{M}(\mathcal{A})$) is defined as the set of priors $\pi$ under which the  vector of truncated Hausdorff moments $(\E_\mu[X],\ldots,\E_\mu[X^n])$ is uniformly distributed on the  truncated Hausdorff moment set
$M^n\subset \R^n$ defined as the set of $q=(q_1,\ldots,q_n)\in \R^n$ such that there exists a  probability measure $\mu$ on $[0,1]$ with $\E_\mu[X^i]=q_i$ for $i\in \{1,\ldots,n\}$  ($M^n$ is the polytope of $\R^n$ corresponding to the set of possible values for the first $n$ moments of a measure of probability on the interval $[0,1]$)).
In this case, the computation of optimal bounds on posterior values leads naturally to the calculation of the  Lebesgue volume of certain
  subsets of the set $M^{n}$ of truncated Hausdorff moments.

Curiously, whereas Karlin and Shapley \cite{KarlinShapley} used  Selberg's  integral formula to first compute
the volume of the truncated Hausdorff moment space $M_n$, inadvertantly stimulating the development of the theory of the Selberg integral formulas\footnote{In  discussing the history and  importance of the Selberg integral formulas,    Forrester and Waardan
\cite[Pg.~3]{ForresterWarnaar} mention their first application: ``For over thirty years
the Selberg integral went essentially unnoticed. It was used only once—-in the
special case $ \alpha = \beta = 1, \gamma = 2$—-in a study by S. Karlin and L.S. Shapley relating
to the volume of a certain moment space, published in 1953.''}, it appears that computing the volume of
 the truncated
 Hausdorff moment space $M^n$ using different finite-dimensional representations of $M^n$ in the infinite-dimensional space  $\mathcal{M}\big([0,1]\big)$
  reveals a new family of Selberg integral formulas (see Theorems \ref{thm_selberg}, \ref{thm_selberg2}, \ref{thm_selberg_explicit} and Corollary \ref{cor_selberg2}).
This process of discovery  appears to be generic and we will now describe its main principles.

  Let $\Psi$ be the function mapping each measure $\mu \in \mathcal{M}\big([0,1]\big)$ into its first $n$ moments
\begin{equation}
\Psi(\mu):=\big(\mathbb{E}_{X\sim \mu}[X], \mathbb{E}_{X\sim \mu}[X^2],\ldots,\mathbb{E}_{X\sim \mu}[X^n]\big).
\end{equation}
 Note that
\begin{equation}
M^n:=\Psi\Big(\mathcal{M}\big([0,1]\big)\Big).
\end{equation}
The classical and new Selberg identities are obtained by computing the volume of $M^n$ using different finite dimensional representations in $\mathcal{M}\big([0,1]\big)$. These finite dimensional representations are obtained by restricting $\Psi$ to convex sums of Diracs, i.e. to  measures $\mu \in \mathcal{M}\big([0,1]\big)$ of the form
\begin{equation}\label{eq:musumdiracs}
\mu=\sum_{j=1}^N \lambda_j \delta_{t_j}
\end{equation}
 where $0\leq t_1<\cdots<t_N\leq 1$ and $\lambda_1,\ldots,\lambda_N>0$ with $\sum_{j=1}^N \lambda_j=1$. Note that if $\mu$ is of the form \eqref{eq:musumdiracs}, then $\Psi(\mu)=(q_1,\ldots,q_n)$ with $q_i=\sum_{j=1}^N \lambda_j t_j^i$.

For each measure $\mu$ of the form \eqref{eq:musumdiracs}, we define $i(\mu)$, the index of $\mu$, as the number of support points (Diracs) of $\mu$, counting interior points with weight $1$ and boundary points with weight $1/2$. We call $\mu$ ``principal'' if $i(\mu)=(n+1)/2$, ``canonical'' if $i(\mu)=(n+2)/2$, ``upper'' if support points include $1$, ``lower'' if support points do not include $1$.
Then Theorem \ref{thm_principal} asserts that each  $q \in Int(M^{n})$ has a unique
upper and lower principal representation. Since  the volume of $M_n$ is independent of the representation used to compute it, computing that volume with a lower and an upper representation leads to an equality corresponding to classical Selberg identities.

Now let $t_{*}\in (0,1)$. Theorem \ref{thm_canonical} asserts that  every point in the interior of $M^{n}$ has a unique canonical representation
 whose support contains $t_{*}$, and when $t_{*}=0$ or $1$, then there exists a unique principal representation whose support contains $t_{*}$.
Since  the volume of $M_n$,  and the higher order moments of the uniform measure restricted to $M_n$,
 are independent of the representation used to compute them, computing these "moment moments"
  for all possible values of $t^*$ leads to a family of equalities corresponding to new  Selberg integral formulas and Reproducing Kernel Hilbert Spaces.
Consequently, the free parameter $t_*$ associated with Markov and Kre\u{\i}n's  canonical representations of truncated Hausdorff moments (see Section \ref{sec_IntegralGeometry}) which, along with their principal representations, so handily provides us with the means to prove the quantitative Brittleness Theorem \ref{thm_shiva_singlesample}, is found to generate  reproducing kernel identities
 corresponding to  reproducing kernel Hilbert spaces of
 polynomials (see sections \ref{sec_mean} and \ref{sec_RKHS}).
 Furthermore, these reproducing identities lead to biorthogonal systems of
Selberg integral formulas described in Theorems \ref{thm_selberg}, \ref{thm_selberg2} and \ref{thm_selberg_explicit} (see also Corollary \ref{cor_selberg2}).

  Moreover, although not done here, this process can easily be generalized in simple ways. For example,
 the argument is valid using any measure on the moment space, not just the uniform measure, and so
 the introduction of alternatives for which the integrals can likewise be computed, can be used.
In addition, it also appears possible that this process can be repeated with multiple free parameters
 $t_{*,1},\ldots,t_{*,k}$ to obtain even richer classes of (new) Selberg integral formulas.

\section{OUQ with Priors}\label{sec:ouqwithpriors}
To understand  OUQ  one simply starts with \u{C}eby\u{s}ev
  \cite[Pg.~4]{Krein}
`` Given: length, weight, position of the centroid and moment of inertia of a material rod with a density
varying from point to point. It is required to find the most accurate limits for the weight of a certain
segment  of this rod.'' According to  Kre\u{\i}n \cite{Krein}, although \u{C}eby\u{s}ev did solve this problem, it was his student
 Markov
who supplied the proof in his thesis. See Kre\u{\i}n \cite{Krein}  for an account of the
 history of this subject
   along with substantial contributions by Kre\u{\i}n.
We take this mindset and apply it to more complex problems, extending the base space to functions and measures,
and, instead of developing sophisticated mathematical solutions, develop optimization problems and reductions,
 so that their solution may be implemented on a computer,
as in Bertsimas and Popescu's \cite{BertsimasPopescu:2005} convex optimization approach to
\u{C}eby\u{s}ev  inequalities, and the Decision Analysis framework of Smith \cite{Smith}.
In addition to the determination of optimal bounds as a function of available information and assumptions, the OUQ methodology
has the substantial benefit of demanding that different components of an organization work together to
come up with information and assumptions that, together, they believe in.

Let us begin with a  general formulation of OUQ with priors, where the base assumptions are sets of (function, measure) pairs and
the secondary assumptions are sets of
 priors, that is, sets of probability measures defined on the base assumption set. Later, when we apply to Bayesian inference, we will restrict
to a base  assumption set consisting of a set of measures and a secondary assumption consisting of a set
probability measures on the base assumption set.
 To that end,
let $\mathcal{X}$ be a topological space, $\mathcal{M}(\mathcal{X})$ the space of Borel probability measures on $\mathcal{X}$, and let $\mathcal{G} \subseteq \mathcal{F}(\mathcal{X})$
 be a subset  of the real-valued measurable functions $\mathcal{F}
(\mathcal{X})$ on  $\mathcal{X}$.  Let $\mathcal{A}$ be an arbitrary subset of $\mathcal{G} \times \mathcal{M}(\mathcal{X})$, and let $\Phi \colon \mathcal{G} \times \mathcal{M}(\mathcal{X}) \to \R$
be a function producing a quantity of interest.  In the context of uncertainty quantification one is interested in estimating $\Phi(f^\dagger,\mu^\dagger)$, where $(f^\dagger,\mu^\dagger)\in \mathcal{G} \times \mathcal{M}(\mathcal{X})$ corresponds to an \emph{unknown reality}.  If $\mathcal{A}$ represents all that is known about $(f^\dagger,\mu^\dagger)$ (in the sense that $(f^\dagger,\mu^\dagger)\in \mathcal{A}$ and that any $(f, \mu)\in \mathcal{A}$ could, a priori, be $(f^\dagger,\mu^\dagger)$ given the available information) then \cite{OSSMO:2011} shows that the  quantities
\begin{eqnarray}
	\label{eq:defma1}
	\mathcal{U}(\mathcal{A})&:=& \sup_{(f, \mu)\in \mathcal{A}} \Phi(f, \mu)\\
	\label{eq:defma2}
	\mathcal{L}(\mathcal{A})&:=& \inf_{(f, \mu)\in \mathcal{A}} \Phi(f, \mu)
\end{eqnarray}
determine  the inequality
\begin{equation}
\label{ineq_OUQ}
  \mathcal{L}(\mathcal{A}) \leq \Phi(f^\dagger,\mu^\dagger) \leq \mathcal{U}(\mathcal{A})
\end{equation}
to be optimal with respect to the available information (i.e.~$(f^\dagger,\mu^\dagger) \in \mathcal{A}$)  as follows: First, it is simple to see that the inequality \eqref{ineq_OUQ}  follows  from
 $(f^\dagger,\mu^\dagger) \in \mathcal{A}$.
Moreover, for any $\e >0$  there exists a $(f,\mu) \in \mathcal{A}$ such that
\[ \mathcal{U}(\mathcal{A})-\e < \Phi(f,\mu) \leq   \mathcal{U}(\mathcal{A}).\] Consequently since all that we know about
$(f^\dagger,\mu^\dagger)$ is that $(f^\dagger,\mu^\dagger) \in \mathcal{A}$, it follows that the upper bound
$\Phi(f^\dagger,\mu^\dagger) \leq \mathcal{U}(\mathcal{A})$ is the best obtainable given that information. The lower bound is clearly optimal in the same sense.

        A classical example of a quantity of interest is the
 validation and certification quantity $\Phi(f,\mu):=\mu[f\geq a]$ where $a$ is a safety margin. In the certification context one is interested in showing that
$\mu^\dagger[f^{\dagger}\geq a]\leq \epsilon$ where $\epsilon$ is a safety certification threshold (\emph{i.e.}\ the maximum acceptable $\mu^\dagger$-probability of the system $f^{\dagger}$ exceeding the safety margin $a$).  If $\mathcal{U}(\mathcal{A}) \leq \epsilon$, then the system associated with $(f^\dagger,\mu^\dagger)$ is safe (given the information represented by $\mathcal{A}$).  If $\mathcal{L}(\mathcal{A}) > \epsilon$, then the system associated with $(f^\dagger,\mu^\dagger)$ is unsafe.  If $\mathcal{L}(\mathcal{A}) \leq \epsilon < \mathcal{U}(\mathcal{A})$, then the safety of the system cannot be decided without making further assumptions or gathering further information.

Although the OUQ optimization problems \eqref{eq:defma1} and \eqref{eq:defma2} are extremely large, we have shown in \cite{OSSMO:2011} that an important subclass enjoys significant and practical finite-dimensional reduction properties.  In particular, for assumption sets corresponding to
  linear inequality constraints on generalized moments, the search can be reduced to one over probability measures that are products of finite convex combinations of Dirac masses with explicit upper bounds on the number of Dirac masses.

To incorporate priors,
we define a prior $\pi$ to be
a probability measure $\pi \in \mathcal{M}(\mathcal{A})$, and  define the value
$\bar{\Phi}(\pi)$ of $\pi$ through the extended quantity of interest $\bar{\Phi}:\mathcal{M}(\mathcal{A}) \rightarrow \R$
defined by
\begin{equation*}
\label{def_interest_can} \bar{\Phi}(\pi):=\E_{\pi}[\Phi], \quad \pi \in \mathcal{M}(\mathcal{A}).
\end{equation*}
We will defer the nontrivial and not uninteresting topics of measurability to when we  analyze
  the full OUQ with priors framework, but note that
 Ressel \cite{Ressel_some} has established important and relevant results for us already, in particular the measurability of
the validation and certification quantity of interest discussed above under mild conditions.

We call the value $\E_{\pi}[\Phi]$ the
{\em prior value}, and  for a family of priors $\Pi \subset \mathcal{M}(\mathcal{A})$
we note that
 the values
\begin{eqnarray}
        \label{eq:UPi}
        \mathcal{U}(\Pi)& := & \sup_{\pi \in \Pi} \E_{\pi} \big[\Phi\big]\\
        \label{eq:LPi}
        \mathcal{L}(\Pi)& := &\inf_{\pi \in \Pi} \E_{\pi} \big[\Phi \big]
\end{eqnarray}
form a natural generalization of the notations
$\mathcal{U}(\mathcal{A})$ and  $\mathcal{L}(\mathcal{A})$.
Moreover, in the same way that $\mathcal{U}(\mathcal{A})$ and $\mathcal{L}(\mathcal{A})$ are optimal
upper and lower bounds on $\Phi(f^\dagger,\mu^\dagger)$ given the information that
$(f^\dagger,\mu^\dagger) \in \mathcal{A}$,  $\mathcal{U}(\Pi)$ and $\mathcal{L}(\Pi)$ are optimal upper and lower bounds on $\E_{\pi}\big[\Phi\big]$ given the information that $\pi \in \Pi$.

For conditioning on sample data in an observation space $\mathcal{D}$,
we  begin by defining a {\em  data map}
\[\Dmap:\mathcal{A}\rightarrow \mathcal{M}(\mathcal{D})\]
which specifies that $\Dmap(f,\mu)\in \mathcal{M}(\mathcal{D})$ generates the data
when the truth is  $(f,\mu) \in \mathcal{A}$. Then, given a
prior
 $\pi \in \mathcal{M}(\mathcal{A})$, we  define a probability measure
 \[\pi\odot \Dmap \in
 \mathcal{M}\bigl(\mathcal{A} \times \mathcal{D}\bigr)\]
through
\begin{equation*}\label{eq:palm}
	\pi\odot \Dmap \big[ A \times B \big] = \E_{(f.\mu) \sim \pi} \big[ \one_{A}(f, \mu) \Dmap(f, \mu)[B] \big],\quad A \in \mathcal{B}(\mathcal{A}),\, B \in \mathcal{B}(\Dspace)\, ,
\end{equation*}
 where $\one_{A}$ is the indicator function of the set $A$:
\[
	\one_{A}(f,\mu) :=
	\begin{cases}
		1, & \text{ $(f,\mu) \in A$,} \\
		0, & \text{ $(f,\mu) \notin A$.}
	\end{cases}
\]
We denote the resulting $\Dspace$-marginal by $ \pi\cdot\Dmap \in \mathcal{M}(\Dspace)$ which satisfies
\begin{equation*}
        \label{eq:cdotexp2}
        \pi\cdot\Dmap[B]:=\E_{(f,\mu)\sim \pi}\big[\Dmap(f,\mu)[B]\big]\, .
\end{equation*}

Given an observation $d \in \mathcal{D}$, to simultaneously avoid the ill-definedness of regular conditional probabilities and incorporate uncertainty in the observation process, we consider conditioning
on an open subset $B$ containing $d$ such that $\pi\cdot \Dmap[B]>0$. The naturality of this positivity condition is fully discussed in  \cite{BayesOUQ}, in particular it is easy to show
 that if $B$ is  an open ball of center $\delta$ around the data $d$ (noted $B_\delta$) and if the data is randomized and distributed according to $\pi\cdot \Dmap$, then the probability of the event $\pi\cdot \Dmap[B_\delta]>0$ is one.
It is also shown in \cite{BayesOUQ} that if the probability of the data is uniformly bounded, in the Bayesian model class $\mathcal{A}$, from above and below by that of a reference measure (e.g., for all $(f,\mu)\in \mathcal{A}$, $\frac{1}{\alpha }\Dmap(f_0,\mu_0) \leq \Dmap(f,\mu)[B_\delta]\leq \alpha \Dmap(f_0,\mu_0)$ for some reference measure $\Dmap(f_0,\mu_0)$) then learning and robustness appear as antagonistic properties:
when $\alpha=1$, the data is equiprobable under all measures in the model class, posterior values are equal to prior values, the method is robust but learning is not possible, and as $\alpha$ deviates from, learning becomes possible (posterior values depend on the data) but the method becomes increasing brittle (the range of posterior values converges towards that  of the quantity of interest $\Phi$).

  To simplify notation, we henceforth drop the notational dependence
of the set $B$ on the point $d$.
The conditional expectation, given a prior $\pi$ and data map $\Dmap$, conditioned on a subset $ B \in \mathcal{B}(\Dspace)$ such that $\pi\cdot \Dmap[B]>0$, is 
\begin{equation*}\label{eq:defcondexp}
	\E_{\pi \odot \Dmap}\big[ \Phi\big| B\big]=\frac{\E_{(f,\mu)\sim \pi}\big[  \Phi(f,\mu)
 \Dmap(f,\mu)[B] \big]}{\E_{(f,\mu)\sim\pi}\big[\Dmap(f,\mu)[B]\big]}\, .
\end{equation*}

 To represent uncertainty regarding the data generating process, instead of a single
data map $\Dmap:\mathcal{A}\rightarrow \mathcal{M}(\Dspace)$, we instead
 specify a
set \[\Dmaps =\bigl\{\Dmap:\mathcal{A}\rightarrow \mathcal{M}(\Dspace)\bigr\}\]
of data maps
 and represent our assumptions regarding the data
 with the statement
$\Dmap \in \Dmaps$.
Therefore, having specified a set $\Pi$ of priors, and a set $\Dmaps$ of data maps,
 for an open subset $B \subseteq \Dspace$,
we define the set of all possible resulting product measures to be
\begin{equation*}
\label{def_B}
	\Pi\odot_B\Dmaps := \Bigl\{ \pi \odot \Dmap: \pi\in  \Pi,\, \Dmap \in  \Dmaps, \, (\pi \cdot \Dmap)[B]> 0 \Bigr\}\, .
\end{equation*}
The notations $ \mathcal{U}(\Pi)$ and $ \mathcal{L}(\Pi)$  of \eqref{eq:UPi} and \eqref{eq:LPi}
extend naturally to
 these conditional expectations as
\begin{equation*}\label{eq:Upostes}
	\mathcal{U}(\Pi\odot_B\Dmaps):=\sup_{\pi\odot \Dmap \in \Pi\odot_B\Dmaps}\E_{\pi \odot \Dmap}\big[ \Phi\big| B\big]
\end{equation*}
\begin{equation*}\label{eq:Lpostes}
	\mathcal{L}(\Pi\odot_B\Dmaps):=\inf_{\pi\odot \Dmap \in \Pi\odot_B\Dmaps}\E_{\pi \odot \Dmap}\big[ \Phi\big| B\big]\, ,
\end{equation*}
where we note that, just as for $\mathcal{U}(\mathcal{A})$ and $\mathcal{L}(\mathcal{A})$ and
$\mathcal{U}(\Pi)$ and $\mathcal{L}(\Pi)$,
   $\mathcal{U}(\Pi\odot_{B}\Dmaps)$ and
 $\mathcal{L}(\Pi\odot_{B}\Dmaps)$
are optimal upper and lower bounds on the posterior value $\E_{\pi\odot \Dmap}\big[ \Phi\big|  B\big]$, given
 the assumptions that  $\pi\in \Pi$,  $\Dmap \in \Dmaps$, and  $\pi \cdot \Dmap(B)>0$.

We are now prepared to discuss the brittleness theorems of the next section.
Indeed, it is easy to see that
\begin{equation}\label{eq:upilpi}
\mathcal{L}(\mathcal{A})\leq  \mathcal{L}(\Pi)  \leq  \mathcal{U}(\Pi)\leq   \mathcal{U}(\mathcal{A})
\end{equation}
and
\[\mathcal{L}(\mathcal{A})\leq  \mathcal{L}(\Pi\odot_B\Dmaps)  \leq  \mathcal{U}(\Pi\odot_B\Dmaps) \leq  \mathcal{U}(\mathcal{A}) \, .\]
What the Brittleness Theorem \ref{thm_shiva} will show is that, under mild conditions, regardless of where
 the values $\mathcal{L}(\Pi)$ and
$\mathcal{U}(\Pi)$ lie in \eqref{eq:upilpi}
we have
\[  \mathcal{L}(\Pi\odot_B\Dmaps) \approx \mathcal{L}(\mathcal{A}) \text{ and }\mathcal{U}(\Pi\odot_B\Dmaps) \approx \mathcal{U}(\mathcal{A})\,, \]
that is, conditioning on the observed data, one can obtain any value
between $\mathcal{L}(\mathcal{A})$ and $\mathcal{U}(\mathcal{A})$
for the posterior value
$ \E_{\pi \odot \Dmap}\big[ \Phi\big| B\big]$ for some admissible prior $\pi \in\Pi$ and data map
$\Dmap \in \Dmaps$.

\section{Quantification of Bayesian Brittleness}

The following theorem is the Main Brittleness result of \cite[Thm.~4.13]{BayesOUQ}:
\begin{thm}
        \label{thm:shiva0}
        Let $\mathcal{A}$ be a Suslin space, let $\mathcal{Q}$ be a separable and metrizable space, and let $\Psi \colon \mathcal{A} \to \mathcal{Q}$ be measurable.  Moreover, let $\mathfrak{Q} \subseteq \mathcal{M}(\mathcal{Q})$ be such that $\supp(\mathbb{Q}) \subseteq \Psi(\mathcal{A})$ for all $\mathbb{Q} \in \mathfrak{Q}$.  Suppose that, for all $\delta >0$, there exists some $\mathbb{Q}\in \mathfrak{Q}$ such that
        \begin{equation}
                \label{eq:dto0}
                \E_{q\sim \mathbb{Q}} \left[ \inf_{\mu\in  \Psi^{-1}(q)} \Dmap(\mu)[B] \right]=0
        \end{equation}
        and        \begin{equation}
                \label{eq:djkdjehjehj33}
                \mathbb{P}_{q\sim \mathbb{Q}} \left[ \sup_{\mu\in \Psi^{-1}(q),\, \Dmap(\mu)[B]>0}\Phi(\mu) > \sup_{\mu\in \mathcal{A}}\Phi(\mu) - \delta \right]>0 .
        \end{equation}
        Then
        \begin{equation}
                \label{eq:2qbisjhjycondddihjh}
                \mathcal{U}\big(\Psi^{-1}\mathfrak{Q}\big|B\big) =\mathcal{U}(\mathcal{A}) .
        \end{equation}
\end{thm}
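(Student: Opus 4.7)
The upper bound $\mathcal{U}(\Psi^{-1}\mathfrak{Q} \mid B) \leq \mathcal{U}(\mathcal{A})$ is immediate from \eqref{eq:upilpi} (any posterior expectation is a weighted average of values of $\Phi$ on $\mathcal{A}$), so the task is the reverse inequality. The plan is, for each $\delta > 0$, to exhibit a prior $\pi \in \Psi^{-1}\mathfrak{Q}$ with $(\pi \cdot \Dmap)[B] > 0$ whose posterior expectation of $\Phi$ given $B$ is at least $\mathcal{U}(\mathcal{A}) - 2\delta$, and then let $\delta \to 0$.

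Fix $\delta > 0$ and pick $\mathbb{Q} \in \mathfrak{Q}$ satisfying \eqref{eq:dto0} and \eqref{eq:djkdjehjehj33}. Nonnegativity of $\Dmap(\mu)[B]$ together with \eqref{eq:dto0} forces $\inf_{\mu \in \Psi^{-1}(q)} \Dmap(\mu)[B] = 0$ for $\mathbb{Q}$-a.e.\ $q$, while \eqref{eq:djkdjehjehj33} produces a measurable set $E \subseteq \mathcal{Q}$ with $\mathbb{Q}(E) > 0$ such that for every $q \in E$ some $\mu \in \Psi^{-1}(q)$ satisfies $\Dmap(\mu)[B] > 0$ and $\Phi(\mu) > \mathcal{U}(\mathcal{A}) - \delta$. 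The Suslin hypothesis on $\mathcal{A}$, combined with measurability of $\Psi$, $\Dmap(\cdot)[B]$ and $\Phi$, permits a Jankov--von Neumann measurable-selection argument producing, for a small auxiliary parameter $\eta > 0$, both a map $q \mapsto \mu^E_q \in \Psi^{-1}(q)$ on $E$ with $\Dmap(\mu^E_q)[B] > 0$ and $\Phi(\mu^E_q) > \mathcal{U}(\mathcal{A}) - \delta$, and a map $q \mapsto \mu^\eta_q \in \Psi^{-1}(q)$ on $E^c$ with $\Dmap(\mu^\eta_q)[B] < \eta$. Glue these into a single measurable selection $s_\eta \colon \mathcal{Q} \to \mathcal{A}$ and set $\pi_\eta := (s_\eta)_* \mathbb{Q}$; then $\Psi_* \pi_\eta = \mathbb{Q} \in \mathfrak{Q}$, so $\pi_\eta \in \Psi^{-1}\mathfrak{Q}$, and positivity on $E$ guarantees $(\pi_\eta \cdot \Dmap)[B] > 0$.

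Substituting $\pi_\eta$ into the defining ratio of the posterior expectation yields
\[
\E_{\pi_\eta \odot \Dmap}[\Phi \mid B] = \frac{\int_E \Phi(\mu^E_q)\, \Dmap(\mu^E_q)[B]\, d\mathbb{Q}(q) + \int_{E^c} \Phi(\mu^\eta_q)\, \Dmap(\mu^\eta_q)[B]\, d\mathbb{Q}(q)}{\int_E \Dmap(\mu^E_q)[B]\, d\mathbb{Q}(q) + \int_{E^c} \Dmap(\mu^\eta_q)[B]\, d\mathbb{Q}(q)}.
\]
Setting $C_E := \int_E \Dmap(\mu^E_q)[B]\, d\mathbb{Q}(q) > 0$ and using $\Dmap(\mu^\eta_q)[B] < \eta$ on $E^c$ together with a lower bound $\Phi \geq -M$ (automatic in the paper's setting where $\Phi$ represents a bounded quantity of interest), the numerator is at least $(\mathcal{U}(\mathcal{A}) - \delta) C_E - M \eta$ and the denominator is at most $C_E + \eta$. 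Sending $\eta \to 0$ yields $\mathcal{U}(\Psi^{-1}\mathfrak{Q} \mid B) \geq \mathcal{U}(\mathcal{A}) - \delta$, and then $\delta \to 0$ concludes.

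The main obstacle is the measurable-selection step: one must check that the relevant subsets of $\mathcal{A}$ are Suslin, that their $\Psi$-images $E$ and $E^c$ are universally measurable in $\mathcal{Q}$, and that the Jankov--von Neumann selections glue to a map whose push-forward of $\mathbb{Q}$ is a bona fide (Borel or universally measurable) probability measure on $\mathcal{A}$, passing to the $\mathbb{Q}$-completion of the Borel $\sigma$-algebra if necessary. A subsidiary but conceptually relevant point is the treatment of the contribution of the numerator over $E^c$, which either uses an automatic lower bound on $\Phi$ or a truncation argument.
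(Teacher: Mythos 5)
Your argument is correct in outline, but it follows a genuinely different route from what the paper does. The paper does not prove Theorem~\ref{thm:shiva0} at all: it is quoted verbatim from \cite[Thm.~4.13]{BayesOUQ}, and the only proof in the paper is of the quantitative refinement, Theorem~\ref{thm_shiva}, from which \ref{thm:shiva0} is recovered as the $\tau=0$, $h\equiv 0$, $\e=0$ limiting case. That proof proceeds by \emph{linearizing} the objective: it sets $\lambda:=\mathcal{U}(\mathcal{A})-2\delta'$, considers $\theta(q):=\sup_{\mu\in\Psi^{-1}(q)}(\Phi(\mu)-\lambda)\Dmap(\mu)[B]$, shows $\E_{\mathbb{Q}}[\theta]>0$ by splitting $\mathcal{Q}$ into the favourable event and the rest, and then invokes the nested reduction theorem \cite[Thm.~4.11]{BayesOUQ} as a black box to turn positivity of $\sup_{\mathbb{Q}}\E_{\mathbb{Q}}[\theta]$ into $\mathcal{U}(\Psi^{-1}\mathfrak{Q}\odot_B\Dmaps)>\lambda$. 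You instead build the witness priors by hand: a measurable selection $s_\eta$ picking high-$\Phi$, positive-likelihood fibers on $E$ and near-zero-likelihood fibers on $E^c$, push $\mathbb{Q}$ forward, and estimate the posterior \emph{ratio} directly, sending $\eta\to 0$ then $\delta\to 0$. The underlying mechanism (Jankov--von Neumann selection on a Suslin domain, partition of $\mathcal{Q}$ by the likelihood behaviour of the fiber) is the same; what differs is that the paper's linearization bypasses the ratio analysis entirely (no need to track the denominator or to assume $\Phi$ bounded below separately, since $|\theta|\le\mathcal{U}(\mathcal{A})-\mathcal{L}(\mathcal{A})$ is automatic), while your direct construction is self-contained modulo the selection theorem and does not rely on \cite[Thm.~4.11]{BayesOUQ}. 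Your ratio estimate is sound --- as $\eta\to 0$ both numerator and denominator converge to their $E$-restricted values, so the ratio tends to a weighted average of $\Phi(\mu^E_q)$, which exceeds $\mathcal{U}(\mathcal{A})-\delta$ --- but be aware that for fixed $\eta$ the displayed lower bound $\bigl((\mathcal{U}(\mathcal{A})-\delta)C_E-M\eta\bigr)/(C_E+\eta)$ requires the numerator to be nonnegative before dividing by the \emph{upper} bound on the denominator; the limiting argument avoids this, but the intermediate inequality as written should be stated more carefully. The technical gap you flag (gluing the two selections into a single universally measurable map whose push-forward is a Borel measure on the Suslin space $\mathcal{A}$, and handling the $\mathbb{Q}$-null set outside $\supp(\mathbb{Q})$) is real and is precisely the content the paper delegates to \cite[Thm.~4.11, Lem.~3.10]{BayesOUQ}.
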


The following generalization
of the  Theorem \ref{thm:shiva0} (\cite[Thm.~4.13]{BayesOUQ})  allows a weakening of its assumptions while
approximately obtaining its conclusion.
We require, as in \cite{BayesOUQ}, the data space $\Dspace$ to be metrizable. We
 select a consistent metric, and  for a
data point $d \in \mathcal{D}$,
let $B_{\d}(d)$ denote the open ball of metric radius $\d$ about $d$. To keep the notation simple we omit
reference to the base point $d$ and denote this family of open balls about $d$ by
$\{B_{\d}, \d \geq 0\}$, where $B_{0}=\emptyset$.
\begin{thm}
\label{thm_shiva}
For a  metrizable topological space $\mathcal{X}$, consider a topologized
 subset $\mathcal{G}\subset\mathcal{F}(\mathcal{X})$ and the space of probability measures $\mathcal{M}(\mathcal{X})$ equipped with the weak star topology.
Let $\mathcal{A} \subset \mathcal{G}\times \mathcal{M}(\mathcal{X})$ be Suslin, $\mathcal{Q}$ separable metrizable, and
$\Psi:\mathcal{A} \rightarrow \mathcal{Q}$ Borel measurable. Moreover, let
$\mathfrak{Q} \subset \mathcal{M}(\mathcal{Q})$ be such that
$\supp \, \mathbb{Q} \subset \Psi(\mathcal{A}), \mathbb{Q} \in \mathfrak{Q}$, and let
$\tau \geq 0$. Suppose there exists some
 $\mathbb{Q}\in \mathfrak{Q},\,\Dmap\in \Dmaps$ and  a continuous monotonically increasing function
$h:\R^{+} \rightarrow \R$  with $h(0)=0$ such that
\begin{equation}
\label{eq:dto0app}
 \mathbb{Q}\Bigl(\bigl\{q: \inf_{(f,\mu)\in  \Psi^{-1}(q)} \Dmap(f,\mu)[B_{\d}]\leq \tau\big\}\Bigr) \geq 1-h(\d), \quad \d >0\, .
\end{equation}
Fix $\d>0$.  If  $\e \geq 0$, $\e'>0$ and $\d'>0$ are three  real numbers  such that
\begin{equation}\label{eq:djkdjehjehj33app}
\begin{split}
\mathbb{Q}\Bigl( \Bigl\{ q: \sup_{(f,\mu)\in \Psi^{-1}(q),\, \Dmap(f,\mu)[B_{\d}]> \e}\Phi(f,\mu) >\sup_{(f,\mu)\in \mathcal{A}}\Phi(f,\mu)    -\delta' \Bigr\}\Bigr)\geq \e'
\end{split}
\end{equation}
and
\begin{equation}
\label{eq_solve}
h(\d)+\tau \leq  \frac{\e\d'\e'}{\mathcal{U}(\mathcal{A})-\mathcal{L}(\mathcal{A})}\, ,
\end{equation}
then we have
\begin{equation}
		\label{eq:2qbisjhjycondddihjh}
\mathcal{U}(\mathcal{A}) -2\d' \leq \mathcal{U}\big(\Psi^{-1}(\mathfrak{Q})\odot_{B_{\d}}\Dmaps\big)
\leq \mathcal{U}(\mathcal{A})\, .
\end{equation}
\end{thm}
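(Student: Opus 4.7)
The upper bound $\mathcal{U}\big(\Psi^{-1}(\mathfrak{Q})\odot_{B_\d}\Dmaps\big)\leq \mathcal{U}(\mathcal{A})$ is immediate, since every conditional expectation of $\Phi$ over a prior supported in $\mathcal{A}$ is at most $\sup_{\mathcal{A}}\Phi=\mathcal{U}(\mathcal{A})$.  The substance of the theorem is therefore the lower bound, and my plan is to exhibit an explicit pair $(\pi,\Dmap)\in \Psi^{-1}(\mathfrak{Q})\times \Dmaps$ with $\pi\cdot\Dmap[B_\d]>0$ and $\E_{\pi\odot\Dmap}[\Phi\mid B_\d]\geq \mathcal{U}(\mathcal{A})-2\d'$, obtained by pushing $\mathbb{Q}$ forward through a measurable section $s\colon \mathcal{Q}\to \mathcal{A}$ of $\Psi$ that acts as a $\Phi$-maximiser on part of $\mathcal{Q}$ and as a data-minimiser on the rest.

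Concretely, I would partition $\mathcal{Q}$ into three Borel pieces.  Let $Q_1$ be the set appearing in \eqref{eq:djkdjehjehj33app}, of $\mathbb{Q}$-mass at least $\e'$, on whose $\Psi$-fibres one finds points with $\Phi$-value exceeding $\mathcal{U}(\mathcal{A})-\d'$ and data-mass exceeding $\e$; let $Q_3$ be the complement of the set appearing in \eqref{eq:dto0app}, of $\mathbb{Q}$-mass at most $h(\d)$; and let $Q_2:=\mathcal{Q}\setminus(Q_1\cup Q_3)$, on whose fibres some point has data-mass at most $\tau$.  Invoking a measurable selection theorem in the Suslin setting (as already used in the proof of Theorem~\ref{thm:shiva0}) for the multifunctions $q\mapsto \{(f,\mu)\in \Psi^{-1}(q): \Phi(f,\mu)>\mathcal{U}(\mathcal{A})-\d',\, \Dmap(f,\mu)[B_\d]>\e\}$ on $Q_1$ and $q\mapsto \{(f,\mu)\in \Psi^{-1}(q): \Dmap(f,\mu)[B_\d]\leq \tau\}$ on $Q_2$ yields a measurable section $s\colon \mathcal{Q}\to \mathcal{A}$ of $\Psi$ witnessing these properties piecewise.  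The push-forward $\pi:=s_*\mathbb{Q}$ then lies in $\Psi^{-1}(\mathfrak{Q})$ because $\Psi_*\pi=\mathbb{Q}\in \mathfrak{Q}$.

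With $(\pi,\Dmap)$ in hand, I would carry out a ratio estimate.  Setting $\tilde\Phi:=\Phi-\mathcal{L}(\mathcal{A})\geq 0$, $M:=\mathcal{U}(\mathcal{A})-\mathcal{L}(\mathcal{A})$, and $a:=\int_{Q_1}\Dmap(s(q))[B_\d]\,d\mathbb{Q}(q)$, the conditional expectation becomes
\begin{equation*}
\E_{\pi\odot\Dmap}[\Phi\mid B_\d]-\mathcal{L}(\mathcal{A})
=\frac{\int \tilde\Phi(s(q))\,\Dmap(s(q))[B_\d]\,d\mathbb{Q}(q)}{\int \Dmap(s(q))[B_\d]\,d\mathbb{Q}(q)}.
\end{equation*}
Discarding the nonnegative contributions from $Q_2\cup Q_3$ in the numerator and using $\tilde\Phi(s(q))\geq M-\d'$ on $Q_1$ gives a numerator bound of $(M-\d')a$, while bounding the denominator above by $a+\tau\cdot\mathbb{Q}(Q_2)+1\cdot\mathbb{Q}(Q_3)\leq a+\tau+h(\d)$ and using $a\geq \e\e'$ together with \eqref{eq_solve} (which yields $(\tau+h(\d))/a\leq \d'/M$) produces
\begin{equation*}
\E_{\pi\odot\Dmap}[\Phi\mid B_\d]-\mathcal{L}(\mathcal{A})\;\geq\;(M-\d')\cdot\frac{a}{a+\tau+h(\d)}\;\geq\;(M-\d')\Bigl(1-\frac{\d'}{M}\Bigr)\;\geq\; M-2\d',
\end{equation*}
the desired lower bound; in particular $a\geq \e\e'>0$ guarantees $\pi\cdot\Dmap[B_\d]>0$, so $(\pi,\Dmap)$ is a legitimate element of $\Psi^{-1}(\mathfrak{Q})\odot_{B_\d}\Dmaps$.

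The main obstacle is the measurable selection step, where one must simultaneously produce measurable $\Phi$-maximising and data-minimising choices of fibre points.  The Suslin hypothesis on $\mathcal{A}$ together with Borel measurability of $\Psi$, $\Phi$, and $(f,\mu)\mapsto \Dmap(f,\mu)[B_\d]$ ensures that the relevant multifunctions have analytic graphs, so a Jankov--von Neumann type selection applies; modulo small adjustments, this is exactly the selection machinery supporting Theorem~\ref{thm:shiva0}, and once it is in place everything reduces to the routine bookkeeping of the ratio $N/D$ driven by \eqref{eq:dto0app}, \eqref{eq:djkdjehjehj33app}, and the quantitative tradeoff \eqref{eq_solve}.
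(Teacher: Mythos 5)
Your argument is correct in its essentials but takes a genuinely different route from the paper's. The paper does not construct an explicit prior: it sets $\lambda:=\mathcal{U}(\mathcal{A})-2\d'$, defines $\theta(q):=\sup_{(f,\mu)\in\Psi^{-1}(q)}(\Phi(f,\mu)-\lambda)\Dmap(f,\mu)[B_\d]$, bounds $\E_{\mathbb{Q}}[\theta]$ from below by splitting $\mathcal{Q}$ into $\{\theta>\e\d'\}$, $\{-\tau'\leq\theta\leq 0\}$, $\{\theta<-\tau'\}$ (with $\tau':=\tau(\mathcal{U}(\mathcal{A})-\mathcal{L}(\mathcal{A}))$) to conclude $\E_{\mathbb{Q}}[\theta]>0$, and then invokes the nested reduction theorem \cite[Thm.~4.11]{BayesOUQ} to convert $\E_{\mathbb{Q}}[\theta]>0$ into $\mathcal{U}(\Psi^{-1}(\mathfrak{Q})\odot_{B_\d}\Dmaps)\geq\lambda$. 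You instead realize the fiberwise sup directly: you build a measurable section $s$ of $\Psi$, push $\mathbb{Q}$ forward to $\pi=s_*\mathbb{Q}\in\Psi^{-1}(\mathfrak{Q})$, and estimate the ratio $\E_{\pi\odot\Dmap}[\Phi\mid B_\d]$ explicitly. Your partition $Q_1,Q_2,Q_3$ matches the paper's three-way split of the $\theta$-level sets, and your estimate $(M-\d')\frac{a}{a+\tau+h(\d)}\geq(M-\d')(1-\d'/M)\geq M-2\d'$ reorganizes the same inequalities. What the paper buys by citing \cite[Thm.~4.11]{BayesOUQ} is that the measurable-selection bookkeeping is handled once and for all there; what you buy is a self-contained and constructive proof that exhibits an explicit near-optimal prior. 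One small technical point you should fix: on $Q_2$ the infimum $\inf_{(f,\mu)\in\Psi^{-1}(q)}\Dmap(f,\mu)[B_\d]\leq\tau$ need not be attained, so the multifunction $q\mapsto\{(f,\mu)\in\Psi^{-1}(q):\Dmap(f,\mu)[B_\d]\leq\tau\}$ could have empty values; you should select points with $\Dmap(f,\mu)[B_\d]\leq\tau+\eta$ for small $\eta>0$ and let $\eta\to 0$ at the end (or note that the supremum defining $\mathcal{U}(\cdots)$ absorbs the vanishing slack). You should also note that $a>0$ holds even when $\e=0$, since $\Dmap(s(q))[B_\d]>\e=0$ pointwise on the positive-$\mathbb{Q}$-measure set $Q_1$; the bound $a\geq\e\e'=0$ alone is not enough to justify $\pi\cdot\Dmap[B_\d]>0$ in that degenerate case.
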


If, for $\tau=0$, there exists a $\d^{*}>0$ such that
for all $\d'>0$ there exists some  $\mathbb{Q}\in \mathfrak{Q},\,\Dmap\in \Dmaps$
which satisfies \eqref{eq:dto0app} with a function $h$ such that
 $h(\d)=0,\d \leq \d^{*}$, and which satisfies
 \eqref{eq:djkdjehjehj33app}  with
 $\e=0$,
 then we recover the conditions and the  assertion of the Brittleness Theorem \cite[Thm.~4.13]{BayesOUQ}
for $B_{\d}, \d \leq \d^{*}$.

\begin{rmk}
The proof of Theorem \ref{thm_shiva} also leads to the following result. For a  metrizable topological space $\mathcal{X}$, consider a topologized
 subset $\mathcal{G}\subset\mathcal{F}(\mathcal{X})$ and the space of probability measures $\mathcal{M}(\mathcal{X})$ equipped with the weak star topology.
Let $\mathcal{A} \subset \mathcal{G}\times \mathcal{M}(\mathcal{X})$ be Suslin, $\mathcal{Q}$ separable metrizable, and
$\Psi:\mathcal{A} \rightarrow \mathcal{Q}$ Borel measurable. Moreover, let
$\mathfrak{Q} \subset \mathcal{M}(\mathcal{Q})$ be such that
$\supp \, \mathbb{Q} \subset \Psi(\mathcal{A}), \mathbb{Q} \in \mathfrak{Q}$. It holds true that for $\delta>0$
\begin{equation}
		\label{eq:2qbisjhjycondddihjhtaustar}
\mathcal{U}(\mathcal{A}) - v(\d) \leq \mathcal{U}\big(\Psi^{-1}(\mathfrak{Q})\odot_{B_{\d}}\Dmaps\big)
\leq \mathcal{U}(\mathcal{A})\, .
\end{equation}
where the function $v$ is defined by
\begin{equation}
\label{eq_solvetaustar}
\begin{split}
v(\d):=& 2 \inf \Bigg\{\delta'>0\Bigg| \delta'\geq  \big(\mathcal{U}(\mathcal{A})-\mathcal{L}(\mathcal{A})\big) \inf_{\mathbb{Q}\in \mathfrak{Q},\,\Dmap\in \Dmaps,\e> 0,\tau\geq 0 } \\&\frac{1-\mathbb{Q}\Bigl(\bigl\{q: \inf_{(f,\mu)\in  \Psi^{-1}(q)} \Dmap(f,\mu)[B_{\d}]\leq \tau\big\}\Bigr)+\tau}{\e \mathbb{Q}\Bigl( \Bigl\{ q: \sup_{(f,\mu)\in \Psi^{-1}(q),\, \Dmap(f,\mu)[B_{\d}]> \e}\Phi(f,\mu) >\sup_{(f,\mu)\in \mathcal{A}}\Phi(f,\mu)    -\delta' \Bigr\}\Bigr)}\Bigg\}\,
\end{split}\end{equation}
for $\d >0$.
\end{rmk}

\begin{rmk}
This brittleness is not a consequence of a  lack of compactness of the admissible set. Indeed,
in the following section, the primary space of measures  $\mathcal{M}(I)$ is compact in the weak topology, as is any closed moment subset, and  Theorem \ref{thm_shiva_singlesample} describes a brittleness result.
\end{rmk}
\begin{rmk}
It is true that this brittleness does not appear to be primarily due to the Bayesian methodology, but is valid more generally.
 See
Bahadur and Savage \cite{BahadurSavage} and Donoho \cite{Donoho} for similar results for statistical estimators, where it appears that the mechanism generating  the instability is analogous to that investigated here.
\end{rmk}

\subsection{Application to a revealing example}
To demonstrate that the assumptions of Theorem  \ref{thm_shiva} are mild, we now
use it to  extend the Brittleness result of \cite[Ex.~4.16]{BayesOUQ} to a simple but informative example.
Here one is interested in estimating the mean of a random variable $X$ with unknown distribution on the unit interval $I:=[0,1]$.
Since our quantity of interest is $\E_{\mu^\dagger}[X]$, where $\mu^\dagger$ is an unknown distribution on $I$,
 in the notations of Section \ref{sec:ouqwithpriors}, we have  $\mathcal{X}:=I$ (since $X$ is a random variable on $I$), $\mathcal{G}$ consists only
of the identity function (this example does not involve unknown functions of $X$), $\mathcal{A}:= \mathcal{M}(I)$ (the set of possible/admissible candidates for $\mu^\dagger$ is the set of all probability distributions $\mu$ on $I$), $\Phi(\mu):=\E_{t \sim \mu}[t]$ (our quantity of interest is the mean of the random variable $X$), $\mathcal{Q}:=\R^{n}$ and the map
 $\Psi:\mathcal{M}(I) \rightarrow \R^{n}$ is the map to the truncated Hausdorff moments
$\Psi(\mu) := \bigl(\E_{t \sim \mu}[t^{i}]\bigr)_{i=1,..,n}$  (our set of prior distributions is defined  by constraining the distribution of the
first $n$ Hausdorff moments in $\R^n$, for some fixed  $n$). Furthermore $\mathbb{Q}$ is the uniform
Borel measure on $\R^{n}$ restricted to the Hausdorff moment space $M^{n}:=\Psi(\mathcal{M}(I))$ and then normalized to be a probability measure, that is $\Pi \subset \mathcal{M}\bigl(\mathcal{M}(I)\bigr)$ is the set of prior distributions on $\mathcal{A}=\mathcal{M}(I)$ such that $\Psi \mu\in \mathcal{M}(M^{n})$ is uniformly  distributed on the the space
$M^{n}$ of first $n$ Hausdorff moments.

The Brittleness Theorem \ref{thm_shiva} implies (see \cite[Ex.~4.16]{BayesOUQ}) that if we observe (condition on) $k$  independent samples from $X$, i.e.  $\Dspace:=I^k$ and $\Dmap^{k}\mu:=\mu\otimes \cdots \otimes \mu$ ($k$-fold tensorization) and  $B$ is the $k$-fold product of small enough balls centered on the data then $\mathcal{L}(\Pi\odot_B\Dmaps) \approx \mathcal{L}(\mathcal{A})$ and $\mathcal{U}(\Pi\odot_B\Dmaps) \approx \mathcal{U}(\mathcal{A})$. In other words, although the set of prior values of $\E_{\mu}[X]$ is  the single point $\{\frac{1}{2}\}$, the optimal bounds on the posterior values of $\E_{\mu}[X]$ are zero and one irrespective of the number $n$ of constraints on marginals and the number $k$ of observed samples if the data is observed with sufficient precision.

The following theorem provides a rigorous and quantitative statement and proof of this implication for $k=1$. Although, for the sake of conciseness and clarity our analysis is provided in the $k=1$ case, it generalizes to the situation where $k$ is arbitrary. Indeed, although counterintuitive, one can show that brittleness for the single sample case is {\em more} difficult to
 obtain
 than for multiple samples. Since our main objective here is to unwrap and scrutinize the mechanism causing brittleness in Bayesian inference, we therefore chose to keep the presentation and our example as clear, concise, and simple as possible to illustrate the generic and pervasive nature of this brittleness.

Therefore, we will now  (i) consider the case of a single data point, i.e., $k=1$, $\Dspace:=I$,  and $\Dmap^{1}\mu:=\mu$ (ii) use  Theorem \ref{thm_shiva} to provide quantitative bounds on $\mathcal{U}(\Pi\odot_B\Dmaps)$ as a function $n$ of the number of marginal constraints defining the set of priors (iii) scrutinize the brittleness causing mechanism through the proof of the following theorem.

\begin{thm}
\label{thm_shiva_singlesample}
Let $\mathcal{A}:= \mathcal{M}(I)$, $\Phi(\mu)=\E_{t\sim \mu}[t]$, $\mathcal{D}=I$,  and
$\Psi:\mathcal{M}(I) \rightarrow \R^{n}$
denote the map to the truncated Hausdorff moments $\Psi(\mu)= \bigl(\E_{t \sim \mu}[t^{i}]\bigr)_{i=1,..,n}$.
Furthermore, let
$\mathbb{Q}$ denote the uniform
Borel measure on $\R^{n}$ restricted to the Hausdorff moment space $M^{n}:=\Psi(\mathcal{M}(I))$ and then normalized to be a probability measure.
Suppose that $\mathbb{Q} \in \mathfrak{Q}$ and $\Dmap^{1} \in \Dmaps$.
Then for $\delta>0$
we have
\begin{equation}
\label{shiva_single_sample_result}
1-4 e\big(\frac{2n \d}{e}\big)^\frac{1}{2n+1} \leq \mathcal{U}\big(\Psi^{-1}(\mathfrak{Q})\odot_{B_{\d}}\Dmaps\big) \leq 1
\end{equation}
\end{thm}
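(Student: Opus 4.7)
The upper bound is immediate: $\Phi(\mu)=\E_\mu[t]\le 1$ for every $\mu\in\mathcal{A}=\mathcal{M}(I)$, so $\mathcal{U}(\Psi^{-1}(\mathfrak{Q})\odot_{B_\delta}\Dmaps) \le 1$. For the lower bound the plan is to apply Theorem~\ref{thm_shiva} with $\tau=0$, and then to minimize the loss $2\delta'$ appearing in \eqref{eq:2qbisjhjycondddihjh} over admissible $(\epsilon,\epsilon',\delta')$ subject to \eqref{eq:dto0app}, \eqref{eq:djkdjehjehj33app}, and \eqref{eq_solve}, keeping in mind that $\mathcal{U}(\mathcal{A})-\mathcal{L}(\mathcal{A}) = 1$ (the extremes $\delta_1,\delta_0$ lie in $\mathcal{A}$).

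To verify \eqref{eq:dto0app} I will exploit the lower principal representation $\mu_q^-$ supplied by Theorem~\ref{thm_principal}: for each $q\in\mathrm{Int}(M^n)$ it is atomic with at most $\lceil(n+1)/2\rceil$ support points in $[0,1)$, and it depends algebraically on $q$. When $\supp(\mu_q^-)\cap B_\delta(d)=\emptyset$ one has $\Dmap^1(\mu_q^-)[B_\delta]=0$, so $\tau=0$ is fine at that $q$. The bad set
\[ E_\delta := \{q\in\mathrm{Int}(M^n) : \supp(\mu_q^-)\cap B_\delta(d)\neq\emptyset\} \]
is a finite union of $O(\delta)$-thickenings of codimension-$1$ algebraic strata in $\R^n$. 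Integrating the Vandermonde-type Jacobian of the parametric moment map $(\lambda_j,t_j) \mapsto \Psi\bigl(\sum_j\lambda_j\delta_{t_j}\bigr)$ across the transverse direction will give $\mathbb{Q}(E_\delta) \le C\, n\delta$, so I may take $h(\delta) := Cn\delta$.

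To verify \eqref{eq:djkdjehjehj33app} I note that if $q\in M^n$ satisfies $q_1>1-\delta'$ and $q - \epsilon(d,d^2,\ldots,d^n)\in (1-\epsilon)M^n$, then the mixture $\mu := (1-\epsilon)\nu + \epsilon\delta_d$ (for any $\nu$ whose moments match those of $\bigl(q-\epsilon(d,d^2,\ldots,d^n)\bigr)/(1-\epsilon)$) lies in $\Psi^{-1}(q)$, has $\mu[B_\delta]\ge\epsilon$, and satisfies $\Phi(\mu)=q_1>1-\delta'$. It therefore suffices to lower-bound the $\mathbb{Q}$-measure of
\[ S_{\epsilon,\delta'} := \bigl\{q\in M^n : q_1 > 1-\delta',\ q-\epsilon(d,d^2,\ldots,d^n) \in (1-\epsilon)M^n\bigr\}. \]
A local analysis of $M^n$ near its extreme point $(1,\ldots,1)$ (image of $\delta_1$), combined with the Karlin--Shapley/Selberg-type volume computations developed in later sections, will produce a bound of the form $\mathbb{Q}(S_{\epsilon,\delta'}) \ge c_n (\delta')^{2n-1}$ provided $\epsilon \le \kappa\delta'$ for a suitable $\kappa>0$.

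Taking $\tau=0$, $h(\delta)=Cn\delta$, $\epsilon=\kappa\delta'$, and $\epsilon'=c_n(\delta')^{2n-1}$, condition \eqref{eq_solve} becomes $Cn\delta \le \kappa c_n (\delta')^{2n+1}$, hence $\delta' \ge (Cn\delta/(\kappa c_n))^{1/(2n+1)}$, and Theorem~\ref{thm_shiva} yields $\mathcal{U}(\Psi^{-1}(\mathfrak{Q})\odot_{B_\delta}\Dmaps) \ge 1 - 2\delta'$; tracking the explicit constants $C,\kappa,c_n$ through the Selberg evaluations of the later sections will produce the stated $1 - 4e(2n\delta/e)^{1/(2n+1)}$. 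The hardest step will be securing the sharp $(\delta')^{2n-1}$ rate in the lower bound on $\mathbb{Q}(S_{\epsilon,\delta'})$ with a usable constant $c_n$: the exponent $2n-1$ (rather than $2n$) is precisely what forces the $2n+1$ in the denominator of the final rate, and its derivation rests on the explicit canonical-representation geometry of $M^n$ near $(1,\ldots,1)$ together with quantitative control of the admissible interior margin along the direction $(d,d^2,\ldots,d^n)$ as $d$ varies --- exactly the content worked out in the subsequent sections.
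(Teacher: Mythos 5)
The high-level skeleton matches the paper's: the upper bound is immediate, and the lower bound is obtained by instantiating Theorem~\ref{thm_shiva} with $\tau=0$ and then solving \eqref{eq_solve} for $\delta'$. Beyond that skeleton, however, your route diverges from the paper's and your central intermediate claim is mis-stated. The paper reduces the two probability conditions of Theorem~\ref{thm_shiva} to three explicit volume inequalities for the uniform measure on $M^n$ --- the Mass Supremum Equality (Lemma~\ref{lem_mass_sup}), the Mass Infimum Inequality (Lemma~\ref{lem_mass_inf}), and the Mass of First Moment Inequality (Lemma~\ref{massfirstmoment}) --- each proved separately using the Markov--Kre\u{\i}n representations and Selberg integrals of Sections~\ref{sec_volumeinequalities}--\ref{sec_IntegralGeometry}. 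Your proposal gestures at these facts but neither states nor proves them; in particular your asserted $h(\delta)=Cn\delta$ hides an $n$-exponential factor, since the paper's bound is $h(\delta)=\delta(2e)^{2n}$.

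The more serious gap is the claimed rate $\mathbb{Q}(S_{\epsilon,\delta'}) \ge c_n(\delta')^{2n-1}$ with $\epsilon=\kappa\delta'$. This is not derived, and it is not what your own proposed geometry would produce. The set of moments admitting a representing measure with point mass $\ge\epsilon$ at $d$ is exactly $\epsilon(d,\dots,d^n)+(1-\epsilon)M^n$ (this is the content of Lemma~\ref{lem_mass_sup}); intersecting it with $\{q_1>1-\delta'\}$ and changing variables gives a cap of $M^n$ of depth $\delta''=(\delta'-\epsilon(1-d))/(1-\epsilon)$, whose $\mathbb{Q}$-measure is $\ge(\delta'')^n$ by Lemma~\ref{massfirstmoment}. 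With $\epsilon=\kappa\delta'$ and $\kappa$ bounded away from $1/(1-d)$, this yields $\mathbb{Q}(S_{\epsilon,\delta'})\gtrsim(\delta')^n$, not $(\delta')^{2n-1}$. The exponent $2n-1$ appears reverse-engineered from the theorem's $1/(2n+1)$ rather than derived; indeed, your argument carried out honestly would give $\epsilon\delta'\epsilon'\asymp(\delta')^{n+2}$ and a strictly stronger conclusion with exponent $1/(n+2)$. The paper does something different and simpler: it does not attempt a sharp geometric intersection at all, but takes $\epsilon=(\delta')^n/(2n)$ and uses the union bound $\mathbb{Q}(S_{\epsilon,\delta}\cap FM_{\delta'})\ge \mathbb{Q}(S_{\epsilon,\delta})+\mathbb{Q}(FM_{\delta'})-1\ge(1-\epsilon)^n+(\delta')^n-1\ge(\delta')^n-n\epsilon=(\delta')^n/2$, so that $\epsilon\delta'\epsilon'=(\delta')^{2n+1}/(4n)$ and \eqref{eq_solve} solves to $\delta\le\tfrac{1}{4n}(\delta')^{2n+1}(2e)^{-2n}$, giving the stated constant exactly. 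So: your plan replaces the paper's union-bound-with-tiny-$\epsilon$ by a direct geometric estimate with $\epsilon\propto\delta'$, which is a legitimately different (and in fact potentially sharper) tactic, but your asserted $(\delta')^{2n-1}$ rate is neither what the paper uses nor what your geometry gives, and none of the underlying Selberg-volume lemmas are actually established.
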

\begin{rmk}
Alternatively, Theorem \ref{thm_shiva_singlesample} asserts that for positive $\d,\d'$  satisfying
\[ \d \leq \frac{1}{4n} \bigl(\d'\bigr)^{2n+1}\bigl(2e\bigr)^{-2n}\]
we have
\begin{equation}
\label{shiva_single_sample_resultold}
1-2\d' \leq \mathcal{U}\big(\Psi^{-1}(\mathfrak{Q})\odot_{B_{\d}}\Dmaps\big) \leq 1 \, .
\end{equation}
\end{rmk}

\section{Volume Inequalities on the Hausdorff Moment Space}
\label{sec_volumeinequalities}
Karlin and Shapley
  \cite[Thm.~15.2]{KarlinShapley} (see also  \cite[Thm.~6.2]{KarlinStudden:1966})  computed the volume
 of the space of truncated Hausdorff moments $M^{n}$ of probability measures on the
unit interval to be
\begin{equation}
\label{karlinshapley}
 Vol\bigl(M^{n}\bigr)=\prod_{k=1}^{n}{\frac{\Gamma(k)\Gamma(k)}{\Gamma(2k)}}\, ,
\end{equation}
where $\Gamma$ is the Gamma function. To accomplish this, they used a Markov representation of truncated moment points, as described in
 Kre\u{\i}n \cite{Krein} (see also  \cite[Ch.~II]{KarlinStudden:1966}), combined with the change of variables formula, followed by the evaluation of a Selberg integral.

Here we will refine their analysis to obtain
 volume {\em inequalities} on the Hausdorff moment
space which are used in the application of the Brittleness Theorem \ref{thm_shiva}
to the proof of the  Brittleness Theorem \ref{thm_shiva_singlesample}.
Of the two main results, it is interesting to note
that the Mass Supremum Equality uses  the canonical representation of  moment points combined with Markov's
Maximal Mass Theorem  \cite[Thm.~2.1]{Krein} (see also \cite[Thm.~4.1]{KarlinStudden:1966}) to change the
``Inequality'' to ``Equality'', whereas
 the
Mass Infimum Inequality instead  uses the principal representation, as in Karlin and Shapley's proof of
the volume formula
\eqref{karlinshapley}. All this terminology will be defined in the following Section \ref{sec_IntegralGeometry} and comes from
Karlin and Studden \cite{KarlinStudden:1966}. This section will simply state the volume inequalities
 that we need for Theorem
\ref{thm_shiva_singlesample}.

To proceed, let us now fix terminology.
Let $I:=[0,1]$, and let $\mathcal{P}(I)$ be the set of Borel measures on $I$ and
 $\mathcal{M}(I)\subset \mathcal{P}(I)$ be the set of probability measures. Throughout we will assume the weak star topology for these measures. For the system of functions
\[  u_{i}(t):=t^{i}, t \in I, i=0,..,n\] the Hausdorff moments of a measure $\mu \in \mathcal{P}(I)$ is defined
as the vector $q \in \R^{n+1}$ with coordinates
$q_{i}=\E_{\mu}[u_{i}]=\E_{t \sim \mu}[t^{i}]$. It  is well known (see e.g.~\cite[Cor.~15.7]{AliprantisBorder:2006}) that the map
\[\Psi:\mathcal{P}(I)  \rightarrow \R^{n+1}\] defined by
$\Psi(\mu) := \bigl(\E_{t \sim \mu}[t^{i}], i=0,..,n\bigr)$ is affine and continuous.
Furthermore, let the Hausdorff moment space $\mathcal{M}^{n+1}\subset \R^{n+1}$ be the image $\mathcal{M}^{n+1}:=\Psi \mathcal{P}(I)$
 of the  measures, and let
$M^{n}$ defined by $\mathcal{M}^{n+1}=(1,M^{n})$ be  moments of a probability measures
 omitting the zero-th moment. Equivalently, let $P_{1}: \R \times \R^{n} \rightarrow \R^{n}$ denote the projection mapping $(x_0,x_1,\ldots,x_n)$ onto $(x_1,\ldots,x_n)$ and let $\Psi_{1}=:P_{1}\Psi$. Then
$M^{n}=\Psi_{1}\mathcal{M}(I)$. We will abuse notation by letting $\Psi$ also
denote the mapping $\Psi_{1}$ restricted to the first-to-$n$-th order  moments  of the probability measures
\[\Psi:\mathcal{M}(I)  \rightarrow M^{n} \subset \R^{n}\,,\]
and, for $q \in  \R^{n}$,  let
\[\Psi^{-1}q:=\bigl\{\mu \in \mathcal{M}(I): \Psi\mu=q\bigr\}\]
denote its set-valued inverse.

 It follows from continuity that
the moment set  $\mathcal{M}^{n+1}$ is a closed convex cone and  $M^{n}$  is a compact convex set.
Moreover, one can show that
\begin{equation}
\label{interior}
Int(\mathcal{M}^{n+1}) \cap (1,\R^{n})=(1, Int(M^{n})\bigr),
\end{equation}
see e.g.~\cite[Cor.~6.5.1]{Rockafellar}, so that a point $q$ is interior to $M^{n}$ if and only
if $(1,q)$ is interior to $\mathcal{M}^{n+1}$.
Let $Vol$ be the usual $n$-dimensional volume measure.  Then, since $M^{n}$ is convex, by  \cite[Lem.~1.8.1] {Bogachev_Gauss}
\begin{equation}
\label{boundarynull}
 Vol\bigl(Int(M^{n})\bigr)=Vol\bigl(M^{n}\bigr)\,  .
\end{equation}

Our first result is the Mass Supremum Equality.
\begin{lem}
\label{lem_mass_sup}
Let $t_{*} \in I$, $ 0 \leq \e \leq 1$,  and consider the set
$M^{n}_{\e} \subset M^{n}$ defined by
\[ M^{n}_{\e}:=\bigl\{q \in  M^{n}: \exists \mu \in\Psi^{-1}q:  \mu(\{t_{*}\})\geq \e.\bigr\}\]
Then we have
\[Vol\bigl(M^{n}_{\e}\bigr)=(1- \e)^{n}Vol\bigl(M^{n}\bigr)\, .
 \]
\end{lem}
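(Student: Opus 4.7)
The plan is to realize $M^{n}_{\e}$ as the image of $M^{n}$ under an affine contraction and then invoke the change-of-variables formula for Lebesgue volume. Let $v(t_{*}) := \Psi(\delta_{t_{*}}) = (t_{*},t_{*}^{2},\ldots,t_{*}^{n})$ and define the affine map $T_{\e}\colon \R^{n}\to \R^{n}$ by $T_{\e}(q') := \e\, v(t_{*}) + (1-\e)\, q'$. The differential of $T_{\e}$ is $(1-\e)\, I_{n}$, so $T_{\e}$ scales $n$-dimensional Lebesgue volume by exactly $(1-\e)^{n}$. The lemma will follow at once from the set equality $M^{n}_{\e} = T_{\e}(M^{n})$, which I would prove by a direct convex decomposition argument exploiting the linearity of $\Psi$.

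For the inclusion $T_{\e}(M^{n})\subseteq M^{n}_{\e}$, given $q'\in M^{n}$ I pick any $\mu' \in \Psi^{-1}(q')$ (nonempty by definition of $M^{n}$) and form the probability measure $\mu := \e\,\delta_{t_{*}} + (1-\e)\,\mu'$. Then $\mu(\{t_{*}\})\geq \e$ and by linearity of $\Psi$ one has $\Psi(\mu)=\e\, v(t_{*})+(1-\e)\, q' = T_{\e}(q')$, so $T_{\e}(q')\in M^{n}_{\e}$. For the reverse inclusion $M^{n}_{\e}\subseteq T_{\e}(M^{n})$, assume first $\e<1$ and let $q\in M^{n}_{\e}$ with witness $\mu$; the measure $\mu - \e\,\delta_{t_{*}}$ is nonnegative (for any Borel $A\subseteq I$ with $t_{*}\in A$ one has $\mu(A)\geq \mu(\{t_{*}\})\geq \e$) and has total mass $1-\e$, so $\mu' := (\mu - \e\,\delta_{t_{*}})/(1-\e)$ lies in $\mathcal{M}(I)$; linearity then yields $q = \e\, v(t_{*}) + (1-\e)\,\Psi(\mu') = T_{\e}(\Psi(\mu'))$ with $\Psi(\mu')\in M^{n}$. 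The corner case $\e = 1$ forces $\mu=\delta_{t_{*}}$, so $M^{n}_{1} = \{v(t_{*})\} = T_{1}(M^{n})$, and both sides of the claimed identity vanish.

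The main (and essentially only) delicate point is verifying that the residual $\mu'$ built in the second inclusion is a bona fide probability measure on $I$; this is exactly where the hypothesis $\mu(\{t_{*}\})\geq \e$ is essential, guaranteeing nonnegativity of $\mu - \e\,\delta_{t_{*}}$. The direct decomposition above bypasses any explicit appeal to Markov's Maximal Mass Theorem, although that theorem would be the natural tool if one instead wished to characterize $M^{n}_{\e}$ intrinsically via the sup $\sup_{\mu\in\Psi^{-1}(q)}\mu(\{t_{*}\})$ and certify its attainment by a canonical representation. Once the set equality $M^{n}_{\e}=T_{\e}(M^{n})$ is in hand, the volume equality $Vol(M^{n}_{\e}) = (1-\e)^{n}\,Vol(M^{n})$ is immediate from the affine change-of-variables formula.
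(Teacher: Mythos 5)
Your proof is correct and coincides with the paper's: both realize $M^{n}_{\e}$ as the image $\e\,\Psi\delta_{t_{*}}+(1-\e)M^{n}$ of $M^{n}$ under the affine contraction with determinant $(1-\e)^{n}$, via the decomposition $\mu=\e\,\delta_{t_{*}}+(1-\e)\mu'$. You are a bit more explicit about verifying that $\mu'=(\mu-\e\,\delta_{t_{*}})/(1-\e)$ is a probability measure and about the corner case $\e=1$, but the argument is the same one the paper attributes to G.\ Letac's simplification.
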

\begin{rmk}
Lemma \ref{lem_mass_sup} is valid for any system $u_{i}:I \rightarrow \R, i=1,..,n$ of moment
functions
which form a T-system per \cite{KarlinStudden:1966}.
\end{rmk}

The second is  the Mass Infimum Inequality.
\begin{lem}
\label{lem_mass_inf}
Let $t_{*}\in I$,  $\d >0$, and consider the set
$M^{n}_{\d} \subset M^{n}$ defined by
\[ M^{n}_{\d}:=\bigl\{q \in  M^{n}: \exists \mu \in\Psi^{-1}q:  \mu\bigl(B_{\d}(t_{*})\bigr)=0.\bigr\}\]
Then we have
\[Vol\bigl(M^{n}_{\d}\bigr) \geq \Bigl(1- \d (2e)^{2n} \Bigr) Vol\bigl(M^{n}\bigr) \, .\]
\end{lem}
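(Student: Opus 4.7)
The plan is to exploit the lower principal representation (Theorem \ref{thm_principal}) together with the Karlin-Shapley change-of-variables used to prove the volume formula \eqref{karlinshapley}, reducing Lemma \ref{lem_mass_inf} to a union-bound estimate of the marginal probabilities of a Selberg-type density on the parameter cell of principal representations.

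\textbf{Setup.} By Theorem \ref{thm_principal}, every $q \in Int(M^n)$ admits a unique lower principal representation $\mu_q = \sum_{j=1}^{m} \lambda_j \delta_{t_j}$ whose support lies in $[0,1)$ and has index $(n+1)/2$. If none of the $t_j$ lies in $B_\d(t_*)$, then $\mu_q(B_\d(t_*))=0$ and hence $q \in M^{n}_{\d}$. Combined with \eqref{boundarynull} this yields
\[
Vol(M^n) - Vol(M^{n}_{\d}) \;\leq\; Vol\bigl(\{q \in Int(M^n) : t_j \in B_\d(t_*) \text{ for some } j\}\bigr).
\]
Following Karlin-Shapley, the parametrization $(t_1,\ldots,t_m,\lambda_1,\ldots,\lambda_{m-1}) \mapsto \Psi(\mu_q)$ is a diffeomorphism onto a full-measure subset of $Int(M^n)$ with Jacobian
\[
|J(t,\lambda)| \;=\; \prod_{i<j}(t_j - t_i)^2 \cdot W(\lambda,t),
\]
where $W$ collects the weight and boundary factors; this is the substitution that turns $Vol(M^n)$ into a Selberg integral in the Karlin-Shapley derivation of \eqref{karlinshapley}.

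\textbf{Union bound and Selberg evaluation.} Write $p := |J|/Vol(M^n)$ for the induced probability density on the parameter cell $\Omega$. A union bound gives
\[
\frac{Vol(M^n) - Vol(M^{n}_{\d})}{Vol(M^n)} \;\leq\; \sum_{k=1}^{m} \PR_p\bigl[t_k \in B_\d(t_*)\bigr].
\]
Each summand is a ratio of a one-variable-restricted Selberg integral (with $t_k$ forced into $B_\d(t_*)\cap[0,1]$) to the full Selberg integral, and hence equals $2\d$ times an explicit ratio of Gamma functions. Summing over the $m \leq (n+1)/2 + 1$ positions, substituting $Vol(M^n) = \prod_{k=1}^n \Gamma(k)^2/\Gamma(2k)$ from \eqref{karlinshapley}, and applying the Stirling-type bound $\Gamma(2k)/\Gamma(k)^2 \leq (2e)^{2k}$ telescopes the resulting Gamma product into the claimed constant $(2e)^{2n}$.

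\textbf{Main obstacle.} The delicate step is bounding each marginal probability $\PR_p[t_k \in B_\d(t_*)]$ with a single-exponential constant. A naive bound $|J|\leq 1$ fails because the parameter-cell volume normalized by $Vol(M^n)$ is super-exponential in $n$; one must preserve the Vandermonde-squared concentration of $p$ by evaluating the restricted Selberg integral in closed form, so that enough Gamma factors cancel against $Vol(M^n)^{-1}$ to leave only the single exponential $(2e)^{2n}$ after Stirling.
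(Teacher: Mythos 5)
Your overall strategy matches the paper's: fix the lower principal representation, pull back the volume by the Karlin--Shapley change of variables, and apply a union bound over the positions $t_k$ that could fall into $B_{\d}(t_*)$. That is exactly the skeleton of the paper's argument.

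However, your execution of the key quantitative step has a genuine gap. You claim that each marginal probability $\PR_p[t_k \in B_\d(t_*)]$ ``equals $2\d$ times an explicit ratio of Gamma functions,'' obtained by evaluating the one-variable-restricted Selberg integral in closed form. That evaluation does not exist: the marginal density of a single coordinate under the Selberg weight $\prod_{j<k}(t_k-t_j)^4$ (or more generally $\prod t_j^{\alpha-1}(1-t_j)^{\beta-1}\Delta^{2\gamma}$) is \emph{not} of Beta or elementary form, and in particular restricting $t_k$ to a short interval $B_\d(t_*)\cap I$ does not produce a closed-form Gamma expression proportional to $\d$. You flag the ``naive bound $|J|\leq 1$'' as failing, which is true, but the alternative you propose (exact closed form of the restricted integral) is both unavailable and unnecessary. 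What the paper does instead is cruder and decisive: on the bad event $\{t_1\in B_\d(t_*)\}$ it \emph{drops} every Vandermonde factor involving $t_1$, giving $\Delta_m^4(t)\le \prod_{2\le j<k\le m}(t_k-t_j)^4 = \Delta_{m-1}^4(t_2,\ldots,t_m)$, so that $t_1$ decouples and its integral contributes exactly $Vol(B_\d\cap I)\le 2\d$, leaving a clean $S_{m-1}$ integral. This is the missing idea in your proposal.

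A secondary inaccuracy: the Karlin--Shapley volume formula \eqref{karlinshapley} never actually enters. Because the $\lambda$-integral factors out identically from both numerator and denominator, the ratio $\bigl(Vol(M^n)-Vol(M^n_\d)\bigr)/Vol(M^n)$ reduces directly to $2m\d\cdot S_{m-1}(\cdot,\cdot,2)/S_m(\cdot,\cdot,2)$, and it is this ratio of Selberg values that is then bounded by $(2e)^{2n}$ via binomial-coefficient (and Beta-function) estimates --- not a Stirling bound on $\Gamma(2k)/\Gamma(k)^2$ substituted into the volume formula.
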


The third is the Mass of First Moment Inequality.
\begin{lem}
\label{massfirstmoment}
Let  $ 0 \leq \d \leq \frac{1}{2}$. Then we have
\[\d^{n} \bigl(2e\bigr)^{n} \geq \frac{Vol\bigl( q \in M^{n}: q_{1} \in [1-\d,1]\bigr)}{Vol(M^{n})} \geq \d^{n}\, .  \]
\end{lem}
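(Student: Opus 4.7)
The plan is to prove the two bounds separately.

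For the lower bound $\d^n$, I apply Lemma \ref{lem_mass_sup} with $t_{*}=1$ and $\e = 1-\d$, which yields $Vol(M^n_{1-\d}) = \d^n Vol(M^n)$. For every $q \in M^n_{1-\d}$ there exists $\mu \in \Psi^{-1}(q)$ with $\mu(\{1\}) \ge 1-\d$, whence $q_1 = \int t \, d\mu \ge 1 \cdot \mu(\{1\}) \ge 1-\d$. Hence $M^n_{1-\d} \subseteq \{q \in M^n : q_1 \ge 1-\d\}$ and the lower bound follows.

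For the upper bound I parameterise $Int(M^n)$ by the upper principal representation of Theorem \ref{thm_principal}: writing $\mu = \lambda_{*}\delta_{1} + \sum_{i=1}^{m}\lambda_{i}\delta_{t_{i}} + \lambda_{0}\delta_{0}$ with $m = \lfloor n/2 \rfloor$ and $\lambda_{0} \equiv 0$ when $n$ is even, the slice condition $q_1 \ge 1-\d$ becomes the linear inequality $\lambda_0 + \sum_{i=1}^m \lambda_i(1-t_i) \le \d$. The Jacobian of $(t_i, \lambda_j) \mapsto (q_1, \dots, q_n)$ factors as in the Karlin--Shapley derivation of \ref{karlinshapley} in the form $\prod_i \lambda_i \cdot \prod_i t_i^{a}(1-t_i)^{b} \cdot \prod_{i<j}(t_j - t_i)^{c}$ with explicit small-integer exponents (for $n=3$ a direct $3 \times 3$ determinant yields $|J| = \lambda_{1} t_{1}^{2}(1-t_{1})^{2}$).

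The decisive substitution is $u_{i} := 1-t_{i}$ and $v_{i} := \lambda_{i} u_{i}$ (with $v_0 := \lambda_0$): the slice constraint collapses to the simplex $\sum_{i \ge 0} v_i \le \d$, the remaining probability constraint $\sum \lambda_i \le 1$ reads $v_0 + \sum v_i/u_i \le 1$, and the integrand becomes a Selberg-type density in $(u_i)$ times a polynomial in $(v_i)$. Integrating the $(v_i)$ out against the simplex via the Dirichlet formula produces a factor $\d^n$ times a Gamma-function ratio, while the remaining $(u_i)$-integral divided by the matching Karlin--Shapley Selberg integral for $Vol(M^n)$ telescopes to a combinatorial factor bounded by $\binom{2n-1}{n-1} \le 2^{2n-1} \le (2e)^n$ (the last step since $2^{n-1} \le e^n$). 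The main obstacle is the Jacobian-and-Selberg bookkeeping, in particular the parity split induced by the extra $\lambda_0$ variable in the odd case and the care required to control the residual constraint $v_0 + \sum v_i/u_i \le 1$ uniformly for $\d$ up to $1/2$; the $v_{i} = \lambda_{i} u_{i}$ substitution is the key structural move, decoupling the slice constraint from the Vandermonde weight $\prod(t_{j}-t_{i})^{c}$ and reducing everything to classical Dirichlet and Selberg integrals.
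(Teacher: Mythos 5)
Your proposal is correct and takes a genuinely different route from the paper's proof. The paper cites Chang--Kemperman--Studden (via Skibinsky's canonical coordinates) for the fact that the first moment $q_{1}$ is exactly Beta$(n,n)$-distributed under the uniform measure on $M^{n}$, and then bounds the incomplete beta function $I_{\delta}(n,n)$ above and below by direct binomial estimates. You instead argue from within the paper's own machinery: the lower bound follows in two lines by applying Lemma~\ref{lem_mass_sup} with $t_{*}=1$, $\e=1-\d$, and observing $M^{n}_{1-\d}\subseteq\{q:q_{1}\ge1-\d\}$; the upper bound comes from the upper principal representation, the substitution $u_{i}=1-t_{i}$, $v_{i}=\lambda_{i}u_{i}$, dropping the coupling constraint $v_{0}+\sum v_{i}/u_{i}\le1$ (harmless when proving an upper bound, since this only enlarges the domain), a Dirichlet integral over the simplex $\sum v_{i}\le\d$, and a Selberg ratio which one can verify is exactly $\binom{2n-1}{n}$ in both parities. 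Your approach is more self-contained (no appeal to the Chang--Kemperman--Studden marginal), and your lower bound in fact holds for all $\d\in[0,1]$ rather than just $\d\le\tfrac12$; the paper's approach is shorter once the Beta marginal is granted and yields the exact density rather than only two-sided bounds. One small remark: you flag ``care required to control the residual constraint'' as an obstacle, but for the upper bound no control is needed at all -- dropping it is precisely what makes the argument work -- and for the lower bound you have already bypassed the change of variables entirely via Lemma~\ref{lem_mass_sup}, so the residual constraint never actually enters.
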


\section{Integral Geometry of the Markov-Kre\u{\i}n Representations}
\label{sec_IntegralGeometry}
Here we will describe the Markov-Kre\u{\i}n representations of truncated moments and begin the development of their integral geometry. The history of this subject begins with \u{C}eby\u{s}ev and his student Markov's thesis,  followed by
work by Kre\u{\i}n and others,  where in
   \cite{Krein} one can find, not only an historical sketch, but substantial contributions by Kre\u{\i}n. Indeed,
it is clear from Karlin and Studden \cite{KarlinStudden:1966} that this subject owes a lot to Kre\u{\i}n.
Consequently, we refer to the (principal and canonical) representations that we use as
Markov-Kre\u{\i}n representations. It can be argued that the appropriate name should be
\u{C}eby\u{s}ev-Markov-Kre\u{\i}n representations but this name is too long and so we implicitly give credit to
\u{C}eby\u{s}ev.

Now, following Karlin and Studden  \cite[Chapters II \& IV]{KarlinStudden:1966},
 we describe the Markov-Kre\u{\i}n representations and determine their Jacobian determinants.
  We  finish this section
by setting up the change of variables approach, in preparation for
both the  proofs of the volume inequalities of Section \ref{sec_volumeinequalities} and  all that follows.
  To wit,
we define the index $i(t)$ of a strictly increasing  set $t$ of points  $0 \leq t_{1}  < t_{2} < \cdots < t_{N} \leq 1$
by counting the interior points with weight $1$ and boundary points  with weight $\frac{1}{2}$.
For a point $q \in M^{n}$ we say that a measure $\mu \in \mathcal{M}(I)$ is a representing measure for
$q$ if $\Psi(\mu)=q$ and
it is a weighted  sum of Dirac masses
\[
 \mu=\sum_{j=1}^{N}{\lambda_{j}\delta_{t_{j}}}, \quad \lambda_{j} >0, j=1,..,N
\]
for a strictly increasing set of points  $0 \leq t_{1}  < t_{2} < \cdots < t_{N} \leq 1$.
In that case, we have the formula
\[q_{i}= \bigl(\Psi(\mu)\bigr)_{i}=\sum_{j=1}^{N}{\lambda_{j}t_{j}^{i}}\, .\]
The index $i(\mu)$ of such a representing measure is defined to be the index $i(t)$ of its set of support points.

 A representing measure $\mu$ is called {\em principal} if $i(\mu)= \frac{n+1}{2}$ and {\em canonical} if
$i(\mu) \leq \frac{n+2}{2}$.
For $q \in Int(M^{n})$, \cite[Thm.~2.1]{KarlinStudden:1966} asserts
that $i(\mu)\geq \frac{n+1}{2}$ for any representing measure $\mu$ for $q$.
  A principal or canonical representation is called ``lower'' if its set of support points {\em does not} include the righthand endpoint $1$ and ``upper'' if it does.
The following two results will be our main tools.
The first is the  principal representation, see \cite[Cor.~3.1]{KarlinStudden:1966}.
\begin{thm}
\label{thm_principal}
Every point $q \in Int(M^{n})$ has a unique
upper and lower principal representation.
\end{thm}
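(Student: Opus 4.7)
The plan is to prove existence and uniqueness of the lower principal representation; the upper case follows by the involution $t \mapsto 1 - t$ on $I$ applied to measures. The organizing idea is to identify the lower principal representation as the unique minimizer on $\Psi^{-1}(q)$ of the linear functional
\[
L(\mu) := \int_{0}^{1} t^{n+1}\, d\mu(t).
\]

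For existence, $\Psi^{-1}(q) \subset \mathcal{M}(I)$ is nonempty, convex, and weak-$*$ compact, so $L$ attains its infimum; by the Kre\u{\i}n--Milman theorem a minimizer $\mu^{-}$ can be chosen to be an extreme point of $\Psi^{-1}(q)$, hence atomic with at most $n+1$ atoms. Linear programming duality (a separating-hyperplane argument in the moment cone) produces real coefficients $c_0,\ldots,c_n$ such that
\[
P(t) := t^{n+1} - \sum_{i=0}^{n} c_i t^{i}
\]
satisfies $P \geq 0$ on $[0,1]$ and $P \equiv 0$ on $\supp(\mu^{-})$ by complementary slackness. Nonnegativity forces interior zeros of $P$ to have even multiplicity. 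The strict interiority $q \in Int(M^{n})$ enables a perturbation argument: if $P(1) = 0$, one can transport a small amount of mass of $\mu^{-}$ away from $t = 1$, using a representing measure of a nearby $q' \in M^{n}$, to strictly decrease $L$, contradicting minimality. Hence $P(1) > 0$ and $1 \notin \supp(\mu^{-})$. A zero count of $P$ in $[0,1)$, combined with the Karlin--Studden lower bound $i(\mu) \geq (n+1)/2$ cited in the excerpt, pins the support of $\mu^{-}$ to exactly the structure of a lower principal representation in both parities of $n$: for $n = 2m-1$ odd, $m$ interior double roots of $P$; for $n = 2m$ even, the simple root $0$ together with $m$ interior double roots.

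For uniqueness, suppose $\mu_{1},\mu_{2}$ are two lower principal representations of $q$. The signed measure $\nu := \mu_{1} - \mu_{2}$ satisfies $\int t^{i}\, d\nu = 0$ for $i = 0,1,\ldots,n$. The definition of lower principal restricts the support of each $\mu_{j}$ to $m$ interior points for $n = 2m-1$, or to $\{0\}$ plus $m$ interior points for $n = 2m$; in either case $\supp(\mu_{1}) \cup \supp(\mu_{2})$ consists of at most $n+1$ distinct points in $[0,1)$. A nonzero atomic signed measure on at most $n+1$ distinct points cannot have its first $n+1$ moments all vanish, because the associated Vandermonde system is nonsingular. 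Hence $\nu = 0$, so $\mu_{1} = \mu_{2}$.

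The most delicate step will be the perturbation argument forcing $P(1) > 0$: one needs to construct a competing feasible $\mu'$ from $\mu^{-}$ using only the assumption that $q$ sits in the interior of $M^{n}$. The remaining work---the parity-dependent zero count for $P$ and the Vandermonde uniqueness for $\nu$---is elementary bookkeeping once the support structure of lower principal representations is spelled out.
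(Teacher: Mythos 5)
The paper does not supply a proof of this statement; Theorem~\ref{thm_principal} is quoted from Karlin and Studden (their Corollary 3.1), so there is no in-paper argument to compare your attempt against. Your variational strategy --- identify the lower principal representation as the unique minimizer of $L(\mu) = \int_0^1 t^{n+1}\, d\mu$ over $\Psi^{-1}(q)$, then extract a nonnegative dual polynomial and count its zeros --- is the standard route in this part of the moment problem literature, and your uniqueness argument (Vandermonde nonsingularity on at most $n+1$ distinct support points) is correct.

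However, the existence half has a genuine gap at exactly the spot you flag. The proposed perturbation argument for forcing $P(1) > 0$ does not work as stated: $P(1) = 0$ does not by itself place $1$ in $\supp(\mu^{-})$, so there need be no mass at $t = 1$ to ``transport away,'' and even when $1 \in \supp(\mu^{-})$, transporting mass to $1-\epsilon$ changes $q$, so you would have to build a corrector signed measure that (a) restores all $n$ moments, (b) keeps the total measure nonnegative, and (c) produces a strict net decrease of $L$ --- none of which is spelled out. Moreover, the zero count alone cannot distinguish the lower from the upper principal representation: for $n = 2m-1$ both the lower zero configuration ($m$ interior double roots) and the upper one ($0$ and $1$ simple plus $m-1$ interior double roots) give a zero set of index $(n+1)/2$, so ``interiority plus zero count'' is not enough. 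What you are missing is the sign of the leading coefficient of $P$: since $P(t) = t^{n+1} - \sum_{i\le n} c_i t^i$ is monic of degree exactly $n+1$, the upper configuration would force $P(t) = t(t-1)\prod_{j}(t - t_j)^2$ for $n$ odd (respectively $P(t) = (t-1)\prod_j(t-t_j)^2$ for $n$ even), which is $\le 0$ on $(0,1)$ and hence contradicts $P\ge 0$; the lower configuration $P(t) = \prod_j(t-t_j)^2$ (odd) or $P(t) = t\prod_j(t-t_j)^2$ (even) is the only monic option compatible with nonnegativity. Replacing the perturbation step by this leading-coefficient/sign argument closes the gap and makes the existence proof complete without any appeal to $Int(M^{n})$ beyond what gives the index lower bound $i(\mu)\ge (n+1)/2$.
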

The second is  the canonical representation which allows the specification of a predetermined  point  $t_{*}\in I$ in the support of the representing measure, see \cite[Thm.~3.1]{KarlinStudden:1966} combined with  \cite[Cor.~3.2]{KarlinStudden:1966} and
\cite[Cor.~3.1]{KarlinStudden:1966}.
\begin{thm}
\label{thm_canonical}
For $t_{*}\in (0,1)$,
  every point $q \in Int(M^{n})$ has a unique canonical representation
 whose support contains
$t_{*}$.
When $t_{*}=0$ or $1$, there exists a unique principal
representation whose support contains $t_{*}$.
\end{thm}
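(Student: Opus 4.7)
The plan is to reduce the theorem to the principal representation statement (Theorem~\ref{thm_principal}) by peeling off a Dirac mass at $t_{*}$ and analyzing the resulting one-parameter family of residual moment problems.

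Writing $\psi(t):=(t,t^{2},\ldots,t^{n})$, any representing measure of $q$ containing $t_{*}$ in its support decomposes as $\mu=\alpha\delta_{t_{*}}+(1-\alpha)\nu$ with $\alpha\in(0,1)$ and $\nu\in\mathcal{M}(I)$ representing the point $q'(\alpha):=(q-\alpha\psi(t_{*}))/(1-\alpha)$. Because $q\in Int(M^{n})$ and $M^{n}$ is compact and convex, I would first show that the set $A(q):=\{\alpha\in[0,1):q'(\alpha)\in M^{n}\}$ is a closed interval $[0,\alpha_{\max}]$ with $\alpha_{\max}\in(0,1)$, that $q'(\alpha_{\max})\in\partial M^{n}$, and that $q'(\alpha)\in Int(M^{n})$ for $\alpha\in[0,\alpha_{\max})$; this is immediate from the affine parametrization of the segment $\alpha\mapsto q'(\alpha)$ intersected with the moment body.

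The second ingredient is the classical boundary characterization: $p\in\partial M^{n}$ if and only if $p$ admits a (necessarily unique) representing measure $\nu$ with index $i(\nu)\leq n/2$, whereas every representing measure of an interior point has index at least $(n+1)/2$. For $t_{*}\in(0,1)$, existence follows by taking $\nu^{*}$ to be the unique boundary representation of $q'(\alpha_{\max})$ and setting $\mu^{*}:=\alpha_{\max}\delta_{t_{*}}+(1-\alpha_{\max})\nu^{*}$; then $t_{*}\in\supp(\mu^{*})$ and $i(\mu^{*})\leq 1+n/2=(n+2)/2$, so $\mu^{*}$ is canonical. For uniqueness, any canonical representation $\mu$ of $q$ with $t_{*}\in\supp(\mu)$, decomposed as above, must satisfy $i(\nu)\leq(n+2)/2-1=n/2$, which forces $q'(\alpha)\in\partial M^{n}$ and hence $\alpha=\alpha_{\max}$; boundary uniqueness then yields $\nu=\nu^{*}$.

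For the endpoints $t_{*}\in\{0,1\}$ the same construction applies, but now $t_{*}$ contributes $1/2$ rather than $1$ to the index. Any principal representation containing $t_{*}$ thus gives a residual $\nu$ with $i(\nu)\leq(n+1)/2-1/2=n/2$, again forcing $q'(\alpha)\in\partial M^{n}$; the same parametric argument produces a unique such principal representation, while the lower bound $i(\mu)\geq(n+1)/2$ for representations of interior $q$ forces the total index to equal $(n+1)/2$ exactly. The main obstacle is the boundary characterization of $M^{n}$ invoked in the second paragraph: its proof rests on the T-system structure of $\{1,t,\ldots,t^{n}\}$ and the Chebyshev-type extremal polynomial machinery developed in Karlin--Studden~\cite[Ch.~II--IV]{KarlinStudden:1966}; with that tool granted, the remainder of the argument is the parametric reduction and index bookkeeping above.
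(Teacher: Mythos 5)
The paper does not actually prove Theorem~\ref{thm_canonical}: its ``proof'' is the citation to Karlin--Studden (\cite[Thm.~3.1, Cor.~3.1, Cor.~3.2]{KarlinStudden:1966}), so there is no internal argument to compare against. What you have written is a sound reconstruction of the underlying Markov--Kre\u{\i}n maximal-mass argument. The skeleton is right: write $\mu=\alpha\delta_{t_*}+(1-\alpha)\nu$, observe that $q'(\alpha)=q+\tfrac{\alpha}{1-\alpha}(q-\psi(t_*))$ is a ray from the interior point $q$, so by compactness and convexity there is a unique $\alpha_{\max}\in(0,1)$ at which the ray exits through $\partial M^n$ (with $q'(\alpha)\in Int(M^n)$ for $\alpha<\alpha_{\max}$ because a strict convex combination of an interior point with any point of the body is interior), and then let the boundary characterization of index do the bookkeeping. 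Two small points deserve to be made explicit in a fleshed-out version. First, you implicitly need $t_*\notin\supp(\nu^*)$ before concluding $i(\mu^*)=i(\nu^*)+1$ (or $+\tfrac12$ at an endpoint); this does hold, and the cleanest reason is maximality of $\alpha_{\max}$: if $\nu^*$ carried positive mass $\beta$ at $t_*$, one could re-absorb it into the Dirac at $t_*$ and push $\alpha$ to $\alpha_{\max}+(1-\alpha_{\max})\beta>\alpha_{\max}$ with the residual still in $M^n$, a contradiction. (Alternatively, $t_*\in\supp(\nu^*)$ would give $i(\mu^*)=i(\nu^*)\le n/2<(n+1)/2$, contradicting the interior lower bound cited as \cite[Thm.~2.1]{KarlinStudden:1966}.) Second, in the uniqueness step the inference ``$i(\nu)\le n/2$ forces $q'(\alpha)\in\partial M^n$'' deserves a sentence: it is exactly the contrapositive of the interior index bound already stated in the paper for Theorem~\ref{thm_principal}. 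Neither of these is a gap in the idea, only in the exposition. The one genuine external input --- that $\partial M^n$ consists precisely of the moment vectors with a unique representing measure, of index $\le n/2$ --- is the T-system boundary theory in Karlin--Studden Ch.~II, and you correctly flag it as the irreducible black box, which is no worse than what the paper itself does.
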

What Theorem \ref{thm_canonical} doesn't make clear is if the canonical representations converge to
these principal representation as $t_{*}$ tends to $0$ or $1$. They indeed do as we will see.
Let us define some notation that we will use henceforward. We consider two coordinate representations of the
interior of the regular unit simplex. In particular, let
\[T^{N}=\{(t_{1},..,t_{N}):  0 < t_{1} < t_{2} < \cdots < t_{N} < 1\}\]
denote the set of strictly increasing sequences of length $N$ in the interior to $I$ and
\[\Lambda^{N}=\{(\lambda_{1},..,\lambda_{N}):  \lambda_{j}>0,\, j=1,..,N,\, \sum_{j=1}^{N}{\lambda_{j}}<1\}\, .
\]
denote the interior to the positive orthant restricted to $\lambda\cdot \eins < 1$.
Sometimes it will be convenient to abuse this notation and  shift indices so that
\[\Lambda^{N}=\{(\lambda_{0},..,\lambda_{N-1}):  \lambda_{j}>0,\, j=0,..,N-1,\, \sum_{j=0}^{N-1}{\lambda_{j}}<1\}\, .
\]
We will often use the fact that
$I^{N}$ can be described by $N!$ copies of $T^{N}$ corresponding to permuting the sequence.

We use the notation $t$ for a vector with coordinates $t_{j}$ and similarly
$ \lambda$ for a vector with coordinates $\lambda_{j}$.
We use the superscripts $p$ for ``principal'' and $c$ for ``canonical'',
the subscripts $o$ for ``odd'', $e$ for ``even'',
 $l$ for ``lower'', and $u$ for ``upper''.
Finally, we purposefully ignore
multiples of $\pm 1$ in all our determinant calculations. With proper caution, this causes no harm since at the end of the day
we take the absolute value.

\subsection{Principal Representations}
\label{sec_principal}
Theorem \ref{thm_principal} asserts that each  $q \in Int(M^{n})$ has a unique
upper and lower principal representation. We now define these representations as maps and compute their Jacobian determinants. We state these propositions without proof, since these proofs are very similar to those for the
canonical representations of
Propositions and \ref{prop_Jco} and \ref{prop_Jce}.

First consider the odd case when $n=2m-1$.
 Then since $\frac{n+1}{2}=m$ is an integer, it follows that the support of any lower principal representation  contains neither endpoint and the support of any upper principal representation contains both endpoints. Consequently,
Theorem \ref{thm_principal} implies that
each point  in $Int(M^{2m-1})$ has a unique lower principal representation of the form
\begin{equation}
\label{rep_principal_odd}
 \mu=\sum_{j=1}^{m}{\lambda_{j}\delta_{t_{j}}}, \quad \lambda_{j} >0,\, j=1,..,m, \,\, \sum_{j=1}^{m}{\lambda_{j}}=1
\end{equation}
where $0 < t_{1} < t_{2} < \cdots < t_{m} < 1$.
Therefore,
 consider the bijection
\[ \phi^{p}_{ol}: \Lambda^{m-1} \times T^{m} \rightarrow Int(M^{2m-1})\]  defined by
\begin{eqnarray}
\label{def_phi_pol}
 \phi^{p}_{ol}(\lambda,t)
&= &\Psi  \Bigl( \sum_{j=1}^{m-1}{\lambda_{j}\delta_{t_{j}}} +(1-\sum_{j=1}^{m-1}{\lambda_{j}})\delta_{t_{m}}\Bigr)\notag\\
&=& \Bigl(\sum_{j=1}^{m-1}{\lambda_{j}t_{j}^{i}}+ (1-\sum_{j=1}^{m-1}{\lambda_{j})t_{m}^{i}}\Bigr)_{i=1}^{2m-1}\, .
\end{eqnarray}
It also follows that
each point in $Int(M^{2m-1})$  has a unique upper principal representation of the form
\begin{equation}
\label{rep_principal_odd_u}
 \mu=\lambda_{0}\delta_{0}+\sum_{j=1}^{m-1}{\lambda_{j}\delta_{t_{j}}} +\lambda_{m}\delta_{1},
 \quad \lambda_{j} >0,\, j=0,..,m, \,\, \sum_{j=0}^{m}{\lambda_{j}}=1
\end{equation}
where $0 < t_{1} < t_{2} < \cdots < t_{m-1} < 1$.
Therefore,
 consider the bijection
\[ \phi^{p}_{ou}: \Lambda^{m} \times T^{m-1} \rightarrow Int(M^{2m-1})\]  defined by
\begin{eqnarray}
\label{def_phi_pou}
 \phi^{p}_{ou}(\lambda,t)
&= &\Psi  \Bigl(\lambda_{0}\delta_{0}+ \sum_{j=1}^{m-1}{\lambda_{j}\delta_{t_{j}}} +
(1-\sum_{j=0}^{m-1}{\lambda_{j}})\delta_{1}\Bigr)\notag\\
&=& \Bigl(\sum_{j=1}^{m-1}{\lambda_{j}t_{j}^{i}}+ (1-\sum_{j=0}^{m-1}{\lambda_{j})}\Bigr)_{i=1}^{2m-1}\, .
\end{eqnarray}

For an increasing sequence $t_{j} < t_{j+1}$ let
\begin{equation}
\label{def_vandermonde}
\Delta(t):=\prod_{ j<k }{(t_{k}-t_{j})}
\end{equation}
denote the Vandermonde determinant
 (see e.g.~\cite[Pg.~400]{HornJohnson2})
  of the matrix with entries $[t_{j}^{i}],\, j=1,..,N,\, i=0,..,N-1$.
which we write as $\Delta_{N}$ to emphasize the dimension of $t$.
We will also use the same formula for non-increasing sequences when we eventually take the absolute value.

\begin{prop}
\label{prop_Jpo}
When $n=2m-1$, the Jacobian determinants are
\[ |det(d\phi^{p}_{ol})|(\lambda,t)= \mathcal{J}^{p}_{ol}(t)
\bigl(1-\sum_{j=1}^{m-1}{\lambda_{j}}\bigr)\prod_{j=1}^{m-1}{\lambda_{j}} \]
\[|det(d\phi^{p}_{ou})|(\lambda,t)= \mathcal{J}^{p}_{ou}(t)
\prod_{j=1}^{m-1}{\lambda_{j}} \]
where
\[\mathcal{J}^{p}_{ol}(t):=
 \Delta^{4}_{m}(t)
	\]
\[\mathcal{J}^{p}_{ou}(t):=\prod_{j=1}^{m-1}{t_{j}^{2}(1-t_{j})^{2}}
 \cdot \Delta_{m-1}^{4}(t)\, .\]
\end{prop}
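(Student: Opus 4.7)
The plan is to reduce both Jacobian computations to a classical confluent (Hermite) Vandermonde determinant. The common device is to introduce an auxiliary $2m$-dimensional extension $\tilde\phi$ of the map, obtained by relaxing the constraint $\sum_j \lambda_j = 1$ and adjoining the zero-th moment $q_0 = \sum_j \lambda_j$ as an extra output. After a triangular unit-Jacobian change of variables replacing one weight by the total mass $S := \sum_j \lambda_j$, the $q_0$-row of $d\tilde\phi$ carries a single nonzero entry (in the $S$-column), so Laplace expansion along that row identifies $|\det d\tilde\phi|$ with the corresponding $(2m-1)\times(2m-1)$ minor, which is precisely $|\det d\phi^{p}_{ol}|$ (respectively $|\det d\phi^{p}_{ou}|$) evaluated at $S = 1$.

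For the lower case, factoring $\lambda_k$ out of each $t_k$-column of $d\tilde\phi$ produces $\prod_{j=1}^m \lambda_j$ times a matrix whose columns are $\bigl(t_j^i\bigr)_{i=0}^{2m-1}$ and $\bigl(i\, t_k^{i-1}\bigr)_{i=0}^{2m-1}$, that is (up to column permutation) the confluent Vandermonde on the nodes $t_1,\ldots,t_m$ each of multiplicity $2$. The classical Hermite identity
\[
\det V_{\mathrm{conf}}(x_1,\ldots,x_r;m_1,\ldots,m_r) \;=\; \prod_{1 \le i < j \le r}(x_j - x_i)^{m_i m_j}
\]
specializes to $\Delta_m^4(t)$; substituting $\lambda_m = 1 - \sum_{j=1}^{m-1}\lambda_j$ then recovers the stated formula for $|\det d\phi^{p}_{ol}|$.

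For the upper case the weights $\lambda_0, \lambda_m$ remain among the free parameters and the factored matrix is the confluent Vandermonde at the nodes $0, t_1, \ldots, t_{m-1}, 1$ with multiplicities $1, 2, \ldots, 2, 1$. The Hermite identity contributes $t_k^2$ from each pair $(0, t_k)$, $(1-t_k)^2$ from each pair $(t_k, 1)$, $(t_k - t_j)^4$ from pairs of interior nodes, and $1$ from the endpoint pair $(0,1)$; only the interior columns furnish a $\lambda_k$ factor upon extraction, because $\lambda_0$ and $\lambda_m$ appear only through evaluation columns. The product is $\prod_{k=1}^{m-1}\lambda_k \cdot \prod_{k=1}^{m-1} t_k^2 (1-t_k)^2 \cdot \Delta_{m-1}^4(t)$, matching the claim.

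The main obstacle is book-keeping: tracking the column permutation that groups the evaluation and derivative columns into the standard Hermite order in which the formula applies, and verifying that no factorial constants sneak in from higher-order derivative blocks. Because all multiplicities here are at most $2$, only $0!$ and $1!$ enter the normalization and the classical confluent Vandermonde formula applies without correction; any stray signs from column reordering are absorbed by the absolute value taken in the statement.
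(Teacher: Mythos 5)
Your argument is correct and, in substance, identical to what the paper does (as can be inferred from the proofs of Propositions \ref{prop_Jco} and \ref{prop_Jce}, since the paper explicitly omits the proof of Proposition \ref{prop_Jpo} as analogous): factor the $\lambda_j$ out of the $t_j$-columns and evaluate the remaining confluent Vandermonde on the nodes with multiplicities $2$ at the interior $t_j$ and $1$ at any fixed endpoints. Your augmented-map device $\tilde\phi$ plus Laplace expansion is a slightly cleaner packaging of the paper's row/column augmentation of the determinant, and citing the classical Hermite confluent-Vandermonde formula directly is equivalent to the paper's use of the Karlin--Studden derivative identity \eqref{karlinstudden}, which is precisely that formula specialized to multiplicity~$2$.
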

Note that although each term appears to have the same multiplier $\prod_{j=1}^{m-1}{\lambda_{j}}$,
in the lower case
this multiplier is the full product in
 on $\Lambda^{m-1}$ and in the upper case it is only a partial product on $\Lambda^{m}$, that is, it is missing
the $\lambda_{0}$ term.
Finally, let us observe the symmetries under the reflection $t \mapsto 1-t$:
\begin{eqnarray}
\label{Jposymm}
\mathcal{J}^{p}_{ol}(1-t)&=&\mathcal{J}^{p}_{ol}(t)\notag\\
\mathcal{J}^{p}_{ou}(1-t)&=&\mathcal{J}^{p}_{ou}(t)\, .
\end{eqnarray}

Now consider the even case when $n=2m$. Since $\frac{n+1}{2}=m+\frac{1}{2}$ is an integer plus $\frac{1}{2}$
it follows that the support of any lower principal representation
contains the left endpoint but not the right and any upper principal representation  contains the right endpoint
but not the left. Let us first consider the lower representation.
Theorem \ref{thm_principal} implies that
 every point in the interior  $Int(M^{2m})$ has a unique lower principal representation of the form
\[\mu =\sum_{j=1}^{m}{\lambda_{j}\delta_{t_{j}}}+(1-\sum_{j=1}^{m}{\lambda_{j}})\delta_{0},
 \quad \lambda_{j} >0,\, j=1,..,m,\,\, \sum_{j=1}^{m}{\lambda_{j}} <1\]
where $0 < t_{1} < \cdots t_{m} < 1$.
Therefore, consider the bijection
\[ \phi^{p}_{el}:\Lambda^{m}\times T^{m} \rightarrow Int( M^{2m})\] defined by
\begin{eqnarray}
\label{def_phi_pel}
 \phi^{p}_{el}(\lambda,t)
&= &\Psi  \Bigl(\sum_{j=1}^{m}{\lambda_{j}\delta_{t_{j}}}+(1-\sum_{j=1}^{m}{\lambda_{j}})\delta_{0}\Bigr)\\
&=& \Bigl(\sum_{j=1}^{m}{\lambda_{j}t_{j}^{i}}\Bigr)_{i=1}^{2m}\, .
\end{eqnarray}
On the other hand,
 every point in the interior  $Int(M^{2m})$ has a unique upper principal representation of the form
\[\mu =\sum_{j=1}^{m}{\lambda_{j}\delta_{t_{j}}}+(1-\sum_{j=1}^{m}{\lambda_{j}})\delta_{1},
 \quad \lambda_{j} >0,\, j=1,..,m,\,\, \sum_{j=1}^{m}{\lambda_{j}} <1\]
where $0 < t_{1} < \cdots t_{m} < 1$.
Therefore, consider the bijection
\[ \phi^{p}_{eu}:\Lambda^{m}\times T^{m} \rightarrow Int( M^{2m})\] defined by
\begin{eqnarray}
\label{def_phi_peu}
 \phi^{p}_{eu}(\lambda,t)
&= &\Psi  \Bigl(\sum_{j=1}^{m}{\lambda_{j}\delta_{t_{j}}}+(1-\sum_{j=1}^{m}{\lambda_{j}})\delta_{1}\Bigr)\\
&=& \Bigl(\sum_{j=1}^{m}{\lambda_{j}t_{j}^{i}}+ (1-\sum_{j=1}^{m}{\lambda_{j})}\Bigr)_{i=1}^{2m}\, .
\end{eqnarray}
\begin{prop}
\label{prop_Jpe}
When $n=2m$ the Jacobian determinants are
\[ |det(d\phi^{p}_{el})|(\lambda,t)= \mathcal{J}^{p}_{el}(t)\prod_{j=1}^{m}{\lambda_{j}}
 \]
\[ |det(d\phi^{p}_{eu})|(\lambda,t)= \mathcal{J}^{p}_{eu}(t)
\prod_{j=1}^{m}{\lambda_{j}} \]
where
\[
\mathcal{J}^{p}_{el}(t)
:= \prod_{j=1}^{m}{t_{j}^{2}}
\cdot \Delta_{m}^{4}(t)
\]
\[
\mathcal{J}^{p}_{eu}(t)
:= \prod_{j=1}^{m}{(1-t_{j})^{2}}\cdot \Delta_{m}^{4}(t)
\]
\end{prop}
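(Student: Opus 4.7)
The plan is to compute $|\det d\phi^p_{el}|$ directly from the Jacobian and then obtain $|\det d\phi^p_{eu}|$ from it by a reflection symmetry argument rather than redoing the calculation.

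For $\phi^p_{el}$, the coordinates are $q_i=\sum_{j=1}^m \lambda_j t_j^i$ for $i=1,\ldots,2m$. Order the Jacobian columns as $(\partial_{\lambda_1},\ldots,\partial_{\lambda_m},\partial_{t_1},\ldots,\partial_{t_m})$; the $\lambda_j$-column in row $i$ is $t_j^i$ and the $t_j$-column in row $i$ is $i\lambda_j t_j^{i-1}$. Factor $\lambda_j$ out of each derivative column to produce an overall $\prod_j\lambda_j$ times a matrix $A$ whose $j$-th value column is $(t_j,t_j^2,\ldots,t_j^{2m})^T$ and $j$-th derivative column is $(1,2t_j,3t_j^2,\ldots,2m\,t_j^{2m-1})^T$. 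The problem reduces to computing $\det A$.

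The main obstacle is that $A$ is a confluent Vandermonde matrix for the \emph{non-standard} polynomial basis $\{t,t^2,\ldots,t^{2m}\}$ with each node $t_j$ of multiplicity $2$, rather than the standard basis $\{1,t,\ldots,t^{2m-1}\}$. To handle this, I compare $A$ with the usual confluent Vandermonde $V$ (same nodes and multiplicities but standard basis), whose determinant is the classical $\prod_{j<k}(t_k-t_j)^4=\Delta_m^{4}(t)$. A direct check shows that the pair of columns of $A$ attached to $t_j$ is obtained from the pair in $V$ by right multiplication by the $2\times 2$ block $\bigl(\begin{smallmatrix}t_j & 1 \\ 0 & t_j\end{smallmatrix}\bigr)$: indeed the value column of $A$ equals $t_j$ times the value column of $V$, and the derivative column of $A$ equals the value column of $V$ plus $t_j$ times its derivative column. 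In the grouped column order this gives $A=V\tilde B$ with $\tilde B=\bigl(\begin{smallmatrix}T & I \\ 0 & T\end{smallmatrix}\bigr)$ where $T=\mathrm{diag}(t_1,\ldots,t_m)$, so $\det\tilde B=\prod_j t_j^2$, and therefore $|\det A|=\prod_{j=1}^m t_j^2\cdot\Delta_m^{4}(t)$, which gives the claimed $|\det d\phi^p_{el}|$.

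For $\phi^p_{eu}$, I use the reflection $R:t\mapsto 1-t$ on $I$. Pushforward sends $\sum_j\lambda_j\delta_{t_j}+(1-\sum_j\lambda_j)\delta_0$ to $\sum_j\lambda_j\delta_{1-t_j}+(1-\sum_j\lambda_j)\delta_1$, and reindexing the atoms in increasing order via $\tau_k=\lambda_{m+1-k}$, $s_k=1-t_{m+1-k}$ shows that $\phi^p_{eu}(\tau,s)=T\bigl(\phi^p_{el}(\lambda,t)\bigr)$, where $T:\R^{2m}\to\R^{2m}$ is the affine map induced on moments by $R_*$. Its linear part $(T_0)_{ik}=\binom{i}{k}(-1)^k$ is lower triangular with diagonal $(-1)^i$, so $|\det T_0|=1$; the reindexing $(\lambda,t)\mapsto(\tau,s)$ is a permutation composed with an affine reflection, hence also has $|\det|=1$. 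Chain rule then yields $|\det d\phi^p_{eu}(\tau,s)|=|\det d\phi^p_{el}(\lambda,t)|$, and substituting $t_j=1-s_{m+1-j}$ and $\lambda_j=\tau_{m+1-j}$ into the explicit formula converts $\prod_j t_j^2$ into $\prod_k(1-s_k)^2$ while leaving $\Delta_m^{4}$ invariant (using $(x-y)^4=(y-x)^4$), producing the claimed $\mathcal{J}^p_{eu}$.
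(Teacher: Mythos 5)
Your proof is correct, and it takes a genuinely different route from the one the paper leans on. The paper states Proposition~\ref{prop_Jpe} without proof, noting that the argument mirrors the proofs of Propositions~\ref{prop_Jco} and~\ref{prop_Jce}. In those proofs the remaining determinant (after pulling out $\prod_j \lambda_j$) is realized as a limit of a genuine Vandermonde determinant in doubled variables $t_j, s_j$: one introduces auxiliary points $s_j$, writes the matrix as $\frac{\partial^m}{\partial s_1\cdots\partial s_m}\Delta(t_1,s_1,\ldots,t_m,s_m)\big|_{s=t}$, factors the Vandermonde by its recursion to peel off $\prod t_j$-type factors, and then invokes the Karlin--Shapley identity~\eqref{karlinstudden}, which is exactly the statement that the remaining differentiated Vandermonde equals $\Delta_m^4(t)$. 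You instead keep the matrix $A$ intact, recognize it as a confluent Vandermonde in the shifted monomial basis $\{t,\ldots,t^{2m}\}$, and relate it to the standard confluent Vandermonde $V$ (whose determinant is $\Delta_m^4(t)$) by an explicit right multiplication by the block-triangular $\tilde B=\bigl(\begin{smallmatrix}T & I\\ 0& T\end{smallmatrix}\bigr)$, extracting the factor $\prod_j t_j^2 = \det\tilde B$. This is a purely linear-algebraic change-of-basis argument and avoids the differentiation trick entirely; it also isolates \emph{why} the extra $\prod t_j^2$ appears (the nonstandard basis starts at $t$ rather than $1$), which is somewhat obscured in the paper's approach. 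For the upper case you also diverge: rather than re-running the determinant computation, you derive $\mathcal{J}^p_{eu}$ from $\mathcal{J}^p_{el}$ by conjugating with the reflection $t\mapsto 1-t$ and the induced unimodular affine map on moments, thereby establishing the symmetry~\eqref{Jpesymm} from structure rather than merely observing it on the final formulas; this is a tidy addition. Both approaches are sound; the paper's generalizes more directly to the canonical representations with a free $t_*$, while yours is more elementary and self-contained for the principal case.
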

Here, instead of the reflection  $t \mapsto 1-t$ leaving the lower and upper invariant as in the  odd case \eqref{Jposymm},
 reflection swaps lower and upper;
\begin{equation}
\label{Jpesymm}
\mathcal{J}^{p}_{el}(1-t)=\mathcal{J}^{p}_{eu}(t)\, .
\end{equation}

\subsection{Canonical Representations}
\label{sec_canonical}
Theorem \ref{thm_canonical} asserts that, when  $t_{*}\in (0,1)$,
every point in  $Int(M^{n})$ has a unique canonical representation whose support contains
$t_{*}$, and when $t_{*}\in \{0,1\}$, it has a unique principal representation whose support contains
$t_{*}$.   Therefore, every point in $Int(M^{n})$ has
   a unique representing measure $\mu$  such that
\begin{equation}
\label{canonicalrep}
 \mu=\sum_{j=1}^{N}{\lambda_{j}\delta_{t_{j}}}, \quad \lambda_{j} >0,\, j=1,..,N, \quad \sum_{j=1}^{N}{\lambda_{j}}=1
\end{equation}
 such that the sequence $0 \leq t_{1}  < t_{2} < \cdots < t_{N} \leq 1$ contains
$t_{*}$, where for $t_{*}\in (0,1)$,  the sequence  has
index $\frac{n+1}{2}$ or $\frac{n+2}{2}$, and when
 $t_{*}=0$ or $1$,
the index is $\frac{n+1}{2}$.
Now let us remove $t_{*}$ from the list and use the identity $\sum_{j=1}^{N}{\lambda_{j}}=1$ to solve
for the weight $\lambda_{t_{*}}$ corresponding to $t_{*}$. Changing notation from  $N \mapsto N+1$  and relabeling the indices, we
 obtain that
\begin{equation}
\label{canonicalrep5}
 \mu=\sum_{j=1}^{N}{\lambda_{j}\delta_{t_{j}}} +\bigl(1-\sum_{j=1}^{N}{\lambda_{j}}\bigr)\delta_{t_{*}},
 \quad \lambda \in \Lambda^{N}\, ,
\end{equation}
where the resulting sequence
\[0 \leq t_{1}  < t_{2} < \cdots < t_{N} \leq 1\]  {\em does not} contain
$t_{*}$, and when
$t_{*}\in (0,1)$, the removal of this interior point implies that the resulting sequence has
  index $\frac{n-1}{2}$ or $\frac{n}{2}$ and when
$t_{*}=0$ or $1$,  the removal of this boundary point implies that the resulting sequence has
index $\frac{n}{2}$.

Consequently, for $t_{*}\in (0,1)$,  to represent $Int(M^{n})$
 we can split
 into four domains, two corresponding to the two ways
of producing index $\frac{n-1}{2}$ and two corresponding the two ways of producing index $\frac{n}{2}$.
When $n$ is even one of the two index $\frac{n-1}{2}$ configurations corresponds to including
$t=0$ in the sequence and not $t=1$ and the other  corresponds to including
$t=1$ in the sequence and not $t=0$, while one of  the two
 index $\frac{n}{2}$ configurations corresponds to not allowing $t=0$ or $t=1$  and
the other corresponds to including both $t=0$ and $t=1$. When $n$ is odd this relationships is reversed.
Similarly, when $t_{*}\in \{0,1\}$, we can can split
 into two domains corresponding to  the two ways
of producing index $\frac{n}{2}$.

However, we can show that the representations of index $\frac{n-1}{2}$ produce {\em zero} volume and so can be
excluded from the integral analysis.  To that end, we only need to consider the $t_{*}\in (0,1)$ case.
 Then let us
decompose the set of sequences of index $\bar{N}$  by their endpoint configurations. That is, split
such sequences into those which contain $0$ but not $1$,  $1$ but not $0$, $0$ and $1$, and neither
$0$ or $1$. Some of these components will be empty. On any of these endpoint specific subdomains
let \[\mathcal{I} \subset \{1,..,N\}\]  denote the  indices of the interior points,
so that in this notation we have
\begin{eqnarray*}
 \mu&=&\sum_{j\in \mathcal{I}}{\lambda_{j}\delta_{t_{j}}} +\sum_{j\in  \mathcal{I}^{c}}{ \lambda_{j}}\delta_{t_{j}} +
 \bigl(1-\sum_{j=1}^{N}{\lambda_{j}}\bigr)\delta_{t_{*}}\, ,\quad
\lambda \in \Lambda^{N} . \notag
\end{eqnarray*}
Moreover, for a sequence $t$, let $\mathring{t}$ denote the sequence of interior points, and
 define  $\mathring{T}_{*}:=\{\mathring{t}: t \in T^{N}, t_{j} \neq t_{*},\, j=1,..,N \}$  to be the set of interior points which do not cover $t_{*}$ and
  consider the map
\[ \phi:\Lambda^{N} \times \mathring{T}_{*}  \rightarrow Int(M^{n})\]
defined by
\begin{eqnarray}
\label{def_phi}
 \phi(\lambda,\mathring{t})
&= &\Psi\Bigl(\sum_{j\in \mathcal{I}}{\lambda_{j}\delta_{t_{j}}} +\sum_{j\in  \mathcal{I}^{c}}{ \lambda_{j}}\delta_{t_{j}} +
 \bigl(1-\sum_{j=1}^{N}{\lambda_{j}}\bigr)\delta_{t_{*}}\Bigr)\notag\\
&=& \Bigl(\sum_{j\in \mathcal{I}}{\lambda_{j}t_{j}^{i}}+\sum_{j\in  \mathcal{I}^{c}}{ \lambda_{j}}t_{j}^{i} + (1-\sum_{j=1}^{N}{\lambda_{j})t_{*}^{i}}\Bigr)_{i=1}^{n}\, \notag
\end{eqnarray}
where we note that the first sum $\sum_{j\in\mathcal{I}}{\lambda_{j}t_{j}^{i}}$ is over the interior points
and the second $\sum_{j\in \mathcal{I}^{c}}{\lambda_{j}t_{j}^{i}}$ over the endpoints which are fixed.

The dimension of the domain $\Lambda^{N} \times \mathring{T}_{*}$ is clearly
 $N+|\mathcal{I}|$. However, one can easily show that
\[N+|\mathcal{I}|=2\bar{N},\]
 so that in the case $\bar{N}=\frac{n-1}{2}$, it follow that the dimension of this subdomain
is $N+|\mathcal{I}|=2\bar{N}=n-1 < n$. Consequently, the image of this subdomain under the map $\phi$ has zero volume
in $M^{n}$. Since the domain corresponding to index $\frac{n-1}{2}$ is a disjoint union of two such
subdomains, the assertion is proved.
Moreover, the subset consisting of sequences which cover $t_{*}$ also clearly has zero volume, so the constraint that the sequences not cover $t_{*}$ can also be removed.

In conclusion, we can represent the volume $Vol(M^{n})$ using the
the representation
\begin{equation}
\label{canonicalrep7}
 \sum_{j=1}^{N}{\lambda_{j}t^{i}_{j}} +\bigl(1-\sum_{j=1}^{N}{\lambda_{j}}\bigr)t^{i}_{*},
 \quad i=1,..,n,
\end{equation}
defined on two subdomains corresponding to the two ways that
 the sequence
\[0 \leq t_{1}  < t_{2} < \cdots < t_{N} \leq 1\]
 can have index $\frac{n}{2}$.
That is,
when $n$ is even,
 one subdomain corresponds to not allowing $0$ or $1$  and
the other to including both $0$ and $1$.  When $n$ is odd,
 one subdomain corresponds to  including $0$ and not $1$  and
the other to including $1$ and not $0$.

We now compute the Jacobian determinants.
First consider the odd case, $n=2m-1$.
Then,  sequences of index  $\frac{n}{2}=m-\frac{1}{2}$ split into the lower and upper sequences
\begin{equation*}
\begin{cases}
0=t_{1} < t_{2} < \cdots < t_{m}<1,\\
0 < t_{1} < t_{2} < \cdots < t_{m}= 1\, .
\end{cases}
\end{equation*}
Define
the lower representation
\[ \phi^{c}_{ol}:\Lambda^{m}\times T^{m-1} \rightarrow Int(M^{2m-1})\]   by
\begin{eqnarray}
\label{def_phi_col}
 \phi^{c}_{ol}(\lambda,t;t_{*})
&= &\Psi  \Bigl(\lambda_{0}\delta_{0}+  \sum_{j=1}^{m-1}{\lambda_{j}\delta_{t_{j}}}   +(1-\sum_{j=0}^{m-1}{\lambda_{j}})\delta_{t_{*}}\Bigr)\notag\\
&=& \Bigl(\sum_{j=1}^{m-1}{\lambda_{j}t_{j}^{i}}+ (1-\sum_{j=0}^{m-1}{\lambda_{j})t_{*}^{i}}\Bigr)_{i=1}^{2m-1}
\end{eqnarray}
and the upper
representation \[ \phi^{c}_{ou}:\Lambda^{m}\times T^{m-1} \rightarrow Int(M^{2m-1}) \]
  by
\begin{eqnarray}
\label{def_phi_cou}
 \phi^{c}_{ou}(\lambda,t;t_{*})
&= &\Psi  \Bigl(\lambda_{0}\delta_{1}+  \sum_{j=1}^{m-1}{\lambda_{j}\delta_{t_{j}}}   +(1-\sum_{j=0}^{m-1}{\lambda_{j}})\delta_{t_{*}}\Bigr)\notag\\
&=& \Bigl(\lambda_{0}+\sum_{j=1}^{m-1}{\lambda_{j}t_{j}^{i}}+ (1-\sum_{j=0}^{m-1}{\lambda_{j})t_{*}^{i}}\Bigr)_{i=1}^{2m-1}\, .
\end{eqnarray}
\begin{prop}
\label{prop_Jco}
When $n=2m-1$, for $t_{*} \in (0,1)$, the Jacobian determinants are
\[ |det(d\phi^{c}_{ol})|(\lambda,t;t_{*})= \mathcal{J}^{c}_{ol}(t_{*},t)
\prod_{j=1}^{m-1}{\lambda_{j}} \]
\[ |det(d\phi^{c}_{ou})|(\lambda,t;t_{*})= \mathcal{J}^{c}_{ou}(t_{*},t)
\prod_{j=1}^{m-1}{\lambda_{j}} \]
where
\[\mathcal{J}^{c}_{ol}(t_{*},t):=
 t_{*} \prod_{j=1}^{m-1}{(t_{j}-t_{*})^{2}}\prod_{j=1}^{m-1}{t_{j}^{2}}
\cdot \Delta_{m-1}^{4}(t)
\]
\[
\mathcal{J}^{c}_{ou}(t_{*},t):=
 \bigl(1-t_{*}\bigr) \prod_{j=1}^{m-1}{(t_{j}-t_{*})^{2}}
\prod_{j=1}^{m-1}{(1-t_{j})^{2}}
\cdot \Delta_{m-1}^{4}(t)
\]
Moreover,
\begin{equation}
\label{Jcsymm}
\mathcal{J}^{c}_{ou}(t_{*},t)=\mathcal{J}^{c}_{ol}(1-t_{*},1-t)\, .
\end{equation}

\end{prop}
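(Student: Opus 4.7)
\textbf{Proof plan for Proposition \ref{prop_Jco}.} I will compute $|\det(d\phi^{c}_{ol})|$ by a sequence of column operations that reduce the Jacobian to a classical confluent Vandermonde matrix, and then obtain $\mathcal{J}^{c}_{ou}$ together with the symmetry \eqref{Jcsymm} from the reflection $t\mapsto 1-t$. Set $u(t):=(t^{i})_{i=1}^{2m-1}$ and $v(t):=(t^{i-1})_{i=1}^{2m-1}$, so that $u(t)=t\,v(t)$ and $u'(t)=v(t)+t\,v'(t)$. Differentiating \eqref{def_phi_col} in the variables $(\lambda_{0},\ldots,\lambda_{m-1},t_{1},\ldots,t_{m-1})$ shows that the columns of $d\phi^{c}_{ol}$ are $-u(t_{*})$ (for $\lambda_{0}$), $u(t_{j})-u(t_{*})$ (for $\lambda_{j}$, $j\geq 1$), and $\lambda_{j}u'(t_{j})$ (for $t_{j}$). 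I pull $\prod_{j=1}^{m-1}\lambda_{j}$ out of the derivative columns and subtract the $\lambda_{0}$-column from each $\lambda_{j}$-column, which, up to an overall sign absorbed into the absolute value, leaves the $\lambda_{j}$-columns equal to $u(t_{j})$ while keeping the first column equal to $-u(t_{*})$.

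Next, using $u=tv$, I factor $t_{*}$ out of the first column and $t_{j}$ out of each $u(t_{j})$ column; then, for each $j\geq 1$, I subtract the newly produced $v(t_{j})$ column from the $u'(t_{j})$ column to leave $t_{j}v'(t_{j})$, and factor another $t_{j}$. This produces the scalar prefactor $t_{*}\prod_{j=1}^{m-1}t_{j}^{2}$ times the determinant of the matrix $M'$ whose columns are $v(t_{*}),v(t_{1}),\ldots,v(t_{m-1}),v'(t_{1}),\ldots,v'(t_{m-1})$. The matrix $M'$ is, up to a column permutation that does not affect $|\det|$, the confluent Vandermonde of order $2m-1$ associated with the nodes $t_{*}$ (multiplicity $1$) and $t_{1},\ldots,t_{m-1}$ (each of multiplicity $2$). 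The classical confluent Vandermonde formula then yields $|\det M'|=\prod_{j=1}^{m-1}(t_{j}-t_{*})^{2}\cdot\Delta_{m-1}^{4}(t)$, all factorial prefactors being $1$ since no multiplicity exceeds $2$. Combining all factors gives the stated formula for $\mathcal{J}^{c}_{ol}$.

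For $\phi^{c}_{ou}$ and the identity \eqref{Jcsymm}, I invoke the involution $t\mapsto 1-t$. Under this map the moment vector of degree $n$ transforms by a triangular linear map on $\R^{n}$ with $\pm 1$ on the diagonal, hence of absolute determinant $1$, and the lower canonical configuration (support containing $0$) is carried to the upper one (support containing $1$) via $t_{*}\mapsto 1-t_{*}$ and $t_{j}\mapsto 1-t_{j}$, restoring the increasing order of the reflected $t_{j}$ at the cost of only a sign. Consequently $|\det d\phi^{c}_{ou}|(\lambda,t;t_{*})=|\det d\phi^{c}_{ol}|(\lambda,1-t;1-t_{*})$, and substituting the formula for $\mathcal{J}^{c}_{ol}$, while noting $\Delta_{m-1}^{4}(1-t)=\Delta_{m-1}^{4}(t)$, simultaneously delivers the expression for $\mathcal{J}^{c}_{ou}$ and the identity \eqref{Jcsymm}. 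The main obstacle is the bookkeeping: keeping the factors of $\lambda_{j}$, $t_{j}$, $t_{*}$ and signs aligned through the chain of column operations and factorings; the confluent Vandermonde identification itself is standard.
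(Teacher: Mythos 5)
Your proof is correct. The column operations that reduce $d\phi^{c}_{ol}$ to a confluent Vandermonde of order $2m-1$ are sound, the multiplicities $(1,2,\ldots,2)$ give exactly the asserted factors (with trivial factorial prefactors), and the derivation of $\mathcal{J}^{c}_{ou}$ and the symmetry \eqref{Jcsymm} via the involution $t\mapsto 1-t$ is valid, since the induced linear transformation on truncated Hausdorff moments is unitriangular with $\pm 1$ on the diagonal and the reordering of the reflected $t_{j}$ only contributes a sign.

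Your route differs from the paper's in two bookkeeping respects. First, the paper treats $\phi^{c}_{ol}$ and $\phi^{c}_{ou}$ simultaneously by introducing a parameter $t_{0}\in\{0,1\}$ (so $\phi_{0}=\phi^{c}_{ol}$, $\phi_{1}=\phi^{c}_{ou}$) and computes one unified Jacobian $\mathcal{J}_{t_{0}}$; the symmetry \eqref{Jcsymm} then drops out by inspection of the two resulting formulas. You instead compute only the lower case directly and obtain the upper case and the symmetry in one stroke from the reflection $t\mapsto 1-t$; this is more conceptual and avoids a second computation, but relies on the observation that $\Psi$ conjugates to a unimodular map under the reflection, which your argument makes explicit. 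Second, where you reduce to a confluent Vandermonde and invoke its classical evaluation, the paper effectively re-derives the needed special case of that formula: it converts the difference columns $t_{j}^{i}-t_{*}^{i}$ and the derivative columns $it_{j}^{i-1}$ into a bordered $2m\times 2m$ Vandermonde with auxiliary variables $s_{j}$, recognizes $\mathcal{J}_{t_{0}}$ as the mixed partial $\frac{\partial^{m-1}}{\partial s_{1}\cdots\partial s_{m-1}}\Delta(t_{*},t_{0},t_{1},s_{1},\ldots,t_{m-1},s_{m-1})\big|_{s=t}$, factors the Vandermonde, and finishes with the Karlin--Shapley identity \eqref{karlinstudden}. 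The two evaluations are of course equivalent (the mixed partial of a Vandermonde at coincident nodes \emph{is} the confluent Vandermonde), so the substance is the same; your version buys brevity by quoting a known determinant, the paper's version buys self-containment and a single calculation covering both endpoint configurations.
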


Now consider the even case, $n=2m$.
Then  sequences of index  $\frac{n}{2}=m$ split into the lower and upper  sequences
\begin{equation*}
\begin{cases}
0 < t_{1} < t_{2} < \cdots < t_{m}< 1\\
0=t_{1} < t_{2} < \cdots < t_{m+1}=1,
\end{cases}
\end{equation*}
Therefore, we define the lower representation
\[ \phi^{c}_{el}:\Lambda^{m}\times T^{m} \rightarrow Int(M^{2m})\]  by
\begin{eqnarray}
\label{def_phi_cel}
 \phi^{c}_{el}(\lambda,t;t_{*})
&= &\Psi  \Bigl( \sum_{j=1}^{m}{\lambda_{j}\delta_{t_{j}}}+(1-\sum_{j=1}^{m}{\lambda_{j}})\delta_{t_{*}}\Bigr)\notag\\
&=& \Bigl(\sum_{j=1}^{m}{\lambda_{j}t_{j}^{i}}+ (1-\sum_{j=1}^{m}{\lambda_{j})t_{*}^{i}}\Bigr)_{i=1}^{2m}\, .
\end{eqnarray}
and the upper representation
\[ \phi^{c}_{eu}:\Lambda^{m+1}\times T^{m-1} \rightarrow Int(M^{2m})\]  by
\begin{eqnarray}
\label{def_phi_ceu}
 \phi^{c}_{eu}(\lambda,t;t_{*})
&= &\Psi  \Bigl(\lambda_{0}\d_{0}+ \sum_{j=1}^{m-1}{\lambda_{j}\delta_{t_{j}}}+\lambda_{m}\d_{1} +(1-\sum_{j=0}^{m}{\lambda_{j}})\delta_{t_{*}}\Bigr)\notag\\
&=& \Bigl(\sum_{j=1}^{m-1}{\lambda_{j}t_{j}^{i}}+\lambda_{m}+ (1-\sum_{j=0}^{m}{\lambda_{j})t_{*}^{i}}\Bigr)_{i=1}^{2m}\, .
\end{eqnarray}

\begin{prop}
\label{prop_Jce}
When $n=2m$, for $t_{*} \in (0,1)$, the Jacobian determinants are
 \[|det(d\phi^{c}_{el})|(\lambda,t;t_{*})= \mathcal{J}^{c}_{el}(t_{*},t)
\prod_{j=1}^{m}{\lambda_{j}}
\]
\[
 |det(d\phi^{c}_{eu})|(\lambda,t;t_{*})= \mathcal{J}^{c}_{eu}(t_{*},t)
\prod_{j=1}^{m-1}{\lambda_{j}}\, ,
\]
where
\[
\mathcal{J}^{c}_{el}(t_{*},t)
:= \prod_{j=1}^{m}{(t_{j}-t_{*})^{2}}\cdot \Delta_{m}^{4}(t)\, .
\]
\[
\mathcal{J}^{c}_{eu}(t_{*},t)
:=
 t_{*}\bigl(1-t_{*}\bigr) \prod_{j=1}^{m-1}{(t_{j}-t_{*})^{2}}\prod_{j=1}^{m-1}{t_{j}^{2}(1-t_{j})^{2}}
\cdot \Delta_{m-1}^{4}(t)\, ,
\]
\end{prop}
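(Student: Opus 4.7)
The plan is to compute both Jacobians directly and then recognize each resulting determinant as a confluent Vandermonde of an auxiliary bordered matrix, exactly in the spirit of the principal-case Propositions \ref{prop_Jpo}--\ref{prop_Jpe}. For $\phi^{c}_{el}$, differentiating entrywise gives
\[ \frac{\partial \phi^{i}}{\partial \lambda_{j}} = t_{j}^{i} - t_{*}^{i}, \qquad \frac{\partial \phi^{i}}{\partial t_{j}} = \lambda_{j}\, i\, t_{j}^{i-1}, \qquad i=1,\ldots,2m,\ j=1,\ldots,m. \]
Pulling the factor $\lambda_{j}$ out of each $t_{j}$-column produces the factor $\prod_{j=1}^{m}\lambda_{j}$ and leaves a $2m\times 2m$ matrix $A$ whose $i$-th row is $\bigl(t_{1}^{i}-t_{*}^{i},\ldots,t_{m}^{i}-t_{*}^{i},\ i\, t_{1}^{i-1},\ldots,i\, t_{m}^{i-1}\bigr)$. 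The key step is to border $A$ into the $(2m+1)\times(2m+1)$ confluent Vandermonde $\tilde A$ whose columns are $t_{*}^{i},\ t_{1}^{i},\ldots,t_{m}^{i},\ i\,t_{1}^{i-1},\ldots,i\,t_{m}^{i-1}$ indexed by $i=0,1,\ldots,2m$; its $0$-th row is $(1,1,\ldots,1,0,\ldots,0)$ with ones at the evaluation columns. Subtracting the $t_{*}$-column of $\tilde A$ from each of the other $m$ evaluation columns reduces the top row to $(1,0,\ldots,0)$ while leaving the lower-right $2m\times 2m$ block equal to $A$, so Laplace expansion yields $|\det A|=|\det \tilde A|$. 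The standard confluent Vandermonde formula applied to the nodes $t_{*}$ (multiplicity $1$) and $t_{1},\ldots,t_{m}$ (multiplicity $2$ each) gives
\[ |\det \tilde A| = \prod_{j=1}^{m}(t_{j}-t_{*})^{1\cdot 2}\cdot\prod_{1\leq j<k\leq m}(t_{k}-t_{j})^{2\cdot 2}\cdot\prod_{j=1}^{m}0!\,1! = \prod_{j=1}^{m}(t_{j}-t_{*})^{2}\,\Delta_{m}^{4}(t), \]
which is $\mathcal{J}^{c}_{el}(t_{*},t)$.

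The case $\phi^{c}_{eu}$ follows the same strategy with two additional fixed evaluation nodes $t=0$ and $t=1$. The coordinates are $(\lambda_{0},\lambda_{1},\ldots,\lambda_{m},t_{1},\ldots,t_{m-1})$ and the new partials $\partial\phi^{i}/\partial\lambda_{0}=-t_{*}^{i}$, $\partial\phi^{i}/\partial\lambda_{m}=1-t_{*}^{i}$ contribute two additional evaluation columns to the bordered Vandermonde, which now has nodes $t_{*},0,1$ (each multiplicity $1$) and $t_{1},\ldots,t_{m-1}$ (each multiplicity $2$). After factoring $\prod_{j=1}^{m-1}\lambda_{j}$ from the $t_{j}$-columns, bordering, and subtracting the $t_{*}$-column from each of the $0$-, $1$- and $t_{j}$-columns, the top row again collapses to $(1,0,\ldots,0)$, so Laplace expansion identifies the Jacobian with the bordered confluent Vandermonde determinant. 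The pair factors $|0-t_{*}|=t_{*}$, $|1-t_{*}|=1-t_{*}$, $|0-t_{j}|^{2}=t_{j}^{2}$, $|1-t_{j}|^{2}=(1-t_{j})^{2}$, $|0-1|=1$, $(t_{j}-t_{*})^{2}$, $(t_{k}-t_{j})^{4}$ together with trivial factorial prefactors $\prod(0!\,1!)=1$ multiply to $t_{*}(1-t_{*})\prod_{j=1}^{m-1}(t_{j}-t_{*})^{2}\prod_{j=1}^{m-1}t_{j}^{2}(1-t_{j})^{2}\,\Delta_{m-1}^{4}(t)=\mathcal{J}^{c}_{eu}(t_{*},t)$.

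The main obstacle I anticipate is essentially bookkeeping rather than substance: tracking signs (which the paper explicitly absorbs into absolute values), verifying that the factorial prefactor equals $1$ at every multiplicity-$2$ node, and carefully ordering the columns of $\tilde A$ so that the column-subtraction step produces precisely the sub-determinant equal (up to sign) to $\det A$. The conceptual heart --- bordering the Jacobian to expose a confluent Vandermonde and then eliminating the added row by column subtractions --- is identical to the standard derivation of the confluent Vandermonde determinant formula and mirrors the argument needed for the odd canonical case (Proposition \ref{prop_Jco}), which is why the two propositions are bundled by the paper under the same proof template.
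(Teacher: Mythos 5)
Your proof is correct and follows essentially the same route as the paper's: compute the partials, factor $\prod\lambda_j$ out of the $t_j$-columns, border the resulting $2m\times 2m$ Jacobian block by the $i=0$ row and the $t_*^i$ column (using the convention $0^0=1$ in the upper case), and recognize the bordered matrix as a confluent Vandermonde on the nodes $\{t_*\}\cup\{t_j\}$ (resp.\ $\{t_*,0,1\}\cup\{t_j\}$). The only cosmetic difference is that you invoke the standard confluent Vandermonde product formula as a black box, whereas the paper reconstructs that same value in place by inserting auxiliary variables $s_1,\ldots,s_m$, using multilinearity and the ordinary Vandermonde factorization, and finishing with the Karlin--Studden differentiation identity \eqref{karlinstudden}; these are two renderings of one and the same computation.
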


Finally, observe that if we extend the canonical representations to be defined for $t_{*}=0,1$
by continuity, we obtain the following relations between the canonical representations evaluated at the
 endpoints and the principal representations:
\begin{eqnarray}
\label{principal_canonical0}
\mathcal{J}^{c}_{ol}(0,t)&\equiv&0\notag\\
\mathcal{J}^{c}_{ou}(0,t)&=&\mathcal{J}^{p}_{ou}(t)\notag\\
\mathcal{J}^{c}_{el}(0,t)&=&\mathcal{J}^{p}_{el}(t)\notag\\
\mathcal{J}^{c}_{eu}(0,t)&\equiv&0\, ,
\end{eqnarray}
\begin{eqnarray}
\label{principal_canonical0_2}
|d\phi^{c}_{ol}(0,t)|&\equiv&0\notag\\
|d\phi^{c}_{ou}(0,t)|&=&|d\phi^{p}_{ou}(t)|\notag\\
|d\phi^{c}_{el}(0,t)|&=&|d\phi^{p}_{el}(t)|\notag\\
|d\phi^{c}_{eu}(0,t)|&\equiv&0\, ,
\end{eqnarray}
\begin{eqnarray}
\label{principal_canonical1}
\mathcal{J}^{c}_{ol}(1,t)&=&\mathcal{J}^{p}_{ou}(t)\notag\\
\mathcal{J}^{c}_{ou}(1,t)&\equiv&0\notag\\
\mathcal{J}^{c}_{el}(1,t)&=&\mathcal{J}^{p}_{eu}(t)\notag\\
\mathcal{J}^{c}_{eu}(1,t)&\equiv&0\, ,
\end{eqnarray}
\begin{eqnarray}
\label{principal_canonical1_2}
|d\phi^{c}_{ol}(1,t)|&=&|d\phi^{p}_{ou}(t)|\notag\\
|d\phi^{c}_{ou}(1,t)|&\equiv&0\notag\\
|d\phi^{c}_{el}(1,t)|&=&|d\phi^{p}_{eu}(t)|\notag\\
|d\phi^{c}_{eu}(1,t)|&\equiv&0 \, .
\end{eqnarray}

\subsection{Change of variables integral representations}
\label{sec_cov}
In Karlin and Shapley's
  \cite[Thm.~15.2]{KarlinShapley} proof
of the Hausdorff moment volume formula  \eqref{karlinshapley}, they used the lower principal representation $\phi^{p}_{ol}$ of \eqref{def_phi_pol}
 when $n$ is odd and
$\phi^{p}_{el}$ of \eqref{def_phi_pel} when $n$ is even combined with the change of variables formula.
To develop this method so that it can be used for the canonical
representations, which are not bijections,
it
 is convenient to proceed in  some generality. To begin, consider  a
 representation
\[ \phi:W \rightarrow Int(M^{n}),\]
where  $W\subset \R^{n}$ is open and  $\phi$ is a continuously differentiable bijection.
Then, since $\phi$ is injective, by the change of variables
formula for injective differentiable mappings whose Jacobian determinant may vanish
 (see e.g.~\cite[Thm.~3.13]{Spivak} combined
with
 Sard's Theorem \cite[Thm.~3.14]{Spivak}), we conclude that
\[Vol\bigl(\phi(W)\bigr)=\int_{W}{|d\phi|}\, .\]
Moreover, since $\phi$ is surjective we have
\[\phi(W)=Int(M^{n})\]
and from  \eqref{boundarynull} we have
\[Vol\bigl(M^{n}\bigr)=Vol\bigl(Int(M^{n})\bigr)\]
 so that we  conclude
\begin{equation}
\label{eq_cov}
Vol\bigl(M^{n}\bigr)=\int_{W}{|d\phi|}\, .
\end{equation}
To compute $Vol\bigl(M^{n}\bigr)$, Karlin and Shapley then evaluated the righthand side
by determining the Jacobian determinant and then evaluating the resulting integral using a Selberg integral formula.

However, more can be done along these lines.
Indeed, applying the full change of variables formula  we obtain
\[
\int_{\phi(W)}{f}=\int_{W}{\bigl(f\circ\phi\bigr) |d\phi|}
\]
for any function $f:\phi(W) \rightarrow \R$ that is integrable over $\phi(W)$. In particular, since $M^{n}$ is compact, it follows using the same reasoning
that was applied above to the case $f\equiv 1$, that for any
bounded measurable function $f:M^{n} \rightarrow \R$ we have
\begin{equation}
\label{id_cov}
\int_{M^{n}}{f}=\int_{W}{\bigl(f\circ\phi\bigr) |d\phi|}\, .
\end{equation}
We now apply this to the
component functions $ q\mapsto q_{i},
i=1,..,n$ on $M^{n}$ where we abuse notation and indicate them by the symbol $q_{i}$. It may be profitable to also consider nonlinear functions such as $q \mapsto q_{i}^{2}$ but we will not do that here.
Then, in this notation,   $q_{i}\circ\phi=\phi^{i}$  and \eqref{id_cov} becomes
\begin{equation}
\label{cov}
\int_{M^{n}}{q_{i}}=\int_{W}{\phi^{i} |d\phi|}
\end{equation}
That is, we have an integral representation of the mean Hausdorff moments.

However, to prove Lemma \ref{lem_mass_sup}, instead of a principal representation, we use a {\em family} of canonical
representations from Section \ref{sec_canonical}. In this case, utilizing the conclusion at
\eqref{canonicalrep7},
the major difference with the previous discussion is that, instead of a single bijection,
there are two continuously differentiable injections
\[\phi_{k}:W_{k}\rightarrow Int(M^{n}),\quad k=1,2\]
that are volume filling in the sense that
\[
Vol\bigl(Int(M^{n})\bigr)= Vol\bigl(\phi_{1}(W_{1})\cup \phi_{2}(W_{2})\bigr)
\]
and
\[
\phi_{1}(W_{1})\cap \phi_{2}(W_{2})=\emptyset  \]
and, instead of $W_{k}, k=1,2$ being  open, there exists open sets $V_{k}\subset W_{k}, k=1,2$
such that
\[   Vol(W_{k})=Vol(V_{k})\, .\]
Then
the analysis above can easily be repeated to conclude that
\begin{equation}
\label{id_cov3}
\int_{M^{n}}{f}=\int_{W_{1}}{f\circ{\phi_{1}} |d\phi_{1}|} +\int_{W_{2}}{f\circ{\phi_{2}} |d\phi_{2}|}
\end{equation}
for  any bounded measurable function $f:M^{n} \rightarrow \R$.
 In particular, we conclude
\begin{equation}
\label{id_cov2}
\int_{M^{n}}{q_{i}}=\int_{W_{1}}{\phi_{1}^{i} |d\phi_{1}|}+\int_{W_{2}}{\phi_{2}^{i} |d\phi_{2}|}\, ,
\end{equation}
our primary integration identity for the mean Hausdorff moments to be used in the next section.

\section{Mean Hausdorff Moments using the Markov-Kre\u{\i}n Representations}
\label{sec_mean}

We are now prepared to derive integral representations of the mean truncated Hausdorff moments
 with respect to the
uniform measure on $M^{n}$ and show that the canonical representations
generate  reproducing kernel identities corresponding to reproducing kernel Hilbert spaces
of $n$-th degree polynomials.
These identities
are used in
Section \ref{sec_newselberg} to derive
biorthogonal systems of Selberg integral formulas.
 The mean moments with respect to many other Selberg-type
densities can also be computed but to keep this presentation simple we will not do that here.

We will use
Selberg's result (see e.g.~\cite{ForresterWarnaar})
\begin{equation}
\label{selbergformula}
S_{n}(\alpha,\beta,\gamma)= \prod_{j=0}^{n-1}{\frac{\Gamma(\alpha+ j\gamma)\Gamma(\beta+j\gamma)
\Gamma(1+(j+1)\gamma)}{\Gamma(\alpha +\beta +(n+j-1)\gamma) \Gamma(1+\gamma)}}
\end{equation}
for the integrals
\begin{equation}
\label{selbergintegral}
S_{n}(\alpha,\beta,\gamma):=
\int_{I^{n}}{ \prod_{j=1}^{n}{t_{j}^{\alpha-1}(1-t_{j})^{\beta-1}}
|\Delta(t)|^{2\gamma}dt}\, ,
\end{equation}
where
$  Re(\alpha)>0, Re(\beta)>0,
Re(\gamma) > -\min{\bigl(\frac{1}{n}, Re(\alpha)/(n-1), Re(\beta)/(n-1) \bigr)} \, .$

We begin with the volume calculation and then proceed to higher moments using the result of the volume
calculation.
The main idea of our approach is the following. Recall from Section \ref{sec_principal} that the lower
and upper principal representations are each  bijections with $Int(M^{n})$ so that the volume $Vol(M^{n})$ can be computed
using  the change of variables result \eqref{eq_cov}. For example, when $n=2m-1$,
         the  lower principal representation
       $\phi^{p}_{ol}$ defined in \eqref{def_phi_pol} and the upper principal representation $\phi^{p}_{ol}$
       defined in \eqref{def_phi_pou}, along with the values of their Jacobian determinants from
       Proposition \ref{prop_Jpo}  produce two different integral representations for $Vol(M^{n})$. Specifically,
       in  the notation for Selberg's formulas \eqref{selbergformula} for
       the integrals \eqref{selbergintegral},  using the identity
$ \int_{\Lambda^{m-1}}{\bigl(1-\sum_{j=1}^{m-1}{\lambda_{j}}\bigr)\prod_{j=1}^{m-1}{\lambda_{j}}d\lambda}
                       =\frac{1}{(2m-1)!}$ ,
        the lower representation yields
       \begin{eqnarray}
\label{vol_Jpol}
       Vol(M^{2m-1})&=& \int_{\Lambda^{m-1} \times T^{m}}{|det(d\phi^{p}_{ol})|}\notag\\
 &=& \Bigl(\int_{\Lambda^{m-1}}{\bigl(1-\sum_{j=1}^{m-1}{\lambda_{j}}\bigr)\prod_{j=1}^{m-1}{\lambda_{j}}d\lambda}\Bigr)
               \int_ {T^{m}}{\mathcal{J}^{p}_{ol}}\notag\\
&=&\frac{1}{(2m-1)!}
               \int_ {T^{m}}{\mathcal{J}^{p}_{ol}}\notag\\
&=&\frac{1}{(2m-1)!m!}
               \int_ {I^{m}}{\mathcal{J}^{p}_{ol}}\\
	       &=& \frac{1}{(2m-1)!m!}
	       \int_ {I^{m}}{\Delta_{m}^{4}(t)dt}\notag\\
&=& \frac{1}{(2m-1)!m!} S_{m}(1,1,2)\, .\notag
		       \end{eqnarray}
	On the other hand,  using the identity
$ \int_{\Lambda^{m}}{\prod_{j=1}^{m-1}{\lambda_{j}}d\lambda}=\frac{1}{(2m-1)!}$, the upper representation $\phi^{p}_{ou}$ yields
	\begin{eqnarray}
\label{vol_Jpou}
	Vol(M^{2m-1})&=& \int_{\Lambda^{m} \times T^{m-1}}{|det(d\phi^{p}_{ou})|}\notag\\
 &=& \Bigl(\int_{\Lambda^{m}}{\prod_{j=1}^{m-1}{\lambda_{j}}d\lambda}\Bigr)\int_ {T^{m-1}}{\mathcal{J}^{p}_{ou}}
                        \notag \\
&=& \frac{1}{(2m-1)!}\int_ {T^{m-1}}{\mathcal{J}^{p}_{ou}}
                        \notag \\
&=& \frac{1}{(2m-1)!(m-1)!}\int_ {I^{m-1}}{\mathcal{J}^{p}_{ou}}\\
		&=& \frac{1}{(2m-1)!(m-1)!} \int_ {I^{m-1}}{\prod_{j=1}^{m-1}{t_{j}^{2}(1-t_{j})^{2}}\cdot \Delta_{m}^{4}(t)dt}\notag\\
&=&\frac{1}{(m-1)!(2m-1)!} S_{m-1}(3,3,2) \, .\notag
				\end{eqnarray}
	Combining the two results \eqref{vol_Jpol} and \eqref{vol_Jpou} we conclude
	the identity
	\[\frac{1}{(m-1)!} S_{m-1}(3,3,2)= \frac{1}{m!} S_{m}(1,1,2)\]
which is  confirmed  through direct calculation.

	In the even case, where $n=2m$,
	we use the representation $\phi^{p}_{el}$ defined in \eqref{def_phi_pel}
	and its Jacobian determinant from
	Proposition \ref{prop_Jpe}, along with the identity
$ \int_{\Lambda^{m}}{\prod_{j=1}^{m}{\lambda_{j}}d\lambda}=\frac{1}{(2m)!}$,
  to conclude that
\begin{equation}
\label{vol_Jpel}
 Vol(M^{2m})=\frac{1}{(2m)!m!}\int_{I^{m}}{\mathcal{J}^{p}_{el}}=\frac{1}{(2m)!m!}S_{m}(3,1,2)\, .
\end{equation}
Using same identity, the upper representation  $\phi^{p}_{eu}$ defined in \eqref{def_phi_peu} yields
\begin{equation}
\label{vol_Jpeu}
 Vol(M^{2m})=\frac{1}{(2m)!m!}\int_{I^{m}}{\mathcal{J}^{p}_{eu}}=\frac{1}{(2m)!m!}S_{m}(1,3,2)\, .
\end{equation}
Equating the two we conclude
that
\[S_{m}(1,3,2)=S_{m}(3,1,2)\]
which  is well known from the symmetry of the Selberg formula in its first two arguments, and corresponds to
 the change of variables
$t \mapsto 1-t$.
Consequently, we see how two different integral representations of the volume $Vol(M^{n})$ generate
identities.

However, the canonical representations form a {\em one parameter family} of  representations
 of $Int(M^{n})$ and the value $Vol(M^{n})$ expressed in terms
of the resulting one parameter family of integrals produces more interesting results.  To see this,
consider the odd case $n=2m-1$, and the volume filling pair of representations
$\phi^{c}_{ol}$ and $\phi^{c}_{ou}$ defined in \eqref{def_phi_col} and \eqref{def_phi_cou}  with
Jacobian determinants evaluated in Proposition \ref{prop_Jco}.  Apply the modified change of variables formula
\eqref{id_cov2} in terms of these two representations, and the
 identity
$ \int_{\Lambda^{m}}{\prod_{j=1}^{m-1}{\lambda_{j}}d\lambda}=\frac{1}{(2m-1)!}$,  to obtain
\begin{eqnarray}
\label{eq_odd_sum0}
Vol(M^{2m-1})&=&\int_{\Lambda^{m}\times T^{m-1}}{|d\phi^{c}_{ol}|}+
\int_{\Lambda^{m}\times T^{m-1}}{|d\phi^{c}_{ou}|}\notag\\
&=&\int_{\Lambda^{m}}{\prod_{j=1}^{m-1}{\lambda_{j}}d\lambda}\int_{T^{m-1}}{\mathcal{J}^{c}_{ol}}+\int_{\Lambda^{m}}{\prod_{j=1}^{m-1}{\lambda_{j}}d\lambda}\int_{T^{m-1}}{\mathcal{J}^{c}_{ou}}\notag\\
&=&\frac{1}{(2m-1)!}\int_{T^{m-1}}{\bigl(\mathcal{J}^{c}_{ol}+\mathcal{J}^{c}_{ou}\bigr)}\notag\\
&=&\frac{1}{(2m-1)!(m-1)!}\int_{I^{m-1}}{\bigl(\mathcal{J}^{c}_{ol}+\mathcal{J}^{c}_{ou}\bigr)}\, .\notag
\end{eqnarray}
Therefore, showing the parameters,  we conclude that for $t_{*} \in (0,1)$ we have
\begin{equation}
\label{vol_Jco}
Vol(M^{2m-1}) =\frac{1}{(2m-1)!(m-1)!} \int_{I^{m-1}}{\bigl(\mathcal{J}^{c}_{ol}(t_{*},t)+
\mathcal{J}^{c}_{ou}(t_{*},t)\bigr)dt}\, .
\end{equation}
Since the identity \eqref{vol_Jco} holds for all  $t_{*} \in (0,1)$ it generates integral identities.
For the first, since the integrand is continuous in $t_{*}$ we can set $t_{*}=0$ to obtain
\[Vol(M^{2m-1}) =\frac{1}{(2m-1)!(m-1)!} \int_{I^{m-1}}{\bigl(\mathcal{J}^{c}_{ol}(0,t)+\mathcal{J}^{c}_{ou}(0,t)\bigr)dt}\, ,\]
but from Propositions \ref{prop_Jco} we have
\[\mathcal{J}^{c}_{ol}(0,t)\equiv 0\]
and from \eqref{principal_canonical0}
\[
\mathcal{J}^{c}_{ou}(0,t)=
\mathcal{J}^{p}_{ou}(t)
\]
so that we obtain
\[Vol(M^{2m-1}) =\frac{1}{(2m-1)!(m-1)!} \int_{I^{m-1}}{\mathcal{J}^{p}_{ou}(t)dt}\, ,\]
which we already knew from  the volume calculation using the principal representation
 \eqref{vol_Jpou}.
However, if we compute the first order differential invariant by differentiating \eqref{vol_Jco}
with respect to $t_{*}$ at $t_{*}=0$ we obtain the first integral formula of Theorem \ref{thm_selberg}.

Now consider the even case $n=2m$, and the volume filling pair of representations
$\phi^{c}_{el}$ and $\phi^{c}_{eu}$ defined in \eqref{def_phi_cel} and \eqref{def_phi_ceu}  with
Jacobian determinants evaluated in Proposition \ref{prop_Jce}.  Apply the modified change of variables formula
\eqref{id_cov2} in terms of these two representations, and
 the identities $ \int_{\Lambda^{m}}{\prod_{j=1}^{m}{\lambda_{j}}d\lambda}=\frac{1}{(2m)!}$
 and
$\int_{\Lambda^{m+1}}
{\prod_{j=1}^{m-1}{\lambda_{i}}d\lambda} = \frac{1}{(2m)!}$
 to obtain
\begin{eqnarray}
\label{eq_even_sum0}
Vol(M^{2m})&=&\int_{\Lambda^{m}\times T^{m} }{|d\phi^{c}_{el}|}+
\int_{\Lambda^{m+1}\times T^{m-1}}{|d\phi^{c}_{eu}|}\notag\\
&=&\Bigl(\int_{\Lambda^{m}}{\prod_{j=1}^{m}{\lambda_{j}}d\lambda}\Bigr)\int_{T^{m}}{\mathcal{J}^{c}_{el}}+
\Bigl(\int_{\Lambda^{m+1}}{\prod_{j=1}^{m-1}{\lambda_{j}}d\lambda}\Bigr)\int_{T^{m-1}}{\mathcal{J}^{c}_{eu}}\notag\\
&=&\frac{1}{(2m)!}\int_{T^{m}}{\mathcal{J}^{c}_{el}}+
\frac{1}{(2m)!}\int_{T^{m-1}}{\mathcal{J}^{c}_{eu}}\notag\\
&=&\frac{1}{(2m)!m!}\int_{I^{m}}{\mathcal{J}^{c}_{el}}+
\frac{1}{(2m)!(m-1)!}\int_{I^{m-1}}{\mathcal{J}^{c}_{eu}}\, .\notag
\end{eqnarray}
Showing the parameters,  we conclude that for $t_{*} \in (0,1)$ we have
\begin{equation}
\label{vol_Jce}
Vol(M^{2m})=\frac{1}{(2m)!m!}\int_{I^{m}}{\mathcal{J}^{c}_{el}(t_{*},t)dt}+
\frac{1}{(2m)!(m-1)!}\int_{I^{m-1}}{\mathcal{J}^{c}_{eu}(t_{*},t)dt}\, .
\end{equation}
Setting $t_{*}=0$ and using
\[
\mathcal{J}^{c}_{eu}(0,t)
\equiv 0
\]
from Proposition \ref{prop_Jce}
and
\[
\mathcal{J}^{c}_{el}(0,t)
= \mathcal{J}^{p}_{el}(t)
\]
from \eqref{principal_canonical0},  we obtain
\begin{eqnarray}
Vol(M^{2m})&=&\frac{1}{(2m)!m!}\int_{I^{m}}{\mathcal{J}^{c}_{el}(0,t)dt}+
\frac{1}{(2m)!(m-1)!}\int_{I^{m-1}}{\mathcal{J}^{c}_{eu}(0,t)dt}\notag\\
&=&\frac{1}{(2m)!m!}\int_{I^{m}}{\mathcal{J}^{p}_{el}(t)dt}\notag\\
&=&\frac{1}{(2m)!m!}\int_{I^{m}}{\prod_{j=1}^{m}{t_{j}^{2}}
\cdot \Delta_{m}^{4}(t)dt}\notag\\
&=&\frac{1}{(2m)!m!}S_{m}(3,1,2)\notag
\end{eqnarray}
which we  alread knew from  the volume calculation using the principal representation
\eqref{vol_Jpel}.
However, if we compute the first order differential invariant by differentiating \eqref{vol_Jce}
with respect to $t_{*}$ at $t_{*}=0$ we obtain the second integral formula of Theorem \ref{thm_selberg}.

We can now proceed to compute the mean of the  moments
with respect to the uniform measure on $M^{n}$ using
 the volume identities \eqref{vol_Jpol}, \eqref{vol_Jpou}, \eqref{vol_Jpel}, \eqref{vol_Jpeu}
 from the principal representations and
  \eqref{vol_Jco} and \eqref{vol_Jce} from the canonical representations.
From the identities \eqref{principal_canonical0}, \eqref{principal_canonical0_2},\eqref{principal_canonical1}, \eqref{principal_canonical1_2} connecting the canonical representations at the endpoints and the principal representations, it is clear that we can generate the integral formula for the mean moments corresponding to all the principal representations except $\phi^{p}_{ol}$ by doing so using the canonical representations and then evaluating the result at the endpoints. Therefore, we move directly to the
canonical representations.
Let $\d_{0}$ denote the indicator function defined by $\d_{0}(i)=1, i=0$ and  $\d_{0}(i)=0$ otherwise.
Using the convention that $0^{0}:=1$, the following proposition utilizes the volume equalities
 \eqref{vol_Jco} and \eqref{vol_Jce} to simultaneously expresses themselves and the moment equalities
generated by the canonical representations.
For a function $\phi:I \rightarrow \R$ we
 define the diagonal extension
$\Sigma \phi:I^{N} \rightarrow \R$
by
\[\bigl(\Sigma\phi\bigr)(t):=\sum_{j=1}^{N}{\phi(t_{j})},\quad t\in I^{N}\, .\]
For simple powers, we introduce the notation
\[\Sigma t^{i}:=\sum_{j=1}^{N}{t_{j}^{i}}\] for the power sum and note the important example
\[\Sigma t^{-1}:=\sum_{j=1}^{N}{t_{j}^{-1}}\]
that will be used in the Selberg integral formulas of Theorem \ref{thm_selberg}.
\begin{prop}
\label{prop_mom}
Let $n=2m-1$. Then for all $t_{*}\in I$  and $i=0,1,.., 2m-1$ we have
\begin{eqnarray}
\label{eq_odd_cov5a}
&&\int_{M^{2m-1}}{q_{i}}-\frac{t_{*}^{i}}{2m} Vol(M^{2m-1})\notag\\
&=&\frac{\d_{0}(i)}{(2m)!(m-1)!}\int_{I^{m-1}}
{\mathcal{J}^{c}_{ol}(t_{*},t)dt}+\frac{1}{(2m)!(m-1)!}\int_{I^{m-1}}
{\mathcal{J}^{c}_{ou}(t_{*},t)dt}\notag\\
 &+&\frac{2}{(2m)!(m-1)!}\int_{I^{m-1}}{\Sigma t^{i}
\Bigl(\mathcal{J}^{c}_{ol}(t_{*},t)+\mathcal{J}^{c}_{ou}(t_{*},t)\Bigr)dt}\, .\notag
\end{eqnarray}
  Let $n=2m$. Then for all $t_{*}\in I$  and $i=0,1,.., 2m$ we have
\begin{eqnarray}
&&\int_{M^{2m}}{q_{i}}- \frac{t_{*}^{i}}{2m+1}Vol(M^{2m})\notag\\
&=&\frac{\d_{0}(i)+1}{(2m+1)!(m-1)!}\int_{I^{m-1}}{
\mathcal{J}^{c}_{eu}(t_{*},t)dr}\notag\\
&+&\frac{2}{(2m+1)!m!}\int_{I^{m}}{\Sigma t^{i}\mathcal{J}^{c}_{el}(t_{*},t)dt}
+
\frac{2}{(2m+1)!(m-1)!}\int_{I^{m-1}}{\Sigma t^{i}
\mathcal{J}^{c}_{eu}(t_{*},t)dt}\, .\notag
\end{eqnarray}
\end{prop}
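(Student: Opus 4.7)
The plan is to apply the change-of-variables identity \eqref{id_cov2} to the coordinate function $f(q) = q_i$, using the two canonical representations $(\phi^{c}_{ol}, \phi^{c}_{ou})$ when $n = 2m-1$ and $(\phi^{c}_{el}, \phi^{c}_{eu})$ when $n = 2m$, together with the Jacobian determinants computed in Propositions \ref{prop_Jco} and \ref{prop_Jce}. The $i$-th component of each $\phi^{c}$ splits, by inspection of its definition, into three kinds of summands: (a) fixed-endpoint contributions $\lambda_0 \cdot 0^i$ (lower) or $\lambda_0 \cdot 1^i$ (upper), plus $\lambda_m \cdot 1^i$ in the upper even case — using the convention $0^0 = 1$, these produce precisely the $\delta_0(i)$ terms (for $0^i$) and the $+1$ augmentation in the even case (from $1^i$); (b) an interior contribution $\sum_{j} \lambda_j t_j^i$; (c) the free-parameter contribution $(1 - \sum_j \lambda_j) t_{*}^i$.

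The second step is to apply Fubini and perform the $\lambda$-integrals separately from the $t$-integrals. Since the Jacobians factor as $\mathcal{J}^{c}(t_{*},t) \cdot \prod_j \lambda_j$ (with the product ranging over the indices stated in Propositions \ref{prop_Jco} and \ref{prop_Jce}), each $\lambda$-integral is a standard Dirichlet integral
\[
\int_{\Lambda^{N}} \prod_{j} \lambda_j^{a_j - 1} \bigl(1 - \textstyle\sum_j \lambda_j\bigr)^{b-1}\, d\lambda
\;=\;
\frac{\prod_j \Gamma(a_j)\,\Gamma(b)}{\Gamma(\sum_j a_j + b)}.
\]
A routine tabulation then gives the coefficients required: in the odd case each $\lambda_k$-linear factor ($k \geq 1$) yields $2/(2m)!$ while the factors $\lambda_0$, $\lambda_m$, and $(1-\sum \lambda_j)$ each yield $1/(2m)!$; in the even case the corresponding factors yield $2/(2m+1)!$ and $1/(2m+1)!$ respectively. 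The $t$-integrand $\mathcal{J}^{c}$ together with the interior power-sum $\sum_k t_k^i$ is symmetric in the $t$-variables, so the $T^N$-integrals convert to $I^N$-integrals via the usual factor $1/N!$, producing the denominators $m!,(m-1)!$ that appear in the statement.

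The last step is to identify and isolate the $t_{*}^i$ terms. Summing the contributions of type (c) from both representations and using the volume formulas \eqref{vol_Jco} and \eqref{vol_Jce}, the combined coefficient of $t_{*}^i$ simplifies to $\tfrac{1}{n+1}\,Vol(M^{n})$ — in the odd case because $\tfrac{1}{(2m)!(m-1)!}\int_{I^{m-1}}(\mathcal{J}^{c}_{ol}+\mathcal{J}^{c}_{ou})\,dt = \tfrac{1}{2m}\,Vol(M^{2m-1})$, and in the even case by the analogous combination. Transposing this term to the left yields exactly the stated identity, with the remaining right-hand side already in the desired form after combining the type (a) and type (b) summands. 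The principal obstacle is the bookkeeping: the upper even representation $\phi^{c}_{eu}$ carries two distinct fixed-endpoint masses ($\delta_0$ and $\delta_1$) while its Jacobian product $\prod_{j=1}^{m-1}\lambda_j$ misses both $\lambda_0$ and $\lambda_m$, so one must keep track of which Dirichlet shape parameters $a_j \in \{1,2,3\}$ appear for each of the several contributions and verify that the various factorial combinations collapse into the compact form $(\delta_0(i)+1)/(2m+1)!(m-1)!$ and $2/(2m+1)!\,m!$, $2/(2m+1)!(m-1)!$ stated in the proposition.
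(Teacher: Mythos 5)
Your proposal is correct and follows essentially the same route as the paper's proof: apply the change-of-variables identity \eqref{id_cov2} with the canonical representations, factor the Jacobians as $\mathcal{J}^c\cdot\prod\lambda_j$, perform the $\lambda$-integrals as Dirichlet integrals, symmetrize from $T^N$ to $I^N$, and then subtract the $t_*^i$-contribution using the volume identities \eqref{vol_Jco} and \eqref{vol_Jce}. The only (cosmetic) difference is that you handle $i=0$ uniformly by reading $\lambda_0 \cdot 0^i$ as $\lambda_0\,\delta_0(i)$ under the $0^0=1$ convention, whereas the paper computes the $\lambda$-integrals for $i\ge 1$, treats $i=0$ separately via the volume formula, and then reassembles with the $\delta_0(i)$ indicator; the Dirichlet coefficient tabulation you describe ($2/(2m)!$ and $1/(2m)!$, resp.\ $2/(2m+1)!$ and $1/(2m+1)!$) and the observation that the type-(c) contributions sum to $\tfrac{t_*^i}{n+1}\,Vol(M^n)$ are exactly the computations the paper carries out.
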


The above technique of comparing two representations of the same volume to generate identities we now
apply to the higher order moments with respect to the uniform measure on the moment space (the moment moments) by simply  subtracting the integral representations  of
Proposition \ref{prop_mom} evaluated at $t_{*}=0$ from that with arbitrary $t_{*} \in I$. We now show
that this  procedure produces a reproducing kernel identity on the space of polynomials.

\section{The Canonical Representations and Reproducing Kernel Hilbert Spaces of Polynomials}
\label{sec_RKHS}
The integral representations  of
Proposition \ref{prop_mom} show clear signs of the existence of reproducing kernel identities  of the form
\[f(x)=\int{K(x,y)f(y)dy},\quad  f \in H, \, x \in X\] since, in the odd case, the integrand on the righthand side
$\Sigma t^{i}$
is integrated against a kernel $\mathcal{J}^{c}_{ol}(t_{*},t)+\mathcal{J}^{c}_{ou}(t_{*},t)$  and produces a multiple of $t_{*}^{i}$ plus some terms. Reproducing kernel Hilbert spaces are Hilbert spaces of functions such that
pointwise evaluation is continuous on the Hilbert space. They have remarkable properties, in particular,
the reproducing kernel identities which can be thought of like an abstract Cauchy integral formula from complex analysis.

Let us present Proposition \ref{prop_mom} in reproducing kernel form. To that end,
define
\begin{equation}
\label{def_H}
\mathcal{H}(t_{*},t):=\mathcal{J}^{c}_{ou}(0,t)-\mathcal{J}^{c}_{ol}(t_{*},t)-\mathcal{J}^{c}_{ou}(t_{*},t),
\end{equation}
and note that from \eqref{principal_canonical0} we have $\mathcal{J}^{c}_{ou}(0,t)=\mathcal{J}^{p}_{ou}(t)$, so that
\[\mathcal{H}(t_{*},t)=\mathcal{J}^{p}_{ou}(t)-\mathcal{J}^{c}_{ol}(t_{*},t)-\mathcal{J}^{c}_{ou}(t_{*},t)\, .\]
Moreover, observe that the symmetries
\begin{eqnarray*}
\mathcal{J}^{p}_{ol}(1-t)&=&\mathcal{J}^{p}_{ol}(t)\\
\mathcal{J}^{p}_{ou}(1-t)&=&\mathcal{J}^{p}_{ou}(t)\\
\mathcal{J}^{c}_{ou}(t_{*},t)&=&\mathcal{J}^{c}_{ol}(1-t_{*},1-t)
\end{eqnarray*}
of \eqref{Jposymm} and \eqref{Jcsymm} combined with $\mathcal{J}^{c}_{ol}(0,t)\equiv 0$ imply that
\begin{eqnarray}
\label{H_dirichlet}
\mathcal{H}(0,t)& \equiv& 0\notag\\
\mathcal{H}(1,t)& \equiv & 0\, .
\end{eqnarray}
and
\begin{equation}
\label{H_symm}
\mathcal{H}(1-t_{*},1-t) =\mathcal{H}(t_{*},t)\, .
\end{equation}

Let  $\Pi^{n}$ denote the space of $n$-th degree polynomials in one variable with real coefficients.
\begin{thm}
\label{thm_RK}
For all $\phi \in \Pi^{2m-1}$ we have
\begin{eqnarray}
\phi(t_{*}) Vol(M^{2m-1})
&=&\frac{2}{(2m-1)!(m-1)!}\int_{I^{m-1}}{(\Sigma \phi)(t)\mathcal{H}(t_{*},t)dt}\notag\\
&+&\frac{\phi(0)}{(2m-1)!(m-1)!}\int_{I^{m-1}}
{\mathcal{J}^{c}_{ou}(t_{*},t)dt}
\\&+&\frac{\phi(1)}{(2m-1)!(m-1)!}\int_{I^{m-1}}
{\mathcal{J}^{c}_{ol}(t_{*},t)dt}\notag
\end{eqnarray}
and for  $\phi \in \Pi^{2m}$ we have
\begin{eqnarray*}
 \phi(t_{*})Vol(M^{2m})
&=&-\frac{2}{(2m)!m!}\int_{I^{m}}{(\Sigma \phi)(t)\bigl(\mathcal{J}^{c}_{el}(t_{*},t)-
\mathcal{J}^{c}_{el}(0,t)\bigr)dt}
\\&-&
\frac{2}{(2m)!(m-1)!}\int_{I^{m-1}}{(\Sigma \phi)(t)
\mathcal{J}^{c}_{eu}(t_{*},t)dt}\notag \\
&+&
\frac{\phi(0)}{(2m)!m!}\int_{I^{m}}{\mathcal{J}^{c}_{el}(t_{*},t)
dt}
- \frac{\phi(1)}{(2m)!(m-1)!}\int_{I^{m-1}}{
\mathcal{J}^{c}_{eu}(t_{*},t)dt}\, .
\notag\\
\end{eqnarray*}

\end{thm}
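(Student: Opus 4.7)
Both sides of each identity are linear in $\phi$, so it suffices to verify the formulas for the monomial basis $\phi(t)=t^i$, $i=0,\ldots,2m-1$ (odd case) or $i=0,\ldots,2m$ (even case). For such $\phi$ we read off $\phi(t_*)=t_*^i$, $\phi(0)=\delta_{0,i}$ (using $0^0=1$), $\phi(1)=1$, and $(\Sigma\phi)(t)=\Sigma t^i$, precisely the ingredients that appear in Proposition~\ref{prop_mom}.

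For fixed $i$, I would apply Proposition~\ref{prop_mom} once at the arbitrary parameter $t_*\in I$ and once at the endpoint $t_*=0$, and subtract. The common term $\int_{M^n}q_i$ cancels, leaving an identity whose left-hand side is $\frac{\delta_{0,i}-t_*^i}{2m}\,Vol(M^{2m-1})$ in the odd case, respectively $\frac{\delta_{0,i}-t_*^i}{2m+1}\,Vol(M^{2m})$ in the even case. On the right-hand side, the canonical kernels at $t_*=0$ collapse onto the principal ones via \eqref{principal_canonical0}: in the odd case $\mathcal{J}^c_{ol}(0,t)\equiv 0$ and $\mathcal{J}^c_{ou}(0,t)=\mathcal{J}^p_{ou}(t)$, and in the even case $\mathcal{J}^c_{eu}(0,t)\equiv 0$ and $\mathcal{J}^c_{el}(0,t)=\mathcal{J}^p_{el}(t)$. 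In the odd case I would then invoke the definition \eqref{def_H} to recognize the combination $\mathcal{J}^p_{ou}-\mathcal{J}^c_{ol}(t_*,\cdot)-\mathcal{J}^c_{ou}(t_*,\cdot)$ as $\mathcal{H}(t_*,t)$; in the even case no repackaging is needed because $\mathcal{J}^c_{el}(t_*,t)-\mathcal{J}^c_{el}(0,t)$ already appears naturally.

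To land on the precise form stated in the theorem, I would add $\delta_{0,i}\,Vol(M^n)$ to both sides and use the volume identities \eqref{vol_Jco} (odd) and \eqref{vol_Jce} (even) to express this offset as an integral against the canonical kernels. After rescaling by $2m$ (odd) or $2m+1$ (even) to absorb the factorials, the coefficients of $\int\mathcal{J}^c_{ou}(t_*,t)\,dt$ and $\int\mathcal{J}^c_{ol}(t_*,t)\,dt$ reduce exactly to $\delta_{0,i}=\phi(0)$ and $1=\phi(1)$ in the odd case, and analogously for the $\mathcal{J}^c_{el}, \mathcal{J}^c_{eu}$ coefficients in the even case. The one remaining wrinkle in the odd case is a residual term $\tfrac{1}{(2m-1)!(m-1)!}\int_{I^{m-1}}\mathcal{H}(t_*,t)\,dt$, which must vanish; it does, since $\int\mathcal{J}^p_{ou}(t)\,dt$ and $\int(\mathcal{J}^c_{ol}+\mathcal{J}^c_{ou})(t_*,t)\,dt$ both equal $(2m-1)!(m-1)!\,Vol(M^{2m-1})$ by \eqref{vol_Jpou} and \eqref{vol_Jco}.

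The analytic content is entirely carried by Proposition~\ref{prop_mom}; the proof is a rearrangement. The main obstacle is the algebraic bookkeeping, namely correctly tracking the factorial prefactors, the $\delta_{0,i}$ corrections arising from $\phi(0)$ on monomials, and the switches between $\mathcal{J}^p$ and $\mathcal{J}^c$ at $t_*=0$, together with the observation that the $\int\mathcal{H}$ residual vanishes. This last point is what makes the theorem hold uniformly in $i$, including the degenerate case $i=0$ where $t_*^i-\delta_{0,i}\equiv 0$ and the subtraction step by itself gives no information.
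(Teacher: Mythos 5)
Your proposal is correct and follows essentially the same route as the paper's proof: reduce to monomials, apply Proposition~\ref{prop_mom} at $t_*$ and at $t_*=0$ and subtract, use the collapses $\mathcal{J}^c_{ol}(0,\cdot)\equiv 0$, $\mathcal{J}^c_{ou}(0,\cdot)=\mathcal{J}^p_{ou}$ (resp.\ $\mathcal{J}^c_{eu}(0,\cdot)\equiv 0$, $\mathcal{J}^c_{el}(0,\cdot)=\mathcal{J}^p_{el}$), repackage via the definition of $\mathcal{H}$, and then fold in the $t_*$-dependent volume identities \eqref{vol_Jco}/\eqref{vol_Jce} together with the vanishing of $\int\mathcal{H}(t_*,\cdot)$. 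The only deviation is a small bookkeeping slip (you say ``add $\delta_{0,i}\,Vol$'' where you mean to remove the $\tfrac{\delta_{0,i}}{2m}\,Vol$ offset via \eqref{vol_Jco}), which does not affect the substance.
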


To integrate out the diagonal extension $\Sigma$,
for any function $ (t_{*},t) \mapsto \mathcal{J}(t_{*},t)$ we let
\[\bar{\mathcal{J}}(t_{*},s):=\int{\mathcal{J}\bigl(t_{*},(s,t_{2},..,t_{N})\bigr)dt_{2}\cdots dt_{N}}\]
denote the marginalization to the first component of $t$.
Now for any such function $\mathcal{J}$, which is invariant under the symmetric group acting on its second variable, we have
\[\int_{I^{N}}{(\Sigma \phi)(t) \mathcal{J}(t_{*},t)dt}=  N \int_{I}{ \phi(s) \bar{\mathcal{J}}(t_{*},s)ds}\]
so that we obtain the following corollary to Theorem \ref{thm_RK}.
Let us define
\[\bar{\mathcal{G}}(t_{*},s):=\bar{\mathcal{J}}^{c}_{el}(0,s)-
\bar{\mathcal{J}}^{c}_{el}(t_{*},s)
-(m-1)
\bar{\mathcal{J}}^{c}_{eu}(t_{*},s) \]
and note that
\[\bar{\mathcal{G}}(0,s)\equiv 0
\]
but
\[ \bar{\mathcal{G}}(1,s)\not \equiv 0\, .\]
Let $\Pi^{n}_{0} \subset \Pi^{n}$ denote the $n$-th degree polynomials $\phi \in  \Pi^{n}$
which vanish on the boundary of $I$, that is, $\phi(0)=\phi(1)=0$.
\begin{cor}
\label{cor_RK}
For all $\phi \in \Pi^{2m-1}$ we have
\begin{eqnarray}
\phi(t_{*}) Vol(M^{2m-1})
&=&\frac{2}{(2m-1)!(m-2)!}\int_{I}{ \phi(s)\bar{\mathcal{H}}(t_{*},s)ds}\notag\\
&+&\frac{\phi(0)}{(2m-1)!(m-2)!}\int_{I}
{\bar{\mathcal{J}}^{c}_{ou}(t_{*},s)ds}
\\&+&\frac{\phi(1)}{(2m-1)!(m-2)!}\int_{I}
{\bar{\mathcal{J}}^{c}_{ol}(t_{*},s)ds}\notag
\end{eqnarray}
and for  $\phi \in \Pi^{2m}$ we have
\begin{eqnarray*}
 \phi(t_{*})Vol(M^{2m})
&=&\frac{2}{(2m)!(m-1)!}\int_{I}{\phi(s)\bar{\mathcal{G}}(t_{*},s)ds}\notag\\
&+&
\frac{\phi(0)}{(2m)!(m-1)!}\int_{I}{\bar{\mathcal{J}}^{c}_{el}(t_{*},s)
ds}
- \frac{\phi(1)}{(2m)!(m-2)!}\int_{I}{
\bar{\mathcal{J}}^{c}_{eu}(t_{*},s)ds}\, .
\notag\\
\end{eqnarray*}
In particular,
for the normalizations
\begin{eqnarray*}
\hat{\bar{\mathcal{H}}}&:=&\frac{1}{Vol(M^{2m-1})}\frac{2}{(2m-1)!(m-2)!}\bar{\mathcal{H}}\notag\\
\hat{\bar{\mathcal{G}}}&:=&\frac{1}{Vol(M^{2m})}\frac{2}{(2m)!(m-1)!}\bar{\mathcal{G}}\, \notag
\end{eqnarray*}
 we have
\begin{eqnarray*}
\label{id_RKHS}
\phi(t_{*})
&=&\int_{I}{ \phi(s)\hat{\bar{\mathcal{H}}}(t_{*},s)ds},\quad \phi \in \Pi^{2m-1}_{0}\notag\\
 \phi(t_{*})
&=&\int_{I}{\phi(s)\hat{\bar{\mathcal{G}}}(t_{*},s)ds}, \quad  \phi \in \Pi^{2m}_{0}\, .
\end{eqnarray*}
\end{cor}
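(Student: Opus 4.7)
\textbf{Plan for the proof of Corollary \ref{cor_RK}.}

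The plan is to obtain the corollary as a direct symmetrization-and-marginalization of Theorem \ref{thm_RK}. The key observation is that each of the Jacobian determinants $\mathcal{J}^{c}_{ol}(t_{*},\cdot),\mathcal{J}^{c}_{ou}(t_{*},\cdot),\mathcal{J}^{c}_{el}(t_{*},\cdot),\mathcal{J}^{c}_{eu}(t_{*},\cdot)$ exhibited in Propositions \ref{prop_Jco} and \ref{prop_Jce} is a \emph{symmetric} function of $t=(t_{1},\dots,t_{N})$: they are built from the symmetric products $\prod_{j}(t_{j}-t_{*})^{2}$, $\prod_{j}t_{j}^{2}(1-t_{j})^{2}$, and the fourth power $\Delta_{N}^{4}(t)$ of the Vandermonde determinant, all of which are invariant under the symmetric group $S_{N}$ acting by permutation of coordinates. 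Consequently the same is true of $\mathcal{H}(t_{*},t)$ from \eqref{def_H} and of $\mathcal{J}^{c}_{el}(0,t)-\mathcal{J}^{c}_{el}(t_{*},t)$, hence of the combinations that appear in the definitions of $\bar{\mathcal{H}}$ and $\bar{\mathcal{G}}$.

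With symmetry in hand, the main lemma is the marginalization identity stated in the paragraph preceding the corollary: for any symmetric function $\mathcal{J}(t_{*},\cdot)$ on $I^{N}$ and any $\phi\in L^{\infty}(I)$,
\begin{equation*}
\int_{I^{N}}(\Sigma\phi)(t)\,\mathcal{J}(t_{*},t)\,dt
=\sum_{j=1}^{N}\int_{I^{N}}\phi(t_{j})\,\mathcal{J}(t_{*},t)\,dt
=N\int_{I}\phi(s)\,\bar{\mathcal{J}}(t_{*},s)\,ds,
\end{equation*}
where the first equality unpacks $\Sigma\phi$, the second uses that all $N$ summands agree by symmetry, and the last is the definition of $\bar{\mathcal{J}}$. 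For the pure (non-$\Sigma\phi$) terms one also needs the simpler identity $\int_{I^{N}}\mathcal{J}(t_{*},t)\,dt=\int_{I}\bar{\mathcal{J}}(t_{*},s)\,ds$, which is just Fubini.

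Next I would substitute into Theorem \ref{thm_RK}. In the odd case, applied with $N=m-1$, the $\Sigma\phi$-integral against $\mathcal{H}$ produces a factor of $m-1$, converting the coefficient $\frac{2}{(2m-1)!(m-1)!}$ into $\frac{2(m-1)}{(2m-1)!(m-1)!}=\frac{2}{(2m-1)!(m-2)!}$; the boundary integrals against $\mathcal{J}^{c}_{ou}$ and $\mathcal{J}^{c}_{ol}$ are reduced to their marginals (after appropriate factorial bookkeeping). In the even case, the two $\Sigma\phi$ contributions from Theorem \ref{thm_RK} — one over $I^{m}$ against $\mathcal{J}^{c}_{el}(t_{*},\cdot)-\mathcal{J}^{c}_{el}(0,t)$ and one over $I^{m-1}$ against $\mathcal{J}^{c}_{eu}$ — become, after marginalization with $N=m$ and $N=m-1$ respectively, multiples of $\int_{I}\phi(s)\bar{\mathcal{J}}^{c}_{el}(\cdots)(t_{*},s)\,ds$ and $\int_{I}\phi(s)(m-1)\bar{\mathcal{J}}^{c}_{eu}(t_{*},s)\,ds$. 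These are precisely the two terms in the definition of $\bar{\mathcal{G}}(t_{*},s)=\bar{\mathcal{J}}^{c}_{el}(0,s)-\bar{\mathcal{J}}^{c}_{el}(t_{*},s)-(m-1)\bar{\mathcal{J}}^{c}_{eu}(t_{*},s)$, so they combine into a single integral against $\bar{\mathcal{G}}$, with the sign arranged so the $\bar{\mathcal{J}}^{c}_{el}(0,s)$ piece absorbs the correction term. This is where the main (if minor) obstacle lies: checking carefully that the combinatorial prefactors and signs line up exactly with the definition of $\bar{\mathcal{G}}$ and with $\bar{\mathcal{G}}(0,s)\equiv 0$, which is the only delicate bookkeeping step.

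Finally, the ``In particular'' reproducing kernel identities are immediate: restricting to $\phi\in\Pi^{2m-1}_{0}$ or $\phi\in\Pi^{2m}_{0}$ kills the $\phi(0)$ and $\phi(1)$ boundary terms, and dividing both sides by $Vol(M^{n})$ and applying the definitions of $\hat{\bar{\mathcal{H}}}$ and $\hat{\bar{\mathcal{G}}}$ yields exactly $\phi(t_{*})=\int_{I}\phi(s)\hat{\bar{\mathcal{H}}}(t_{*},s)\,ds$ and $\phi(t_{*})=\int_{I}\phi(s)\hat{\bar{\mathcal{G}}}(t_{*},s)\,ds$ respectively. No additional analytic input is needed; everything reduces to Fubini plus symmetric-group averaging.
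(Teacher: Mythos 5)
Your proposal reproduces exactly the paper's argument: the corollary is obtained from Theorem \ref{thm_RK} by applying the marginalization identity $\int_{I^{N}}(\Sigma\phi)\,\mathcal{J}\,dt=N\int_{I}\phi\,\bar{\mathcal{J}}\,ds$ (valid by the $S_N$-symmetry of the Jacobians), together with Fubini for the non-$\Sigma\phi$ terms, and the combinatorial prefactors then combine precisely so that $\bar{\mathcal{G}}$ absorbs the weighted difference of $\bar{\mathcal{J}}^{c}_{el}$ and $\bar{\mathcal{J}}^{c}_{eu}$. One small flag on your ``appropriate factorial bookkeeping'' for the boundary terms: the pure $\phi(0)$ and $\phi(1)$ integrals pick up no factor of $N$ under marginalization, so marginalizing Theorem \ref{thm_RK} actually leaves them with denominators $(m-1)!$ (odd case) and $m!$, $(m-1)!$ (even case) rather than the $(m-2)!$, $(m-1)!$, $(m-2)!$ printed in the corollary; this looks like a typo in the stated corollary, but it is harmless for the ``in particular'' reproducing identities since the boundary terms vanish on $\Pi^{n}_{0}$, so your argument for those is sound.
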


Let us now restrict our attention to the odd case and
let $L^{2}(I)$ denote the usual Lebesgue space corresponding to the uniform Borel measure on $I$. Then,
it is well known, see e.g. Saitoh \cite[Thm.~1, Pg.~21]{Saitoh}, that the integral operator
\[ \phi \mapsto \int_{I}{\phi(s)\hat{\bar{\mathcal{H}}}(t_{*},s)ds},\quad \phi \in L^{2}(I)\]
determines a reproducing kernel Hilbert space structure on its range with reproducing kernel
\[\mathcal{K}(r_{1},r_{2}):=\int_{I}{\hat{\bar{\mathcal{H}}}(r_{1},s)\hat{\bar{\mathcal{H}}}(r_{2},s)ds}\, .\]
From the definition \eqref{def_H}
\[\mathcal{H}(t_{*},t):=\mathcal{J}^{c}_{ou}(0,t)-\mathcal{J}^{c}_{ol}(t_{*},t)-\mathcal{J}^{c}_{ou}(t_{*},t)\]
and the definitions of
$\mathcal{J}^{c}_{ol}$ and $\mathcal{J}^{c}_{ou}$
from Proposition \ref{prop_Jco}, it follows that
$\mathcal{H}(\cdot ,t) \in \Pi^{2m-1},\, t \in I^{m-1},$
and therefore  it follows from  \eqref{H_dirichlet} that
$\mathcal{H}(\cdot ,t) \in \Pi^{2m-1}_{0}, \, t \in I^{m-1}.$
 Consequently, by marginalization to $\bar{\mathcal{H}}$ and scalar normalization,
  we have
\begin{equation}
\label{hi_pi}
 \hat{\bar{\mathcal{H}}}(\cdot,s)\in \Pi^{2m-1}_{0}, \quad s \in I\, .
\end{equation}
Therefore the range of this integral operator is contained in $\Pi^{2m-1}_{0}$. However,
it follows from Corollary \ref{cor_RK} that the range is identically $\Pi^{2m-1}_{0}$. Therefore we
conclude that $\Pi^{2m-1}_{0}$ is a reproducing kernel Hilbert space with kernel
$\mathcal{K}$.

Because of Corollary \ref{cor_RK}, one might be tempted to think that this reproducing kernel Hilbert
space structure corresponds to that which $\Pi^{2m-1}_{0}$ inherits as the subspace
$\Pi^{2m-1}_{0} \subset L^{2}(I)$, but this is not the case.  Indeed, let $P_{2m-1}$ denote
the $L^{2}(I)$ orthogonal projection $P_{2m-1}:L^{2}(I) \rightarrow \Pi^{2m-1}_{0}$ and consider
the kernel
\begin{equation}
\label{def_K2m-1}
\mathcal{K}_{2m-1}(r_{1},r_{2}):=\int_{I}{P_{2m-1}\hat{\bar{\mathcal{H}}}(r_{1},s)\cdot P_{2m-1}
\hat{\bar{\mathcal{H}}}(r_{2},s)ds}\,
\end{equation}
where the projections are acting on the kernels in the second component. Then, since this projection makes no
difference in the reproducing identities in  Corollary \ref{cor_RK},  one can show
that $\mathcal{K}_{2m-1}$ is the reproducing kernel associated with $\Pi^{2m-1}_{0} \subset L^{2}(I)$
and since the latter  can  be computed
in terms of  the Legendre polynomials of order $2$  (see e.g.~\cite[Sec.~12.5]{Arfken}) using the
Christoffel-Darboux formula  \cite{Christoffel} (see e.g.~Simon \cite{Simon}
for a more current reference), we conclude an identification of $\mathcal{K}_{2m-1}$ with the
Christoffel-Darboux formula  for the kernel of the  Legendre polynomials of order $2$. That, is
\begin{equation}
\label{def_CD}
\mathcal{K}_{2m-1}(r_{1},r_{2})=\frac{(r_{1}-r^{2}_{1})(r_{2}-r^{2}_{2})}{2(2m-1)(2m)(2m+1)}
\frac{P''_{2m}(r_{1})P''_{2m-1}(r_{2})-P''_{2m-1}(r_{1})
P''_{2m}(r_{2})}{r_{1}-r_{2}}
\end{equation}
for $ (r_{1},r_{2}) \in I^{2}$,
where $P_{k}$ are the Legendre polynomials  shifted to the interval
\begin{equation}
\label{def_Legendre}
P_{k}(r)=\frac{1}{k!}\frac{d^{k}(r^{2}-r)^{k}}{dr^{k}}, \quad r \in I
\end{equation}
and
\begin{equation}
\label{def_Legendre2}
	Q_{k}(r):= (r-r^{2}) P''_{k}(r), \quad r \in I
\end{equation}
are the  associated  Legendre polynomials of order $2$ (see e.g.~\cite[Sec.~12.5]{Arfken}).

Moreover, since
\[\bar{\mathcal{H}}(t_{*},s):=\bar{\mathcal{J}}^{c}_{ou}(0,s)-\bar{\mathcal{J}}^{c}_{ol}(t_{*},s)-
\bar{\mathcal{J}}^{c}_{ou}(t_{*},s),\]
and from Proposition \ref{prop_Jco} we have
 \begin{eqnarray*}
\bar{\mathcal{J}}^{c}_{ou}(0,0)& \equiv &0\\
\bar{\mathcal{J}}^{c}_{ol}(t_{*},0)&\equiv &0\\
\bar{\mathcal{J}}^{c}_{ou}(t_{*},0)& >& 0,\quad t_{*}\in (0,1)\, ,
\end{eqnarray*}
we find that
\[\bar{\mathcal{H}}(t_{*},0)=-\bar{\mathcal{J}}^{c}_{ou}(t_{*},0) <  0, \quad  t_{*}\in (0,1)\, .\]
Consequently, for $t_{*} \in  (0,1)$, it follows  that $\hat{\bar{\mathcal{H}}}(t_{*},\cdot) \notin \Pi^{2m-1}_{0}$ and therefore
\[\mathcal{K} \neq \mathcal{K}_{2m-1}\, .\]
Moreover,
 from the orthogonal decomposition
\begin{eqnarray*}
\label{dK}
\mathcal{K}(r_{1},r_{2})&=&\int_{I}{\hat{\bar{\mathcal{H}}}(r_{1},s)\hat{\bar{\mathcal{H}}}(r_{2},s)ds}\notag\\
&=&\int_{I}{P_{2m-1}\hat{\bar{\mathcal{H}}}(r_{1},s)\cdot P_{2m-1}\hat{\bar{\mathcal{H}}}(r_{2},s)ds}
+\int_{I}{P^{\perp}_{2m-1}\hat{\bar{\mathcal{H}}}(r_{1},s)\cdot P^{\perp}_{2m-1}\hat{\bar{\mathcal{H}}}(r_{2},s)ds}\notag \\
&=&\mathcal{K}_{2m-1}(r_{1},r_{2})
+\int_{I}{P^{\perp}_{2m-1}\hat{\bar{\mathcal{H}}}\cdot P^{\perp}_{2m-1}\hat{\bar{\mathcal{H}}}(r_{2},s)ds}
\end{eqnarray*}
we conclude
\begin{thm}
\label{thm_RKHS}
Let $\mathcal{K}_{2m-1}$ denote the reproducing kernel for the polynomials
$\Pi^{2m-1}_{0}$ as a subset of $L^{2}(I)$. Then $\mathcal{K}_{2m-1}$ can be expressed by
both \eqref{def_K2m-1} and \eqref{def_CD}.
Moreover, $\Pi^{2m-1}_{0}$ is also a reproducing kernel Hilbert space with kernel
$\mathcal{K}$, and
\[\mathcal{K} -\mathcal{K}_{2m-1}\]
is a reproducing kernel.
\end{thm}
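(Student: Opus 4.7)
The plan is to peel off the three assertions in order, using Corollary \ref{cor_RK} and the self-adjointness of the orthogonal projection $P_{2m-1}$ as the main tools, and then invoking the uniqueness of the reproducing kernel of a finite-dimensional Hilbert space of functions.

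First I would verify that $\mathcal{K}_{2m-1}$ of \eqref{def_K2m-1} actually reproduces $\Pi^{2m-1}_{0}\subset L^{2}(I)$. Because $P_{2m-1}$ acts on the second variable, $\mathcal{K}_{2m-1}(r_{1},\cdot)=P_{2m-1}\hat{\bar{\mathcal{H}}}(r_{1},\cdot)\in \Pi^{2m-1}_{0}$ by construction. For $\phi\in \Pi^{2m-1}_{0}$ we have $P_{2m-1}\phi=\phi$, so by the self-adjointness of $P_{2m-1}$ and the second reproducing identity of Corollary \ref{cor_RK},
\begin{equation*}
\int_{I}\phi(s)\mathcal{K}_{2m-1}(r_{1},s)\,ds
=\int_{I}P_{2m-1}\phi(s)\,\hat{\bar{\mathcal{H}}}(r_{1},s)\,ds
=\int_{I}\phi(s)\hat{\bar{\mathcal{H}}}(r_{1},s)\,ds=\phi(r_{1}).
\end{equation*}
This establishes that $\mathcal{K}_{2m-1}$ is \emph{the} reproducing kernel of $\Pi^{2m-1}_{0}\subset L^{2}(I)$, and by uniqueness of such kernels on a finite-dimensional subspace of $L^{2}(I)$, I only need to identify this kernel with \eqref{def_CD}. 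For this I would exhibit the shifted associated Legendre polynomials $Q_{k}(r)=(r-r^{2})P''_{k}(r)$, $k=2,\ldots,2m-1$, as an orthogonal basis of $\Pi^{2m-1}_{0}$ in $L^{2}(I)$ (they have the right degrees, vanish at $0$ and $1$, and orthogonality follows from that of the Legendre polynomials after the standard $(1-r^{2})$-weight-to-Legendre transfer of associated Legendre polynomials of order $2$). Feeding this basis into the Christoffel–Darboux identity then yields exactly \eqref{def_CD}.

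For the second assertion, that $\Pi^{2m-1}_{0}$ is also a RKHS with kernel $\mathcal{K}$, I would argue exactly as in the paragraph preceding the theorem: by Saitoh's construction, the integral operator $\phi\mapsto \int \phi(s)\hat{\bar{\mathcal{H}}}(\cdot,s)\,ds$ on $L^{2}(I)$ carries a natural RKHS structure on its range with kernel $\mathcal{K}(r_{1},r_{2})=\int_{I}\hat{\bar{\mathcal{H}}}(r_{1},s)\hat{\bar{\mathcal{H}}}(r_{2},s)\,ds$. By \eqref{hi_pi} this range is contained in $\Pi^{2m-1}_{0}$, while the first reproducing identity of Corollary \ref{cor_RK} shows that every $\phi\in \Pi^{2m-1}_{0}$ lies in the range. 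Hence the range equals $\Pi^{2m-1}_{0}$, and $\mathcal{K}$ is a reproducing kernel for this space.

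For the third assertion, I would use the $L^{2}(I)$-orthogonal decomposition $\hat{\bar{\mathcal{H}}}=P_{2m-1}\hat{\bar{\mathcal{H}}}+P_{2m-1}^{\perp}\hat{\bar{\mathcal{H}}}$ in the second variable and expand the defining integral for $\mathcal{K}$, exactly as displayed before the theorem. Cross terms vanish by orthogonality, leaving
\begin{equation*}
\mathcal{K}(r_{1},r_{2})-\mathcal{K}_{2m-1}(r_{1},r_{2})=\int_{I}P_{2m-1}^{\perp}\hat{\bar{\mathcal{H}}}(r_{1},s)\cdot P_{2m-1}^{\perp}\hat{\bar{\mathcal{H}}}(r_{2},s)\,ds,
\end{equation*}
which is a Gram integral and therefore a positive semidefinite symmetric kernel, i.e.\ the reproducing kernel of the RKHS it generates via Saitoh's construction (the range being the span of $P_{2m-1}^{\perp}\hat{\bar{\mathcal{H}}}(\cdot,s)$ as $s$ varies).

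The step I expect to require the most care is the identification of $\mathcal{K}_{2m-1}$ with the explicit Christoffel–Darboux expression \eqref{def_CD}: one must be sure the normalization constants are consistent between the definitions $Q_{k}=(r-r^{2})P''_{k}$, the $L^{2}(I)$-norms of $Q_{k}$ computed from those of the Legendre polynomials, and the $\frac{1}{r_{1}-r_{2}}$ pre-factor in Christoffel–Darboux. All other steps are short manipulations based on self-adjointness of $P_{2m-1}$, Corollary \ref{cor_RK}, and Saitoh's general framework.
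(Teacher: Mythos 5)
Your overall approach matches the paper's implicit proof (which consists of the discussion immediately preceding the theorem statement), but there is a slip in the first step that needs repair. You assert that $\mathcal{K}_{2m-1}(r_{1},\cdot)=P_{2m-1}\hat{\bar{\mathcal{H}}}(r_{1},\cdot)$ ``by construction'' and then substitute this into the reproducing-identity computation; but \eqref{def_K2m-1} defines $\mathcal{K}_{2m-1}$ as the Gram integral $\int_{I}P_{2m-1}\hat{\bar{\mathcal{H}}}(r_{1},s)\,P_{2m-1}\hat{\bar{\mathcal{H}}}(r_{2},s)\,ds$, not as $P_{2m-1}\hat{\bar{\mathcal{H}}}$ itself, and the two are not equal by definition. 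If instead one simply expands the Gram integral and pushes $\phi$ through, the first equality in your display does not follow without further argument, because $\hat{\bar{\mathcal{H}}}$ is not symmetric and Corollary \ref{cor_RK} reproduces via integration in the \emph{second} slot only. The correct route is: (i) show first that $\kappa:=P_{2m-1}\hat{\bar{\mathcal{H}}}$ is itself the reproducing kernel of $\Pi^{2m-1}_{0}\subset L^{2}(I)$, exactly by the self-adjointness argument you give, since $\kappa(r_{1},\cdot)\in\Pi^{2m-1}_{0}$ and $\int\phi(s)\kappa(r_{1},s)\,ds=\int (P_{2m-1}\phi)(s)\hat{\bar{\mathcal{H}}}(r_{1},s)\,ds=\phi(r_{1})$ for $\phi\in\Pi^{2m-1}_{0}$; (ii) then observe that the reproducing property of $\kappa$ applied to $\phi:=\kappa(r_{1},\cdot)\in\Pi^{2m-1}_{0}$ gives $\kappa(r_{1},r_{2})=\int_{I}\kappa(r_{1},s)\kappa(r_{2},s)\,ds=\mathcal{K}_{2m-1}(r_{1},r_{2})$, which identifies the Gram integral \eqref{def_K2m-1} with $\kappa$ and hence with the unique reproducing kernel. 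With that insertion your chain of equalities becomes valid.

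The remaining parts of your proposal — identifying the reproducing kernel of $\Pi^{2m-1}_{0}\subset L^{2}(I)$ with the Christoffel--Darboux expression \eqref{def_CD} via the orthogonal basis $\{Q_{k}\}$, establishing the RKHS structure with kernel $\mathcal{K}$ via Saitoh together with \eqref{hi_pi} and Corollary \ref{cor_RK}, and writing $\mathcal{K}-\mathcal{K}_{2m-1}$ as the Gram integral of $P^{\perp}_{2m-1}\hat{\bar{\mathcal{H}}}$ and concluding positive semidefiniteness — all track the paper's argument and are fine. Your cautionary remark about normalization constants in the Christoffel--Darboux identification is well placed.
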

\begin{rmk}
Besides the fact that the kernel $\mathcal{K}$ defining the Hilbert space structure for the polynomials
$\Pi^{2m-1}_{0}$ is not that of the Legendre polynomials, we do not know if this kernel is known, nor do we have
an explicit formula for it. However,
what this section shows is that this kernel and its associated Hilbert space $\Pi^{2m-1}_{0}$ are intimately
connected with the canonical representations of truncated Hausdorff moments, and therefore might be called the
Markov-Kre\u{\i}n kernel. Moreover, if instead of the uniform
measure on the moments, a Selberg type density is used, more such reproducing kernels may be revealed.
\end{rmk}
\section{New Selberg Integral Formulas}
\label{sec_newselberg}
The integral representations of the mean Hausdorff moments of Proposition \ref{prop_mom} provide
  new integral identities of Selberg type.
In the following theorem, we provide the first in a sequence
 corresponding to when $n$ is odd and even. We then show how to use the reproducing kernel identities
of Theorem \ref{thm_RK} to generate biorthogonal systems of Selberg integral formulas.
\begin{thm}
It holds true that
\label{thm_selberg}
\begin{equation}
\int_{I^{m}}{\Sigma t^{-1}\cdot \prod_{j=1}^{m}{t_{j}^{2}(1-t_{j})^{2}}
\Delta_{m}^{4}(t)
dt}= \frac{S_{m}(5,1,2)-S_{m}(3,3,2)}{2}\, .\end{equation}
and
\begin{equation}
 \int_{I^{m}}{\Sigma t^{-1} \cdot \prod_{j=1}^{m}{t_{j}^{2}}\cdot \Delta_{m}^{4}(t)dt}=
\frac{m}{2}S_{m-1}(5,3,2)\, .\end{equation}
\end{thm}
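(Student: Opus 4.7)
My plan is to exploit the fact that the volume $Vol(M^n)$ expressed via the canonical representations in \eqref{vol_Jco} and \eqref{vol_Jce} is, by construction, independent of the free parameter $t_*$, so its derivative in $t_*$ at $t_*=0$ must vanish. Evaluating this derivative yields precisely the two stated Selberg identities, as already signaled in the passages following \eqref{vol_Jco} and \eqref{vol_Jce}.

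\paragraph{First identity (odd case, via \eqref{vol_Jco} with $n=2m+1$).} I would apply the odd-case volume identity with dimension $n = 2m+1$ (i.e.\ the integer $M=m+1$ in the formula), so that the integrations run over $I^m$ rather than $I^{m-1}$, and the Jacobians from Proposition~\ref{prop_Jco} become
\[
\mathcal{J}^c_{ol}(t_*,t) = t_* \prod_{j=1}^m (t_j-t_*)^2 \prod_{j=1}^m t_j^2 \, \Delta_m^4(t),\qquad
\mathcal{J}^c_{ou}(t_*,t) = (1-t_*) \prod_{j=1}^m (t_j-t_*)^2 \prod_{j=1}^m (1-t_j)^2 \, \Delta_m^4(t).
\]
I would then differentiate in $t_*$ and evaluate at $t_*=0$. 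Using $\tfrac{\partial}{\partial t_*}\prod_j(t_j-t_*)^2\big|_{t_*=0}=-2\,\Sigma t^{-1}\prod_j t_j^2$, the $\mathcal{J}^c_{ol}$-piece (which vanishes at $t_*=0$) contributes only $\prod_{j=1}^m t_j^4\, \Delta_m^4(t)$ (the $t_*$-factor derivative), while the $\mathcal{J}^c_{ou}$-piece contributes $-\prod_{j=1}^m t_j^2(1-t_j)^2\,\Delta_m^4(t)$ from the derivative of the $(1-t_*)$ factor and $-2\,\Sigma t^{-1}\prod_{j=1}^m t_j^2(1-t_j)^2\,\Delta_m^4(t)$ from the derivative of $\prod_j(t_j-t_*)^2$. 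Setting the integral over $I^m$ of the sum to zero and recognizing the first two integrals as $S_m(5,1,2)$ and $S_m(3,3,2)$ respectively yields the first formula.

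\paragraph{Second identity (even case, via \eqref{vol_Jce} with $n=2m$).} Here I would use Proposition~\ref{prop_Jce} directly, differentiate the two-term volume identity in $t_*$ and evaluate at $t_*=0$. For $\mathcal{J}^c_{el}(t_*,t)=\prod_{j=1}^m(t_j-t_*)^2\,\Delta_m^4(t)$ the derivative at $0$ is $-2\,\Sigma t^{-1}\prod_{j=1}^m t_j^2\,\Delta_m^4(t)$. For $\mathcal{J}^c_{eu}(t_*,t)=t_*(1-t_*)\prod_{j=1}^{m-1}(t_j-t_*)^2\prod_{j=1}^{m-1}t_j^2(1-t_j)^2\,\Delta_{m-1}^4(t)$, the prefactor $t_*(1-t_*)$ vanishes and its derivative equals $1$ at $t_*=0$, so the derivative at $0$ is just the remaining factors evaluated at $t_*=0$, namely $\prod_{j=1}^{m-1}t_j^4(1-t_j)^2\,\Delta_{m-1}^4(t)$, which integrates to $S_{m-1}(5,3,2)$. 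Setting the weighted sum to zero with the combinatorial constants $\tfrac{1}{(2m)!\,m!}$ and $\tfrac{1}{(2m)!(m-1)!}$ and isolating the $\Sigma t^{-1}$ integral gives $\frac{m}{2} S_{m-1}(5,3,2)$.

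\paragraph{Main obstacle.} The analytical content is essentially the chain rule and correctly identifying each term with a Selberg integrand $\prod_j t_j^{\alpha-1}(1-t_j)^{\beta-1}|\Delta(t)|^{2\gamma}$. The only genuine subtlety is organizational: one must use the odd-case formula with $n=2m+1$ (not $n=2m-1$) to land on integrals over $I^m$, and keep track of the asymmetric combinatorial prefactors $\tfrac{1}{(2m)!\,m!}$ and $\tfrac{1}{(2m)!(m-1)!}$ in \eqref{vol_Jce} so that the ratio $\tfrac{m!}{(m-1)!}=m$ appears as the leading coefficient in the second identity. Differentiability under the integral sign is immediate since the integrands are polynomial in $t_*$ over the compact domain.
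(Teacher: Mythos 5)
Your proposal is correct and takes essentially the same approach as the paper: differentiate the canonical-representation volume identities \eqref{vol_Jco} and \eqref{vol_Jce} in $t_*$ at $t_*=0$, compute the Jacobian derivatives, and identify the resulting integrals as Selberg integrals. The only cosmetic difference is in the odd case, where the paper works with $n=2m-1$ (integrals over $I^{m-1}$) and then relabels $m \mapsto m+1$ at the end, whereas you start directly with $n=2m+1$ to land on $I^m$; the two are equivalent.
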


The identities of Theorem \ref{thm_selberg} follow only from the volume equalities, that is,  the $i=0$ case of Theorem \ref{thm_RK}.
The following theorem demonstrates how to use all the moment equalities of  Theorem \ref{thm_RK}  to
 generate biorthogonal systems of
  Selberg integral formulas. Let us recall definition \eqref{def_H}
\[\mathcal{H}(t_{*},t)=\mathcal{J}^{p}_{ou}(t)-\mathcal{J}^{c}_{ol}(t_{*},t)-\mathcal{J}^{c}_{ou}(t_{*},t)\, .\]
\begin{thm}
\label{thm_selberg2}
Let $n=2m-1$ and
consider the scaled kernel
 \[\hat{\mathcal{H}}:=\frac{1}{Vol(M^{2m-1})}\frac{2}{(2m-1)!(m-1)!}\mathcal{H}\, .\]
Then,
\[\hat{\mathcal{H}}(\cdot ,t) \in \Pi^{2m-1}_{0}, \quad t \in I^{m-1}\, .\]
Moreover, consider  a basis $\{p_{j},j=1,..,2m-2\}$ for $\Pi^{2m-1}_{0}$  and the resulting  expansion
of $\hat{\mathcal{H}}(\cdot ,t)$ in this basis for each $t \in I^{m-1}$;
\[\hat{\mathcal{H}}(t_{*} ,t)=\sum_{j=1}^{2m-2}{h_{j}(t)p_{j}(t_{*})},\quad  (t_{*},t) \in I \times I^{m-1}\, .\]
Then,  $\{\Sigma p_{j},j=1,..,2m-2\},$ $\{h_{j},j=1,..,2m-2\}$ form an $L^{2}(I^{m-1})$ biorthogonal system.
That is,
\[
\int_{I^{m-1}}{ h_{j}\Sigma p_{k}} =\delta_{jk}, \quad  j,k=1,..,2m-2\, \,
\]
\end{thm}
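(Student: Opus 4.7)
The proof splits naturally into two parts: verifying that $\hat{\mathcal{H}}(\cdot,t)\in \Pi^{2m-1}_{0}$, and then reading off the biorthogonality from the reproducing identity in Theorem \ref{thm_RK}.

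For the first part, I would inspect the explicit formulas in Proposition \ref{prop_Jco}. Regarded as a function of $t_{*}$, the expression
\[
\mathcal{J}^{c}_{ol}(t_{*},t)=t_{*}\prod_{j=1}^{m-1}(t_{j}-t_{*})^{2}\prod_{j=1}^{m-1}t_{j}^{2}\cdot \Delta_{m-1}^{4}(t)
\]
is a polynomial of degree $1+2(m-1)=2m-1$, and similarly for $\mathcal{J}^{c}_{ou}(t_{*},t)$. Consequently $\mathcal{H}(\cdot,t)\in\Pi^{2m-1}$. The vanishing at the endpoints is exactly \eqref{H_dirichlet}, which I would re-derive from the boundary relations \eqref{principal_canonical0}--\eqref{principal_canonical1_2}: at $t_{*}=0$ the factor $t_{*}$ kills $\mathcal{J}^{c}_{ol}(0,t)$ and $\mathcal{J}^{c}_{ou}(0,t)=\mathcal{J}^{p}_{ou}(t)$ cancels the first summand, while at $t_{*}=1$ we have $\mathcal{J}^{c}_{ou}(1,t)\equiv 0$ and $\mathcal{J}^{c}_{ol}(1,t)=\mathcal{J}^{p}_{ou}(t)$ cancels it. Hence $\hat{\mathcal{H}}(\cdot,t)\in \Pi^{2m-1}_{0}$.

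For the second part, note that $\Pi^{2m-1}_{0}$ has dimension $2m-2$ (two linear constraints on a $2m$-dimensional space), so a basis $\{p_{j}\}_{j=1}^{2m-2}$ is correctly sized, and the expansion
\[
\hat{\mathcal{H}}(t_{*},t)=\sum_{j=1}^{2m-2}h_{j}(t)p_{j}(t_{*})
\]
is well-defined with each $h_{j}\in L^{2}(I^{m-1})$ (the coefficients are obtained by inverting a fixed nonsingular matrix applied to $\hat{\mathcal{H}}(t_{*},t)$ evaluated at a unisolvent set of $t_{*}$'s). Now apply the first reproducing identity from Theorem \ref{thm_RK} to each basis element $p_{k}\in \Pi^{2m-1}_{0}$: since $p_{k}(0)=p_{k}(1)=0$ the two boundary integrals drop out and we obtain
\[
p_{k}(t_{*})=\int_{I^{m-1}}(\Sigma p_{k})(t)\,\hat{\mathcal{H}}(t_{*},t)\,dt.
\]
Substituting the basis expansion of $\hat{\mathcal{H}}$ and exchanging the (finite) sum with the integral yields
\[
p_{k}(t_{*})=\sum_{j=1}^{2m-2}\Bigl(\int_{I^{m-1}}(\Sigma p_{k})(t)\,h_{j}(t)\,dt\Bigr)\,p_{j}(t_{*}).
\]

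Since $\{p_{j}\}_{j=1}^{2m-2}$ is a basis of $\Pi^{2m-1}_{0}$, the coefficients in the expansion of $p_{k}$ are uniquely determined, so matching the coefficient of $p_{j}(t_{*})$ on both sides gives exactly
\[
\int_{I^{m-1}}h_{j}(t)\,(\Sigma p_{k})(t)\,dt=\delta_{jk},
\]
which is the claimed biorthogonality. The only real subtlety is justifying the interchange of sum and integral (trivial since the sum is finite) and confirming that the $h_{j}$ live in $L^{2}(I^{m-1})$, which follows from the boundedness of the kernel $\mathcal{H}$ on the compact set $I\times I^{m-1}$; I do not expect any serious obstacle since the whole argument is really a restatement of Theorem \ref{thm_RK} after choosing a basis.
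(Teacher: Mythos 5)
Your proof is correct and follows essentially the same route as the paper's: establish $\hat{\mathcal{H}}(\cdot,t)\in\Pi^{2m-1}_{0}$ from Proposition \ref{prop_Jco} and \eqref{H_dirichlet}, apply the reproducing identity of Theorem \ref{thm_RK} to $\phi=p_k$ so the boundary terms vanish, substitute the basis expansion of $\hat{\mathcal{H}}$, and match coefficients using linear independence of the $p_j$. Your extra checks (the dimension count $\dim\Pi^{2m-1}_{0}=2m-2$ and the $L^{2}$ membership of the $h_j$) are sound but not present in the paper's more terse argument.
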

As an immediate corollary, we have
\begin{cor}
\label{cor_selberg2}
Let $n=2m-1$ and
consider the scaled marginal kernel
 \[\hat{\bar{\mathcal{H}}}:=\frac{1}{Vol(M^{2m-1})}\frac{2}{(2m-1)!(m-2)!}\bar{\mathcal{H}}\, \]
(note  the different scaling than Theorem
\ref{thm_selberg2}).
Then,
\[\hat{\bar{\mathcal{H}}}(\cdot ,s) \in \Pi^{2m-1}_{0}, \quad s \in I\, .\]
Moreover, consider  a basis $\{p_{j},j=1,..,2m-2\}$ for $\Pi^{2m-1}_{0}$  and the resulting  expansion
of $\hat{\bar{\mathcal{H}}}(\cdot ,s)$ in this basis for each $s \in I$;
\[\hat{\bar{\mathcal{H}}}(t_{*} ,s)=\sum_{j=1}^{2m-2}{\bar{h}_{j}(s)p_{j}(t_{*})},\quad  (t_{*},s) \in I \times I\, .\]
Then,  $\{p_{j},j=1,..,2m-2\},$ $\{\bar{h}_{j},j=1,..,2m-2\}$ form an $L^{2}(I)$ biorthogonal system.
That is,
\[
\int_{I}{ \bar{h}_{j}p_{k}} =\delta_{jk}, \quad  j,k=1,..,2m-2\,  .
\]
\end{cor}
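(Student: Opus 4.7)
The plan is to deduce the corollary directly from the marginalized reproducing identity already established in Corollary \ref{cor_RK}, namely $\phi(t_{*})=\int_{I}\phi(s)\,\hat{\bar{\mathcal{H}}}(t_{*},s)\,ds$ for every $\phi \in \Pi^{2m-1}_{0}$, after verifying that the kernel $\hat{\bar{\mathcal{H}}}(\cdot,s)$ really does lie in $\Pi^{2m-1}_{0}$ for each $s\in I$. This fiberwise polynomial property is already invoked in Theorem \ref{thm_RKHS} and follows from inspection: the Jacobian formulas in Proposition \ref{prop_Jco} show that $\mathcal{J}^{c}_{ol}(\cdot,t)$ and $\mathcal{J}^{c}_{ou}(\cdot,t)$ are polynomials of degree at most $2m-1$ in their first argument (the factors $t_{*}$, $1-t_{*}$, and $\prod_{j=1}^{m-1}(t_{j}-t_{*})^{2}$ yield degree $1+2(m-1)=2m-1$), and the Dirichlet-type vanishing conditions \eqref{H_dirichlet} together with the fact that marginalization in the second variable preserves both degree-in-$t_{*}$ and the vanishing at $t_{*}=0,1$ place $\hat{\bar{\mathcal{H}}}(\cdot,s)$ in $\Pi^{2m-1}_{0}$.

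With this in hand the biorthogonality is a one-line consequence of uniqueness of coefficients in a basis expansion. Given the basis $\{p_{j}\}_{j=1}^{2m-2}$ of $\Pi^{2m-1}_{0}$ and the expansion $\hat{\bar{\mathcal{H}}}(t_{*},s)=\sum_{j=1}^{2m-2}\bar{h}_{j}(s)p_{j}(t_{*})$, apply the reproducing identity with $\phi=p_{k}$:
\begin{equation*}
p_{k}(t_{*})=\int_{I}p_{k}(s)\hat{\bar{\mathcal{H}}}(t_{*},s)\,ds=\sum_{j=1}^{2m-2}\Big(\int_{I}p_{k}(s)\bar{h}_{j}(s)\,ds\Big)p_{j}(t_{*}).
\end{equation*}
Since $\{p_{j}\}_{j=1}^{2m-2}$ is a basis of $\Pi^{2m-1}_{0}$, the coefficients in the expansion of $p_{k}$ are unique, forcing $\int_{I}p_{k}\bar{h}_{j}\,ds=\delta_{jk}$, which is precisely the claimed biorthogonality.

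The main conceptual obstacle, such as it is, is not computational but rather checking the bookkeeping: confirming that the scaling constant $\tfrac{2}{(2m-1)!(m-2)!}$ appearing here is indeed the marginal-integration analogue of the $\tfrac{2}{(2m-1)!(m-1)!}$ used in Theorem \ref{thm_selberg2} (the extra factor $(m-1)$ coming from symmetry: $\int_{I^{m-1}}(\Sigma\phi)(t)\mathcal{J}(t_{*},t)\,dt=(m-1)\int_{I}\phi(s)\bar{\mathcal{J}}(t_{*},s)\,ds$ for kernels $\mathcal{J}$ symmetric in their second-variable coordinates), and verifying that the boundary terms in Corollary \ref{cor_RK} drop out on the subspace $\Pi^{2m-1}_{0}$. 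Once this scaling is settled, the corollary is simply the statement that a reproducing kernel of a finite-dimensional RKHS produces a biorthogonal dual basis when expanded against any given basis of the space, specialized here to $\Pi^{2m-1}_{0}\subset L^{2}(I)$ with kernel $\hat{\bar{\mathcal{H}}}$.
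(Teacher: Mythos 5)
Your proof is correct and takes essentially the same route as the paper: extract the $\delta_{jk}$ by plugging $\phi = p_{k}$ into the reproducing identity and invoking uniqueness of coefficients in a basis of $\Pi^{2m-1}_{0}$. The only difference is the order of operations --- you apply the already-marginalized identity of Corollary~\ref{cor_RK} directly over $I$, whereas the paper first derives the $I^{m-1}$-level biorthogonality of Theorem~\ref{thm_selberg2} and then marginalizes that conclusion using the symmetric-group invariance of the $h_{k}$; the two steps commute, so the arguments are equivalent.
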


The choice of basis for $\Pi^{2m-1}_{0}$ determines
the corresponding component functions $h_{j},j=1,..,2m-2$  and the integrands
$ h_j\Sigma p_{k}$
in Theorem \ref{thm_selberg2}.
 Therefore, the task remaining is to select
a basis for which the component functions $h_{j}$ can be determined and such that the
resulting integrals are of interest. When the chosen basis is orthonormal with respect to some
inner product $\langle \cdot, \cdot\rangle$, then the coefficients $h_{j}$ in the representation
\[\hat{\mathcal{H}}(t_{*} ,t)=\sum_{j=1}^{2m-2}{h_{j}(t)p_{j}(t_{*})},\quad  (t_{*},t) \in I \times I^{m-1}\, .\]
of Theorem  \ref{thm_selberg} are
\[h_{j}(t)=\langle\hat{\mathcal{H}}(\cdot,t),  p_{j}\rangle \, .\]

As an example, we now compute these component functions, and therefore determine
explicit forms for these Selberg integrals, when the basis consists of the associated Legendre polynomials of order $2$.
To that end, recall the definitions \eqref{def_Legendre} and  \eqref{def_Legendre2} of the Legendre polynomials
and the associated Legendre polynomials of order $2$
translated to the unit interval $I$.
 In addition, recall
 the $j$-th symmetric function $e_{j}$  defined as
\[ e_{j}(t):=\sum_{i_{1}< \cdots < i_{j}}{t_{i_{1}}\cdots t_{i_{j}}}\]
with $e_{0}:=1$ and the symmetric functions $e_{j}(t,z)$ restricted to the diagonal $t=z$
\begin{equation}
\label{def_e}
e_{j}(t,t):=\sum_{j_{1}+j_{2}=j}{e_{j_{1}}(t)e_{j_{2}}(t)},\quad  j=0,..,2m-2\, .
\end{equation}
\begin{thm}
\label{thm_selberg_explicit}
Consider the basis of $\Pi^{2m-1}_{0}$ consisting of the   associated Legendre polynomials $Q_{j},j=2,..,2m-1$ of order $2$
translated to the unit interval $I$.  For $k=2,..,2m-1$ define
\[a_{jk}:=
 \frac{(j+k+k^{2})\Gamma(j+2)\Gamma(j)}{\Gamma(j+k+2)\Gamma(j-k+1)},\quad k \leq j \leq 2m-1\]
\[\tilde{h}_{k}(t):=
\sum_{j=k}^{2m-1}{(-1)^{j+1}a_{jk}e_{2m-1-j}(t,t)}\, .
        \]
Then for $j=k\bmod 2$, $j,k=2,..,2m-1$,  we have
\[\int_{I^{m-1}}{\tilde{h}_{k}(t)\Sigma Q_{j}(t)\prod_{j'=1}^{m-1}{t_{j'}^{2}}\cdot
        \Delta_{m-1}^{4}(t)dt}=Vol(M^{2m-1})(2m-1)!(m-1)!\frac{(k+2)!}{(8k+4)(k-2)!} \delta_{jk}\, .\]
\end{thm}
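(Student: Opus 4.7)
The plan is to read off $\tilde h_k(t)\prod_{j'=1}^{m-1}t_{j'}^{2}\,\Delta_{m-1}^{4}(t)$, up to a constant, as essentially the ``$A$-component'' of the coefficient of $Q_k(t_{*})$ in the $L^{2}(I)$-expansion of $\mathcal{H}(t_{*},t)$ in the orthogonal basis $\{Q_k\}_{k=2}^{2m-1}$ of $\Pi^{2m-1}_{0}$, and then to collapse the biorthogonality identity $\int_{I^{m-1}} h_k\,\Sigma Q_j\,dt=\delta_{jk}$ of Theorem \ref{thm_selberg2} to the claimed formula using the reflection symmetry $\mathcal{H}(1-t_{*},1-t)=\mathcal{H}(t_{*},t)$ of \eqref{H_symm}.

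First, I expand $\mathcal{H}$ as a polynomial of degree $2m-1$ in $t_{*}$. From the definition \eqref{def_H} of $\mathcal{H}$, Proposition \ref{prop_Jco}, and the expansion $\prod_{j=1}^{m-1}(t_j-t_{*})^{2}=\sum_{l=0}^{2m-2}(-1)^l t_{*}^l\, e_{2m-2-l}(t,t)$, setting $A(t):=\prod t_j^{2}$ and $B(t):=\prod(1-t_j)^{2}$, one obtains $\mathcal{H}(t_{*},t)/\Delta_{m-1}^{4}(t)=\sum_{l=1}^{2m-1}c_l(t)\,t_{*}^l$ with $c_l(t)=(-1)^l[(A-B)\,e_{2m-1-l}(t,t)-B\,e_{2m-2-l}(t,t)]$. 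The Dirichlet conditions \eqref{H_dirichlet} translate into $c_0\equiv 0$ and $\sum_{l=1}^{2m-1}c_l\equiv 0$. Next, two integrations by parts on $Q_k=(t_{*}-t_{*}^{2})P_k''$, combined with the Rodrigues-derived identity $\int_{0}^{1} t^s P_k(t)\,dt=\Gamma(s+1)^{2}/[\Gamma(s-k+1)\Gamma(s+k+2)]$, yield the moments $\mu_{lk}:=\int_{0}^{1} t_{*}^l Q_k(t_{*})\,dt_{*}=1-a_{lk}$ for $l\geq k$ and $\mu_{lk}=1$ for $1\leq l<k$ (the pole of $\Gamma(l-k+1)^{-1}$ killing $a_{lk}$ when $l<k$).

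By the classical $L^{2}(I)$-orthogonality of the associated Legendre polynomials, $\int_{0}^{1}Q_jQ_k\,dr=\|Q_k\|^{2}\delta_{jk}$ with $\|Q_k\|^{2}=(k+2)!/[(2k+1)(k-2)!]$, I may write $\mathcal{H}(t_{*},t)=\Delta_{m-1}^{4}(t)\sum_k d_k(t)Q_k(t_{*})$ with $d_k(t)\|Q_k\|^{2}=\sum_l c_l(t)\mu_{lk}$; the vanishing $\sum_l c_l=0$ causes the ``$1$''-part to drop out, leaving $d_k\|Q_k\|^{2}=-\sum_{l=k}^{2m-1}c_l\,a_{lk}=(A-B)\tilde h_k(t)-B\tilde h'_k(t)$, where $\tilde h'_k(t):=\sum_{l=k}^{2m-1}(-1)^{l+1}a_{lk}\,e_{2m-2-l}(t,t)$ is the natural $B$-companion of $\tilde h_k$. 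Hence the coefficient $h_k$ in Theorem \ref{thm_selberg2} equals $\kappa\,\Delta_{m-1}^{4}\,d_k$ with $\kappa:=2/[(2m-1)!(m-1)!\,Vol(M^{2m-1})]$, and biorthogonality reads $(\kappa/\|Q_k\|^{2})(I_1-I_1^B-I_2^B)=\delta_{jk}$, where $I_1:=\int A\Delta_{m-1}^{4}\tilde h_k\,\Sigma Q_j\,dt$, $I_2:=\int A\Delta_{m-1}^{4}\tilde h'_k\,\Sigma Q_j\,dt$, and $I_1^B,I_2^B$ denote the same integrals with $A$ replaced by $B$.

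The final step is the reflection argument, and it is where the real difficulty lies. Combining $\mathcal{H}(1-t_{*},1-t)=\mathcal{H}(t_{*},t)$ with the parity $Q_k(1-r)=(-1)^k Q_k(r)$ of the shifted associated Legendre polynomials forces $d_k(1-t)=(-1)^k d_k(t)$; matching the $A$- and $B$-coefficients of this identity---using $A(1-t)=B(t)$ and $B(1-t)=A(t)$---produces the two reflection identities $\tilde h'_k(1-t)=(-1)^k\tilde h'_k(t)$ and $\tilde h_k(1-t)=(-1)^{k+1}[\tilde h_k(t)+\tilde h'_k(t)]$. Applying the change of variable $t\mapsto 1-t$ to $I_1^B,I_2^B$, together with $\Sigma Q_j(1-t)=(-1)^j\Sigma Q_j(t)$ and the same-parity hypothesis $j\equiv k\pmod{2}$, yields $I_1^B=-(I_1+I_2)$ and $I_2^B=I_2$, so that biorthogonality collapses to $2\kappa I_1/\|Q_k\|^{2}=\delta_{jk}$. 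Substituting $\kappa$ and $\|Q_k\|^{2}$ and using $4(2k+1)=8k+4$ delivers exactly $Vol(M^{2m-1})(2m-1)!(m-1)!(k+2)!/[(8k+4)(k-2)!]$. The delicate point is the reflection step, since neither $\tilde h_k$ nor $\tilde h'_k$ is individually parity-symmetric and the two identities above must be extracted by a careful $A$--$B$ coefficient comparison in the kernel symmetry equation; once they are in hand, the cancellation of the $I_2$ and $B$-contributions is automatic, and the moment computation, expansion, and constant arithmetic are routine.
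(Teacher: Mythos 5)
Your overall structure is sound and your final bookkeeping does produce the stated formula, but there is a genuine gap at exactly the step you flag as ``delicate'': the passage from the single identity $d_k(1-t)=(-1)^k d_k(t)$, equivalently
\[
(B-A)\,\tilde h_k(1-t)-A\,\tilde h'_k(1-t)=(-1)^k\bigl[(A-B)\,\tilde h_k(t)-B\,\tilde h'_k(t)\bigr],
\]
to the \emph{two} separate identities $\tilde h_k(1-t)=(-1)^{k+1}[\tilde h_k(t)+\tilde h'_k(t)]$ and $\tilde h'_k(1-t)=(-1)^k\tilde h'_k(t)$. You assert these follow by ``matching $A$- and $B$-coefficients,'' but $A=\prod t_j^2$ and $B=\prod(1-t_j)^2$ are just two fixed polynomials of the same degree, not a basis or a free pair over the ring of symmetric polynomials in $(t_1,\dots,t_{m-1})$; one polynomial identity of the form $A\cdot p + B\cdot q \equiv 0$ does not force $p\equiv 0$ and $q\equiv 0$ (compare $A(B-A)+B(A-B)\equiv 0$). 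So a single kernel symmetry equation cannot by itself be split into two. The two claimed reflection identities are in fact true (I checked $m=2$), but your argument does not prove them; proving them directly requires the reflection law for the diagonal symmetric functions $e_j(t,t)$, which expands $e_j(1-t,1-t)$ as a nontrivial binomial-weighted sum over all $e_l(t,t)$ with $l\le j$ and is not a clean parity rule.

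The paper sidesteps this entirely by applying the reflection at a different point. Instead of expanding $\mathcal{H}$ in $Q_k$ and then trying to reflect the coefficients, it computes $\int_I \mathcal{J}^c_{ol}(r,t)\,Q_k(r)\,dr = -\acute h_k(t)+\mathcal{J}^p_{ou}(t)$ with $\acute h_k(t)=\mathcal{J}^p_{el}(t)\,\tilde h_k(t)$, and then uses the kernel-level symmetry $\mathcal{J}^c_{ou}(t_*,t)=\mathcal{J}^c_{ol}(1-t_*,1-t)$ together with $Q_k(1-r)=(-1)^kQ_k(r)$ to obtain
\[
\int_I \mathcal{J}^c_{ou}(r,t)\,Q_k(r)\,dr = (-1)^k\bigl[-\acute h_k(1-t)+\mathcal{J}^p_{ou}(1-t)\bigr],
\]
so that $\int_I\mathcal{H}(r,t)Q_k(r)\,dr = \acute h_k(t)+(-1)^k\acute h_k(1-t)$ with $\acute h_k(1-t)$ retained as a formal object. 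The term $\acute h_k(1-t)$ is then eliminated from the biorthogonality integral by the elementary change of variables $t\mapsto 1-t$, using only $Q_j(1-t)=(-1)^jQ_j(t)$ and the parity hypothesis $j\equiv k\pmod 2$. No reflection formula for $\tilde h_k$ or $\tilde h'_k$ (and indeed no $\tilde h'_k$) is ever needed. If you want to keep your route, the missing step is a direct derivation of those two reflection identities, which amounts to a nontrivial combinatorial identity for the $a_{jk}$ and the $e_l(t,t)$; if instead you move the reflection to the kernel level as the paper does, the difficulty disappears.
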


\section{Proofs}

\subsection{Proof of Theorem \ref{thm_shiva} }
We seek to apply the nested reduction theorem \cite[Thm.~4.11]{BayesOUQ}.
The assertion is trivially true when  $\mathcal{L}(\mathcal{A})=\mathcal{U}(\mathcal{A})$ so we can assume
$\mathcal{L}(\mathcal{A})<\mathcal{U}(\mathcal{A})$.
Let us first establish that the assumptions of the theorem are well defined. To that end, note that
\cite[Lem.~3.10]{BayesOUQ} (which follows from Castaing and Valadier
 \cite[Lemma III.39\, p.~86]{CastaingValadier:1977}, which in turn follows
from  Saint-Beuve's
 \cite{SainteBeuve1974} extension of  Aumann's Selection Theorem to Suslin spaces) implies that $q\rightarrow \inf_{(f,\mu)\in \Psi^{-1}(q)} \Dmap(f,\mu)[B]$ is
 universally measurable and hence the conditions of the theorem are well defined if we extend
 the definitions in the usual way when operating on universally measurable sets and functions.
Similarly, since for any $\lambda$ the function $(f,\mu)\mapsto (\Phi(f,\mu)-\lambda)\Dmap(f,\mu)[B]$ is measurable, the function
$\theta:\mathcal{Q}\rightarrow
\R$ defined by
\[\theta(q):=
 \sup_{(f,\mu)\in \Psi^{-1}(q)} (\Phi(f,\mu)-\lambda)\Dmap(f,\mu)[B_{\d}]
\] is universally
measurable.

For the proof of the theorem,  let $\mathbb{Q}\in \mathfrak{Q},\,\Dmap\in \Dmaps$ satisfy the assumptions, and    define $\lambda:=\mathcal{U}(\mathcal{A})-2\delta '$.
Consider the events
\[\mathcal{Q}_{\d}:=\bigl\{q: \inf_{(f,\mu)\in  \Psi^{-1}(q)} \Dmap(f,\mu)[B_{\d}]\leq \tau\big\}\]
\[U_{\e,\d}= \Bigl\{ q: \sup_{(f,\mu)\in \Psi^{-1}(q),\, \Dmap(f,\mu)[B_{\d}]> \e}\Phi(f,\mu) >\sup_{(f,\mu)\in \mathcal{A}}\Phi(f,\mu)    -\delta' \Bigr\}\]
where the assumptions \eqref{eq:dto0app} and \eqref{eq:djkdjehjehj33app} become
\[ \mathbb{Q}\bigl(\mathcal{Q}_{\d}\bigr) \geq 1-h(\d),\quad  \d >0\]
\[ \mathbb{Q}\bigl(U_{\e,\d}\bigr) \geq \e',  \quad \d >0\]
Let us denote $\tau':=\tau\bigl(\mathcal{U}(\mathcal{A})-\mathcal{L}(\mathcal{A})\bigr) $.
It is easy to see that
\[ \bigl\{\theta \geq -\tau' \bigr\}\supset \mathcal{Q}_{\d} \]
and
\[ \{\theta > \e\d'\} \supset  U_{\e,\d} \]
and therefore
\begin{eqnarray*}
 \mathbb{Q}\bigl(\{\theta \geq -\tau' \}\bigr)& \geq &\mathbb{Q}\bigl(\mathcal{Q}_{\d}\bigr)\\
  &\geq& 1-h(\d)
\end{eqnarray*}
and
\begin{eqnarray*}
\mathbb{Q}\bigl(\{\theta > \e\d'\} \bigr)& \geq &\mathbb{Q}\bigl(U_{\e,\d}\bigr)\\
  &\geq& \e'\, .
\end{eqnarray*}
Since $\Phi(f,\mu) \geq \mathcal{L}(\mathcal{A}), (f,\mu) \in \mathcal{A}$, it follows
that $|\theta|\leq \mathcal{U}(\mathcal{A})-\mathcal{L}(\mathcal{A})$, and so we obtain
\begin{eqnarray*}
\int{\theta d\mathbb{Q}} &=&
\int_{\{\theta > \e\d' \}}{\theta d\mathbb{Q}}+\int_{\{\theta \leq \e\d' \}}
{\theta d\mathbb{Q}}\\
&>&\e\d' \mathbb{Q}\bigl(\{\theta > \e\d' \}\bigr)+\int_{\{\theta \leq e\d'
\}}{\theta d\mathbb{Q}}\\
&\geq &\e\d' \mathbb{Q}\bigl(\{\theta > \e\d' \}\bigr)+\int_{\{\theta \leq 0
\}}{\theta d\mathbb{Q}}\\
&\geq &\e\d' \mathbb{Q}\bigl(\{\theta > \e\d' \}\bigr)+\int_{\{\theta < -\tau'
\}}{\theta d\mathbb{Q}}+\int_{\{-\tau' \leq \theta \leq 0
\}}{\theta d\mathbb{Q}}\\
&\geq &\e\d' \mathbb{Q}\bigl(\{\theta > \e\d' \}\bigr)
-\bigl(\mathcal{U}(\mathcal{A})-\mathcal{L}(\mathcal{A})\bigr)
\mathbb{Q}\bigl(\{\theta  < -\tau'
\}\bigr)- \tau'\mathbb{Q}\bigl(\{-\tau' \leq \theta  \leq 0
\}\bigr)
\\
&\geq &\e\d'\e'-\bigl(\mathcal{U}(\mathcal{A})-\mathcal{L}(\mathcal{A})\bigr)h(\d) -\tau'\,.
\end{eqnarray*}
Therefore, for  any strictly positive solution  $\d >0$ to
\[h(\d)+\tau \leq  \frac{\e\d'\e'}{\mathcal{U}(\mathcal{A})-\mathcal{L}(\mathcal{A})} \]
we have
\[ \E_{\mathbb{Q}}[\theta] =\int{\theta d\mathbb{Q}} > 0,\]
where we recall that the function  $\theta$ depends on $\d$,
and therefore trivially
\[ \sup_{\mathbb{Q}\in \mathfrak{Q},\, \Dmap\in \Dmaps} \mathbb{E}_{q \sim \mathbb{Q}} \left[ \sup_{(f,\mu)\in  \Psi^{-1}(q)} \bigl(\Phi(f,\mu)-\lambda\bigr)\Dmap(f,\mu)[B_{\d}] \right]
 > 0.\]
The assertion  then follows from  \cite[Thm.~4.11]{BayesOUQ}.

\subsection{Proof of Theorem \ref{thm_shiva_singlesample}}
We will apply
the Mass Supremum Equality \ref{lem_mass_sup}, the Mass Infimum Inequality \ref{lem_mass_inf},
and the Mass of First Moment Inequality \ref{massfirstmoment}. To that end,
 define the  events
\begin{equation*}
\label{mass_sup_event}
   S_{\e,\d}=\Bigl\{q\in M^{n}: \exists \mu \in \Psi^{-1}q: \mu(B_{\d})> \e\Bigr\}
\end{equation*}
\begin{equation*}
\label{mass_inf_event}
   I_{\d}=\Bigl\{q\in M^{n}: \exists \mu \in \Psi^{-1}q: \mu(B_{\d})=0\Bigr\}
\end{equation*}
\begin{equation*}
\label{massfirstmoment_event}
   FM_{\d'}=\Bigl\{q\in M^{n}: q_{1} \in (1-\d',1] \Bigr\}
\end{equation*}
First observe that some endpoint conditions have zero mass. For example,
\[\mathbb{Q}(S_{\e,\d}):=  \mathbb{Q}\bigl(\{q: \exists \mu \in \Psi^{-1}q: \mu(B_{\d})> \e\}\bigr)
= \mathbb{Q}\bigl(\{q: \exists \mu \in \Psi^{-1}q: \mu(B_{\d})\geq \e\}\bigr)\]
and\[\mathbb{Q}(FM_{\d'}):=\mathbb{Q}\bigl(\{q\in M^{n}: q_{1} \in (1-\d',1] \}\bigr)=\mathbb{Q}\bigl(\{q\in M^{n}: q_{1} \in [1-\d',1] \}\bigr) \, .\]
Consequently,
the Mass Supremum  Equality \ref{lem_mass_sup} asserts that
\begin{equation*}
\label{mass_sup1}
\mathbb{Q}(S_{\e,\d}) = \mathbb{Q}\bigl(\{q: \exists \mu \in \Psi^{-1}q: \mu(B_{\d})\geq \e\}\bigr)\geq
(1- \e)^{n}
\end{equation*}
where the right-hand side is independent of $\d$,
the Mass Infimum Inequality \ref{lem_mass_inf} asserts that
\begin{equation*}
\label{mass_inf1}
\mathbb{Q}(I_{\d})\geq 1- \d (2e)^{2n} \, ,
\end{equation*}
and  the Mass of First Moment Inequality \ref{massfirstmoment} asserts that
\begin{equation*}
\label{massfirstmoment1}
\mathbb{Q}(FM_{\d'})=\mathbb{Q}\bigl(\{q\in M^{n}: q_{1} \in [1-\d',1] \}\bigr) \geq (\d')^{n}\, .
\end{equation*}

Define the events
\[\mathcal{Q}_{\d}:=\bigl\{q: \inf_{\mu\in  \Psi^{-1}(q)} \mu[B_{\d}]=0\big\}\]
\begin{equation*}
U_{\e,\d}:= \Bigl\{ q: \sup_{\mu\in \Psi^{-1}(q),\, \mu[B_{\d}]> \e}\E_{\mu}[X] >1  -\delta' \Bigr\}
\end{equation*}
Then since
\[ \mathcal{Q}_{\d} \supset I_{\d}\]
we have
\[\mathbb{Q}\bigl( \mathcal{Q}_{\d}\bigr) \geq \mathbb{Q}\bigl( I_{\d}\bigr) \geq  1- \d (2e)^{2n}\]
and since
\begin{eqnarray*}
U_{\e,\d}&=& \Bigl\{ q: \sup_{\mu\in \Psi^{-1}(q),\, \mu[B_{\d}]> \e}\E_{\mu}[X] >1  -\delta' \Bigr\}\\
&=&  \Bigl\{ q: q_{1} \in (1-\d',1], \exists \mu\in \Psi^{-1}q: \mu[B_{\d}]>\e \Bigr\}\\
&=& S_{\e,\d} \cap FM_{\d'}
\end{eqnarray*}
we have
\begin{eqnarray*}
\mathbb{Q}\bigl(U_{\e,\d}\bigr)& = & \mathbb{Q}\bigl( S_{\e,\d} \cap FM_{\d'}\bigr)\\
&=& 1-\mathbb{Q}\bigl((S_{\e,\d} \cap FM_{\d'})^{c}\bigr)\\
&=& 1-\mathbb{Q}\bigl(S^{c}_{\e,\d} \cup FM_{\d'}^{c}\bigr)\\
&=& 1-\mathbb{Q}\bigl(S^{c}_{\e,\d}\bigr) - \mathbb{Q}\bigl(FM_{\d'}^{c}\bigr)\\
&=& \mathbb{Q}\bigl(S_{\e,\d}\bigr) + \mathbb{Q}\bigl(FM_{\d'}\bigr)-1\\
&=& \bigl(1-\e\bigr)^{n}-1 +(\d')^{n}\\
&=&  (\d')^{n}-n\e
\end{eqnarray*}
Consequently, if we choose $\e:=\frac{(\d')^{n}}{2n}$, then
\[\mathbb{Q}\bigl(U_{\e,\d}\bigr) \geq \frac{(\d')^{n}}{2},\]
so that
the assumptions \eqref{eq:dto0app} and \eqref{eq:djkdjehjehj33app}, expressed as
\[ \mathbb{Q}\bigl(\mathcal{Q}_{\d}\bigr) \geq 1-h(\d),\quad  \d >0\]
\[ \mathbb{Q}\bigl(U_{\e,\d}\bigr) \geq \e',  \quad \d >0\, ,\]
are satisfied with
$\e':= \frac{(\d')^{n}}{2}$, $\e:=\frac{(\d')^{n}}{2n}$, and
$h(\d):= \d (2e)^{2n}\, .$
We can solve
\begin{eqnarray*}
h(\d)& \leq&  \frac{\e\d'\e'}{\mathcal{U}(\mathcal{A})-\mathcal{L}(\mathcal{A})}\\
&=& \e\d'\e'\\
&=&\frac{(\d')^{n}}{2n} \d' \frac{(\d')^{n}}{2}\\
&=& \frac{(\d')^{2n+1}}{4n}
\end{eqnarray*}
by choosing $\d \leq \frac{1}{4n} \bigl(\d'\bigr)^{2n+1}\bigl(2e\bigr)^{-2n}$.

\subsection{Proof of Lemma \ref{lem_mass_sup} }
\label{sec_mass_sup_proof}
For
$ M^{n}_{\e}:=\bigl\{q \in  M^{n}: \exists \mu \in\Psi^{-1}q:  \mu(\{t_{*}\})\geq \e\bigr\}$, it follows that
$\Psi \mu \in  M^{n}_{\e}$ if and only if $\mu =\epsilon \delta_{t_{*}}+(1-\epsilon)\mu_{*}$ with $\mu_{*} \in \mathcal{M}(I)$.
For such a $\mu$ it follows that $\Psi \mu = \epsilon \Psi \delta_{t_{*}}+(1-\epsilon)\Psi\mu_{*}$ and therefore
\[ M^{n}_{\e}=\epsilon \Psi \delta_{t_{*}}+(1-\epsilon)M^{n}\] from which we conclude that
\[Vol( M^{n}_{\e})=(1-\epsilon)^{n}Vol(M^{n}),\]
establishing the assertion.

\subsection{Proof of Lemma \ref{lem_mass_inf}}
First consider the odd case, $n=2m-1$.
We utilize the bijective principal representation $\phi^{p}_{ol}: \Lambda^{m-1} \times T^{m} \rightarrow Int(M^{2m-1})$ defined in \eqref{def_phi_pol} and
\[ |det(d\phi^{p}_{ol})|(\lambda,t)= \mathcal{J}^{p}_{ol}(t)
\bigl(1-\sum_{j=1}^{m-1}{\lambda_{j}}\bigr)\prod_{j=1}^{m-1}{\lambda_{j}} \]
where
\[\mathcal{J}^{p}_{ol}(t):=
 \Delta^{4}_{m}(t)
        \]
from Proposition \ref{prop_Jpo} along with the change of variables formula
\eqref{id_cov}.

Fix $t_{*}\in (0,1)$ and let
\[ T_{\d}^{m}:= \bigl\{ (t_{1},..,t_{m})\in   T^{m}:
t_{j} \notin B_{\d}(t_{*}) , j=1,..,m\, \bigr\}  \]
denote those sequences which have no point a distance less than $\d$ from $t_{*}$.
It follows that
\[M^{2m-1}_{\d} \supset \phi\bigl(\Lambda^{m-1}\times T_{\d}^{m}\bigr)\]
and therefore
\begin{equation}
\label{e11111}
Vol\Bigl(M^{2m-1}_{\d}\Bigr)  \geq  Vol\Bigl(\phi^{p}_{ol}\bigl(\Lambda^{m-1}\times T_{\d}^{m} \bigr)\Bigr)\, .
\end{equation}
We bound the righthand side from below using the change of variables formula
\eqref{id_cov} as
\begin{eqnarray*}
 Vol\Bigl(\phi^{p}_{ol}\bigl(\Lambda^{m-1}\times T_{\d}^{m} \bigr)\Bigr)
&=&
\int_{\Lambda^{m-1} \times  T_{\d}^{m}}{|det(d\phi^{p}_{ol})|
 }\\
&=&\int_{\Lambda^{m-1}}{\bigl(1-\sum_{j=1}^{m-1}{\lambda_{j}}\bigr)\bigl(\prod_{j=1}^{m-1}{\lambda_{j}}\bigr)
d\lambda} \int_{T_{\d}^{m}}{\mathcal{J}^{p}_{ol}}\, ,
\end{eqnarray*}
and then bounding
\begin{eqnarray*}
\int_{T_{\d}^{m}}{\mathcal{J}^{p}_{ol}dt}
&=& \int_{T_{\d}^{m}}{\Delta_{m}^{4}(t)dt}\\
&=& \frac{1}{m!} \int_{I_{\d}^{m}}{\Delta_{m}^{4}(t)dt}
\end{eqnarray*}
where
\[ I_{\d}^{m}:= \bigl\{ (t_{1},..,t_{m})\in   I^{m}:
t_{j} \notin B_{\d}(t_{*}) , j=1,..,m\, \bigr\} . \]
To bound this from below we bound the integral over
$ \bigl(I_{\d}^{m}\bigr)^{c}$ from above.
To that end, let
\[ I_{\d,j}^{m}:= \bigl\{ (t_{1},..,t_{m})\in   I^{m}:
t_{j} \in B_{\d}(t_{*})\bigr \},\quad  j=1,..,m\,, \]
so that
\[ \bigl(I_{\d}^{m}\bigr)^{c}=  \cup_{j} I_{\d,j}^{m}\, .\]
Therefore, using a union bound and the symmetry of $\Delta$ we have
\begin{eqnarray*}
 \int_{\bigl(I_{\d}^{m}\bigr)^{c}}{\Delta_{m}^{4}(t)dt}&=&
 \int_{\cup_{j'=1}^{m}{I^{m}_{\d,j'}}}{\Delta_{m}^{4}(t)dt}\\
&\leq &\sum_{j'=1}^{m}{\int_{I^{m}_{\d,j'}}{\Delta_{m}^{4}(t)dt}}\\
& =&m\int_{I^{m}_{\d,1}}{\Delta_{m}^{4}(t)dt}\\
& = & m\int_{I^{m}_{\d,1}}{\prod_{1 \leq j < k \leq m}{(t_{k}-t_{j})^{4}}dt_{1}\cdots dt_{m}}\\
& \leq & m\int_{I^{m}_{\d,1}}{\prod_{2 \leq j < k \leq m}{(t_{k}-t_{j})^{4}}dt_{1}\cdots dt_{m}}\\
& = & mVol(B_{\d})\int_{I^{m-1}}{\prod_{2 \leq j < k \leq m}{(t_{k}-t_{j})^{4}}dt_{2}\cdots dt_{m}}\\
& = &mVol(B_{\d})\int_{I^{m-1}}{\Delta_{m-1}^{4}(t)dt}\\
&\leq & 2m\d S_{m-1}(1,1,2)\\
\end{eqnarray*}
and so obtain
\begin{eqnarray*}
\frac{Vol\Bigl(\phi(\Lambda^{m-1}\times T_{\d}^{m})\Bigr)}{Vol\Bigl(\phi(\Lambda^{m-1}\times T^{m})\Bigr)}
&=& \frac{\int_{I^{\d}_{m}}{\Delta_{m}^{4}(t)dt}}{\int_{I^{m}}{\Delta_{m}^{4}(t)dt}}\\
&\geq& 1- 2m\d \frac{S_{m-1}(1,1,2)}{S_{m}(1,1,2)}\, .
\end{eqnarray*}
Using Selberg's formulas \eqref{selbergformula} we compute
\begin{eqnarray*}
\frac{S_{m-1}(1,1,2)}{S_{m}(1,1,2)}&=&
\frac{ \prod_{j=0}^{m-2}{\frac{\Gamma(1+ 2j)^{2}
\Gamma(3+2j)}{2\Gamma(2(m+j)-2) }}}{ \prod_{j=0}^{m-1}{\frac{\Gamma(1+ 2j)^{2}
\Gamma(3+2j)}{2\Gamma(2(m+j)) }}}\\
&=&\frac{2\Gamma(4m-4)}{\Gamma(2m-1)^{2}\Gamma(2m+1)}
\frac{ \prod_{j=0}^{m-1}{\frac{\Gamma(1+ 2j)^{2}\Gamma(3+2j)}{2\Gamma(2(m+j)-2) }}}{ \prod_{j=0}^{m-1}{\frac{\Gamma(1+ 2j)^{2}
\Gamma(3+2j)}{2\Gamma(2(m+j)) }}}\\
&=&\frac{2\Gamma(4m-4)}{\Gamma(2m-1)^{2}\Gamma(2m+1)}
 \prod_{j=0}^{m-1}{\frac{\Gamma(2(m+j)) }{\Gamma(2(m+j)-2)}} \\
&=&\frac{2\Gamma(4m-4)}{\Gamma(2m-1)^{2}\Gamma(2m+1)}
 \frac{\Gamma(4m-2) }{\Gamma(2m-2)} \, .
\end{eqnarray*}
To bound
$\frac{2\Gamma(4m-4)\Gamma(4m-2)}{\Gamma(2m-1)^{2}\Gamma(2m)\Gamma(2m-2)}$ from above we use
the binomial relation (see e.g.~\cite[Eq.~6.1.21]{AbramowitsStegun}) for the Gamma function
\begin{equation}
\label{binomial_gamma}
\binom{z}{w}=\frac{\Gamma(z+1)}{\Gamma(w+1)\Gamma(z-w+1)}
\end{equation}
and the inequality (see e.g.~\cite[Eq.~C.5]{CormenLeisersonRivestStein})
\begin{equation}
\label{binomial_upperbound}
\bigl(\frac{z}{w}\bigr)^{w} \leq \binom{z}{w}\leq \bigl(\frac{ez}{w}\bigr)^{w}
\end{equation}
to obtain
\begin{eqnarray*}
\frac{\Gamma(4m-4)\Gamma(4m-2)}{\Gamma(2m-1)^{2}\Gamma(2m)\Gamma(2m-2)}&=&
\frac{\Gamma(4m-4)}{\Gamma(2m-1)\Gamma(2m-2)}\cdot \frac{\Gamma(4m-2)}{\Gamma(2m-1)\Gamma(2m)}\\
&=& \binom{4m-5}{2m-2}\binom{4m-3}{2m-2}\\
&=& \binom{4m-5}{2m-2}\binom{4m-3}{2m-1}\\
&\leq& \bigl(e\frac{4m-5}{2m-2}\bigr)^{2m-2}\bigl(e\frac{4m-3}{2m-1}\bigr)^{2m-1}\\
&\leq& \bigl(2e)^{2m-2}\bigl(2e\bigr)^{2m-1}\\
&=& \bigl(2e)^{4m-3}\\
&\leq &\frac{1}{2} \bigl(2e\bigr)^{4m-2}\, .
\end{eqnarray*}
Recalling \eqref{e11111} establishes the assertion for $n=2m-1$.

Now consider the even case $n-2m$.
We utilize the bijective principal representation $\phi^{p}_{el}: \Lambda^{m} \times T^{m} \rightarrow Int(M^{2m})$ defined in \eqref{def_phi_pel}  and, proceeding as in the odd case, we obtain
\begin{eqnarray*}
\frac{Vol\Bigl(M^{2m}_{\d}\Bigr)}{Vol\Bigl(M^{2m}\Bigr)}
&\geq& 1- 2m\d \frac{S_{m-1}(3,1,2)}{S_{m}(3,1,2)}\\
\end{eqnarray*}
Using Selberg's formulas \eqref{selbergformula} we compute
\begin{eqnarray*}
\frac{S_{m-1}(3,1,2)}{S_{m}(3,1,2)}&=&
\frac{ \prod_{j=0}^{m-2}{\frac{\Gamma(1+ 2j)
\Gamma(3+2j)^{2}}{2\Gamma(2(m+j)) }}}{ \prod_{j=0}^{m-1}{\frac{\Gamma(1+ 2j)
\Gamma(3+2j)^{2}}{2\Gamma(2(m+j)+2) }}}\\
&=&\frac{2\Gamma(4m-2)}{\Gamma(2m-1)^{2}\Gamma(2m+1)}
\frac{ \prod_{j=0}^{m-1}{\frac{\Gamma(1+ 2j)
\Gamma(3+2j)^{2}}{2\Gamma(2(m+j)) }}}{ \prod_{j=0}^{m-1}{\frac{\Gamma(1+ 2j)
\Gamma(3+2j)^{2}}{2\Gamma(2(m+j)+2) }}}\\
&=&\frac{2\Gamma(4m-2)}{\Gamma(2m-1)^{2}\Gamma(2m+1)}
 \prod_{j=0}^{m-1}{\frac{\Gamma(2(m+j)+2) }{\Gamma(2(m+j))}} \\
&=&\frac{2\Gamma(4m-2)}{\Gamma(2m-1)^{2}\Gamma(2m+1)}
  \frac{\Gamma(4m)}{\Gamma(2m)}\, .
\end{eqnarray*}

We will now use the  Beta function
           \begin{equation}\label{eq:beta1}
            B(a,b):=\int_{0}^{1}{t^{a-1}(1-t)^{b-1}dt},   \quad a >0, b>0\,,
            \end{equation}
          and the identity
           \begin{equation}\label{eq:beta2} B(a,b) =\frac{\Gamma(a)\Gamma(b)}{\Gamma(a+b)}\, ,\end{equation}
           see e.g.~\cite[Pg.~258]{AbramowitsStegun},
where $\Gamma$ is the gamma function.

To bound
$\frac{2\Gamma(4m-2)\Gamma(4m)}{\Gamma(2m-1)^{2}\Gamma(2m)^{2}}$ from above we use
the inequality
\[ B(a,a) =\frac{\Gamma(a)^{2}}{\Gamma(2a)} \geq \frac{4}{a}2^{-2a}\]
from Proposition \ref{beta_lowerbound}
to obtain
\begin{eqnarray*}
\frac{\Gamma(4m-2)\Gamma(4m)}{\Gamma(2m-1)^{2}\Gamma(2m)^{2}}&=&
\frac{1}{B(2m-1,2m-1)}\frac{1}{B(2m,2m)}\\
&\leq & \frac{2m-1}{4}2^{4m-2}\frac{2m}{4}2^{4m}\\
&\leq & \frac{2m}{4}2^{4m-2}\frac{2m}{4}2^{4m}\\
&\leq&\frac{1}{16} m^{2}2^{8m}
\end{eqnarray*}
Finally, we apply the inequality
\[ m^{2} \leq 8 (\frac{e}{2})^{4m}\]
from Proposition \ref{prop_tech} to conclude that
\begin{eqnarray*}
\frac{2\Gamma(4m-2)\Gamma(4m)}{\Gamma(2m-1)^{2}\Gamma(2m)^{2}}
&\leq&\frac{1}{8} m^{2}2^{8m}\\
&\leq&(\frac{e}{2})^{4m}2^{8m}\\
&=&\bigl(2e\bigr)^{4m}
\end{eqnarray*}
thus establishing the assertion for $n=2m$.

\subsection{Proof of Lemma \ref{massfirstmoment}}

According to Chang, Kemperman, Studden \cite[Thm.~1.3]{ChangKempermanStudden} one can show, using
Skibinsky's canonical coordinates for the moment
problem \cite{Skibinsky}, that the uniform distribution on $M^{n}$ marginalizes to  a  Beta distribution corresponding to
$B(n,n)$ (see \eqref{eq:beta1} and \eqref{eq:beta2}) on the first moment. Consequently,
\begin{eqnarray*}
\frac{Vol\bigl( q \in M^{n}: q_{1} \in [1-\d,1]\bigr)}{Vol(M^{n})}&=&
\frac{1}{B(n,n)}\int_{1-\d}^{1}{t^{n-1}(1-t)^{n-1}dt}\\
&=& I_{\d}(n,n)
\end{eqnarray*}
where $I_{\d}(n,n)$ is the  Incomplete Beta function (see  e.g.~\cite[Pg.~258]{AbramowitsStegun}).
 Using the binomial relations \eqref{binomial_gamma} and
and \eqref{binomial_upperbound}, for the upper bound  we obtain
\begin{eqnarray*}
I_{\d}(n,n)&:=&\frac{1}{B(n,n)}\int_{0}^{\d}{t^{n-1}(1-t)^{n-1}dt}\\
& \leq &
\frac{1}{B(n,n)}\int_{0}^{\d}{t^{n-1}dt}\\
&=& \frac{\d^{n}}{nB(n,n)}\\
&=&  \d^{n} \frac{\Gamma(2n)}{n\Gamma(n)^{2}}\\
&=&  \d^{n} \binom{2n-1}{n}\\
&\leq &  \d^{n} \Bigl(e \frac{2n-1}{n}\Bigr)^{n}\\
&\leq &  \d^{n} \bigl(2e\bigr)^{n}
\end{eqnarray*}
and for the lower bound
\begin{eqnarray*}
I_{\d}(n,n)&:=&\frac{1}{B(n,n)}\int_{0}^{\d}{t^{n-1}(1-t)^{n-1}dt}\\
& \geq & \frac{1}{B(n,n)}\bigl(1-\d)^{n-1}\int_{0}^{\d}{t^{n-1}dt}\\
& = & \frac{1}{nB(n,n)}\bigl(1-\d)^{n-1}\d^{n}\\
& = & \binom{2n-1}{n-1} \bigl(1-\d)^{n-1}\d^{n}\\
& \geq & \Bigl(\frac{2n-1}{n-1}\Bigr)^{n-1} \bigl(1-\d)^{n-1}\d^{n}\\
& \geq & 2^{n-1} \bigl(1-\d)^{n-1}\d^{n}\\
& \geq & \d^{n}
\end{eqnarray*}
where the assumption $\d \leq \frac{1}{2}$  was used in the last step.

\subsection{Proof of Proposition \ref{prop_Jco}}

The following identity of
Karlin and Shapley \cite[Proof of Thm.~6.2]{KarlinStudden:1966} will be useful in all the Jacobian determinant calculations of this paper:
For $t_{1} < s_{1} < \cdots < t_{m} < s_{m}$, we have
\begin{equation}
\label{karlinstudden}
 \frac{\partial^{m}}{\partial s_{1}\cdots \partial s_{m}}
\Delta(t_{1},s_{1},..,
t_{m},s_{m})|_{(s_{1},..,s_{m}) =
(t_{1},..,t_{m})}=
\Delta^{4}_{m}(t)\, .
\end{equation}

We can develop the upper and lower configurations simultaneously, by introducing
a point $t_{0} \in \{0,1\}$ and representations $\phi_{t_{0}}$ where when $t_{0}=0$
we have
$\phi_{0}=\phi^{c}_{ol}$ defined in \eqref{def_phi_col} and when $t_{0}=1$ we have $\phi_{1}=\phi^{c}_{ou}$
defined in \eqref{def_phi_cou}.
So, let us use this notation and  a change of indices,  and consider the two   maps
\[ \phi_{t_{0}}:\Lambda^{m}\times T^{m-1} \rightarrow Int(M^{2m-1}),\quad  t_{0}=0,1 \] defined by
\begin{eqnarray}
\label{phi_t0}
 \phi_{t_{0}}(\lambda,t)
&= &\Psi  \Bigl(  \sum_{j=0}^{m-1}{\lambda_{j}\delta_{t_{j}}}   +(1-\sum_{j=0}^{m-1}{\lambda_{j}})\delta_{t_{*}}\Bigr)\notag\\
&=& \Bigl(\sum_{j=0}^{m-1}{\lambda_{j}t_{j}^{i}}+ (1-\sum_{j=0}^{m-1}{\lambda_{j})t_{*}^{i}}\Bigr)_{i=1}^{2m-1}\, .
\end{eqnarray}
In this notation,  Proposition \ref{prop_Jco}  becomes
\begin{prop}
\label{prop_Jco_2}
For $t_{0}=0,1$ we have
\[ |det(d\phi_{t_{0}})|(\lambda,t)= \mathcal{J}_{t_{0}}(t)
\prod_{j=1}^{m-1}{\lambda_{j}} \]
where
\begin{equation*}
\label{Jac_imp0}
\mathcal{J}_{t_{0}}(t)=
 \bigl|t_{0}-t_{*}\bigr| \prod_{j=1}^{m-1}{(t_{j}-t_{*})^{2}}\prod_{j=1}^{m-1}{(t_{j}-t_{0})^{2}}
\cdot \Delta_{m-1}^{4}(t)
\end{equation*}
\end{prop}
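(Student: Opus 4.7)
The plan is to reduce the Jacobian to a confluent (Hermite--)Vandermonde determinant and then invoke the classical product formula. First I compute the partial derivatives of the $i$-th component ($i=1,\ldots,2m-1$) of
\[\phi_{t_0}(\lambda,t)=\Bigl(\sum_{j=0}^{m-1}\lambda_j t_j^{i}+\bigl(1-\sum_{j=0}^{m-1}\lambda_j\bigr)t_*^{i}\Bigr)_{i=1}^{2m-1}\]
with respect to its $2m-1$ free parameters $\lambda_0,\ldots,\lambda_{m-1},t_1,\ldots,t_{m-1}$; note $t_0\in\{0,1\}$ is fixed and contributes no $t$-column. These derivatives are $\partial_{\lambda_\ell}\phi^i=t_\ell^{i}-t_*^{i}$ and $\partial_{t_\ell}\phi^i=i\lambda_\ell t_\ell^{i-1}$. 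I then factor $\lambda_j$ from each of the $m-1$ columns $\partial/\partial t_j$, pulling the prefactor $\prod_{j=1}^{m-1}\lambda_j$ out of the determinant and leaving a $(2m-1)\times(2m-1)$ matrix $M$ whose entries are $t_\ell^{i}-t_*^{i}$ in the $\lambda$-columns and $it_\ell^{i-1}$ in the $t$-columns.

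Next I augment $M$ to a $(2m)\times(2m)$ matrix $A$ by prepending a row indexed by $i=0$ and a new ``$t_*$-column'' $(t_*^{i})_{i=0}^{2m-1}$. With the convention $0\cdot t_\ell^{-1}:=0$, the new row reads $(1,0,\ldots,0)$, so cofactor expansion along it yields $|\det A|=|\det M|$. I then execute $m$ column operations that add the new $t_*$-column back into each of the columns still carrying $t_\ell^{i}-t_*^{i}$ for $\ell=0,\ldots,m-1$; these operations preserve the determinant and convert $A$ into the confluent Vandermonde matrix associated with the nodes $t_*,t_0,t_1,\ldots,t_{m-1}$ having multiplicities $(1,1,2,\ldots,2)$ respectively.

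Finally, I invoke the classical Hermite--Vandermonde product formula $\prod_{i<j}(x_j-x_i)^{n_i n_j}$, which for this multiplicity pattern evaluates to
\[\pm(t_0-t_*)\prod_{\ell=1}^{m-1}(t_\ell-t_*)^{2}\prod_{\ell=1}^{m-1}(t_\ell-t_0)^{2}\cdot\Delta_{m-1}^{4}(t).\]
Taking absolute values and reinstating the prefactor $\prod_{j=1}^{m-1}\lambda_j$ yields the claimed formula. The Hermite--Vandermonde identity itself is a standard confluence limit: start from an ordinary Vandermonde on doubled auxiliary nodes $(t_\ell,s_\ell)$ and send $s_\ell\to t_\ell$. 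The Karlin--Shapley identity \eqref{karlinstudden} cited earlier records exactly this coalescence in the all-double case, and the mixed-multiplicity variant $(1,1,2,\ldots,2)$ needed here is a routine adaptation (or can be cited directly from the confluent Vandermonde literature).

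The main obstacle is essentially bookkeeping: tracking signs through the augmentation and column-operation steps, and verifying the exponents $n_in_j$ for the specific multiplicity vector. Since the assertion is stated in absolute value the signs drop out, and the combinatorics of pairwise differences matches the three product factors in $\mathcal{J}_{t_0}(t)$ precisely---the single simple--simple pair $(t_*,t_0)$ contributes exponent $1$, each of the $2(m-1)$ simple--double pairs contributes exponent $2$, and each double--double pair $(t_k,t_\ell)$ with $1\leq k<\ell\leq m-1$ contributes exponent $4$, yielding $|t_0-t_*|\prod_{\ell}(t_\ell-t_*)^{2}\prod_{\ell}(t_\ell-t_0)^{2}\Delta_{m-1}^{4}(t)=\mathcal{J}_{t_0}(t)$.
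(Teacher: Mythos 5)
Your proof is correct and follows essentially the same route as the paper: both compute the same Jacobian, factor out $\prod\lambda_j$, and augment to the identical $2m\times 2m$ confluent Vandermonde on the nodes $t_*,t_0,t_1,\ldots,t_{m-1}$ with multiplicities $(1,1,2,\ldots,2)$. The only difference is cosmetic -- you invoke the general Hermite--Vandermonde product formula $\prod_{k<k'}(x_{k'}-x_k)^{n_k n_{k'}}$ directly, whereas the paper re-derives it in place by introducing auxiliary nodes $s_1,\ldots,s_{m-1}$, factoring the ordinary Vandermonde $\Delta(t_*,t_0,t_1,s_1,\ldots,t_{m-1},s_{m-1})$ recursively, and then applying $\partial^{m-1}/\partial s_1\cdots\partial s_{m-1}$ at $s=t$ via the Karlin--Shapley identity \eqref{karlinstudden}, which is precisely the all-double-multiplicity confluence you describe.
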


The differential of $\phi_{t_{0}}$ is determined by
\[\frac{ \partial \phi^{i}_{t_{0}}}{\partial \lambda_{j}}=  t^{i}_{j}-t^{i}_{*}, \quad j=0,..,m-1
\]
and
\[ \frac{ \partial \phi^{i}_{t_{0}}}{\partial t_{j}}= i\lambda_{j}t^{i-1}_{j}  \quad j=1,..,m-1
\]
for $i=1,..,2m-1,$
from which we conclude that
\[ |det(d\phi_{t_{0}})|= |\mathcal{J}_{t_{0}}|
\prod_{j=1}^{m-1}{\lambda_{j}} \]
where
\begin{eqnarray*}
\mathcal{J}_{t_{0}}&=&
\begin{vmatrix}  t_{0}-t_{*} & t_{1}-t_{*} & 1 &  \cdots & t_{m-1}-t_{*} & 1   \\
t^{2}_{0}-t^{2}_{*}& t^{2}_{1}-t^{2}_{*} & 2t_{1}  & \cdots &   t^{2}_{m-1}-t^{2}_{*} & 2t_{m-1}  \\
\vdots    &  \vdots & \vdots    &  \cdots & \vdots    &  \vdots    \\
t_{0}^{2m-1}-t_{*}^{2m-1}& t^{2m-1}_{1}-t^{2m-1}_{*} & (2m-1)t^{2m-2}_{1}&\cdots  & t^{2m-1}_{m-1}-t^{2m-1}_{*}
  &   (2m-1)t^{2m-2}_{m-1}
\end{vmatrix}
\\
& &\\
&=&
\begin{vmatrix}
1&    1&    1&     0&    \cdots &1     & 0\\
t_{*}&  t_{0} & t_{1} & 1 &  \cdots & t_{m-1} & 1   \\
t_{*}^{2} & t^{2}_{0}& t^{2}_{1} & 2t_{1}  & \cdots &   t^{2}_{m-1} & 2t_{m-1}  \\
\vdots& \vdots&\vdots    &  \vdots & \cdots    &  \vdots & \vdots    \\
t_{*}^{2m-1} & t_{0}^{2m-1}& t^{2m-1}_{1} & (2m-1)t^{2m-2}_{1}&\cdots  & t^{2m-1}_{m-1}
  &   (2m-1)t^{2m-2}_{m-1}
\end{vmatrix}
\end{eqnarray*}

To
 evaluate $\mathcal{J}_{t_{0}}$ for $t_{0}=0,1$,
 let
 $s_{1},..,s_{m-1}$ satisfy
$t_{j} < s_{j} < t_{j+1}, j=1,..,m-1$ and define the Vandermonde determinant
\begin{equation*}
\label{Jaux}
\mathcal{J}(s_{1},..,s_{m-1}):=
\begin{vmatrix}
1&    1&    1&     s_{1}^{0}&    \cdots &1     &  s_{m-1}^{0}\\
t_{*}&  t_{0} & t_{1} & s_{1} &  \cdots & t_{m-1} & s_{m-1}   \\
t_{*}^{2} & t^{2}_{0}& t^{2}_{1} & s^{2}_{1}  & \cdots &   t^{2}_{m-1} & s^{2}_{m-1}  \\
\vdots& \vdots&\vdots    &  \vdots & \cdots    &  \vdots & \vdots    \\
t_{*}^{2m-1} & t_{0}^{2m-1}& t^{2m-1}_{1} &s^{2m-1}_{1} &\cdots  & t^{2m-1}_{m-1}
  &  s^{2m-1}_{m-1}
\end{vmatrix}
\end{equation*}
and observe that the multilinearity of the determinant shows that
\[
 \mathcal{J}_{t_{0}}=\frac{\partial^{m-1}}{\partial s_{1}\cdots \partial s_{m-1}}\mathcal{J}(s_{1},..,s_{m-1}) |_{(s_{1},..,s_{m-1})=
(t_{1},..,t_{m-1})}
\]
To evaluate this differentiation,
observe that
\begin{eqnarray*}
\mathcal{J}(s_{1},..,s_{m-1})&=&\Delta(t_{*}, t_{0}, t_{1},s_{1},...,t_{m-1},s_{m-1})\\
&=& \bigl(t_{0}-t_{*}\bigr)\Bigl(\prod_{j=1}^{m-1}{(t_{j}-t_{*})(s_{j}-t_{*})}\Bigr)
\Bigl(\prod_{j=1}^{m-1}{(t_{j}-t_{0})(s_{j}-t_{0})}\Bigr) \\
&&\Delta(t_{1},s_{1},...,t_{m-1},s_{m-1})\, ,
\end{eqnarray*}
from which we conclude that
\begin{eqnarray*}
\mathcal{J}_{t_{0}}
&=& \frac{\partial^{m-1}}{\partial s_{1}\cdots \partial s_{m-1}}
\mathcal{J}(s_{1},..,
s_{m-1})|_{(s_{1},..,s_{m-1})=(t_{1},..,t_{m-1})}\\
&=&  \bigl(t_{0}-t_{*}\bigr)\cdot \prod_{j=1}^{m-1}{(t_{j}-t_{*})^{2}}
\prod_{j=1}^{m-1}{(t_{j}-t_{0})^{2}}\cdot
\frac{\partial^{m-1}}{\partial s_{1}\cdots \partial s_{m-1}}\\&& \Delta(t_{1},s_{1},...,t_{m-1},s_{m-1})|_{(s_{1},..,s_{m-1})=(t_{1},..,t_{m-1})}
\end{eqnarray*}
Using the  identity
\eqref{karlinstudden}
we conclude that
\[\mathcal{J}_{t_{0}}=
 \bigl(t_{0}-t_{*}\bigr) \prod_{j=1}^{m-1}{(t_{j}-t_{*})^{2}}\prod_{j=1}^{m-1}{(t_{j}-t_{0})^{2}}
\Delta_{m-1}^{4}(t)
\]
thereby proving Proposition \ref{prop_Jco_2} and therefore Proposition \ref{prop_Jco}.

\subsection{Proof of Proposition \ref{prop_Jce}}

To simplify notation, let $\phi_{l}:=\phi^{c}_{el}$ defined in \eqref{def_phi_cel}
 and $\phi_{u}:=\phi^{c}_{eu}$ defined in \eqref{def_phi_ceu}.
We begin with the lower representation $\phi_{l}$.
The differential of $\phi_{l}$ is determined by
\[\frac{ \partial \phi^{i}_{l}}{\partial \lambda_{j}}=  t_{j}^{i}-t^{i}_{*}, \quad j=1,..,m\, ,
\]
and
\[ \frac{ \partial \phi^{i}_{l}}{\partial t_{j}}= i\lambda_{j}t^{i-1}_{j}  \quad j=1,..,m
\]
for $i=1,..,2m,$
from which we conclude that
\[
 |det(d\phi_{l})|= |\mathcal{J}_{l}|
\prod_{j=1}^{m}{\lambda_{j}}
\]
where
\begin{eqnarray*}
\mathcal{J}_{l}&=&
\begin{vmatrix}   t_{1}-t_{*} & 1 &  \cdots & t_{m}-t_{*} & 1    \\
 t^{2}_{1}-t^{2}_{*} & 2t_{1}  & \cdots &   t^{2}_{m}-t^{2}_{*} & 2t_{m}  \\
\vdots    &  \vdots & \vdots    &  \cdots & \vdots        \\
 t^{2m}_{1}-t^{2m}_{*} & 2mt^{2m-1}_{1}&\cdots  & t^{2m}_{m}-t^{2m}_{*}
  &   2mt^{2m-1}_{m}
\end{vmatrix}
\\
& &\\
&=&
\begin{vmatrix}
1  & 1& 0 & \cdots & 1 & 0 \\
  t_{*} & t_{1} & 1 &  \cdots & t_{m} & 1    \\
t^{2}_{*}& t^{2}_{1} & 2t_{1}  & \cdots &   t^{2}_{m} & 2t_{m} \\
\vdots    &  \vdots & \vdots    &  \cdots & \vdots    &  \vdots    \\
t_{*}^{2m}& t^{2m}_{1} & 2mt^{2m-1}_{1}&\cdots  & t^{2m}_{m}
  &   2mt^{2m-1}_{m}
\end{vmatrix}
\\
&
\end{eqnarray*}

To evaluate
$\mathcal{J}_{l}$,
 let
 $s_{1},..,s_{m}$ satisfy
$t_{j} < s_{j} < t_{j+1}, j=1,..,m$ and define
\[
\mathcal{J}(s_{1},..,s_{m}):=
\begin{vmatrix}
 1 & 1 & 1& \cdots & 1 & 1\\
  t_{*} & t_{1} & s_{1} &  \cdots & t_{m} & s_{m}    \\
t^{2}_{*}& t^{2}_{1} & s^{2}_{1}  & \cdots &   t^{2}_{m} & s^{2}_{m}  \\
\vdots    &  \vdots & \vdots    &  \cdots & \vdots        \\
t_{*}^{2m}& t^{2m}_{1} & s^{2m}_{1}&\cdots  & t^{2m}_{m}
  &   s^{2m}_{m}
\end{vmatrix}
\]
and observe that the multilinearity of the determinant shows that
\[
 \mathcal{J}_{l}=\frac{\partial^{m}}{\partial s_{1}\cdots \partial s_{m}}\mathcal{J}(s_{1},..,s_{m}) |_{(s_{1},..,s_{m})=
(t_{1},..,t_{m})}
\]
To evaluate this differentiation, observe
that
\[\mathcal{J}(s_{1},..,s_{m})=\Delta(t_{*},t_{1},s_{1},..,t_{m},s_{m})\]
and, using
 the recursion relation of the Vandermonde determinant, we obtain
\begin{eqnarray*}
 \mathcal{J}(s_{1},..,s_{m})&=&\Delta(t_{*},t_{1},s_{1},..,t_{m},s_{m})\\
&=&\prod_{j=1}^{m}{(t_{j}-t_{*})(s_{j}-t_{*})} \cdot \Delta(t_{1},s_{1},..,t_{m},
s_{m})
\end{eqnarray*}
from which we conclude that
\begin{eqnarray*}
&& \frac{\partial^{m}}{\partial s_{1}\cdots \partial s_{m}}
\mathcal{J}(s_{1},..,s_{m}) |_{(s_{1},..,s_{m}) =
(t_{1},..,t_{m})}\\
& =&
 \prod_{j=1}^{m}{(t_{j}-t_{*})^{2}} \cdot \frac{\partial^{m}}{\partial s_{1}\cdots \partial s_{m}}
\Delta(t_{1},s_{1},..,t_{m},
s_{m})|_{(s_{1},..,s_{m}) =
(t_{1},..,t_{m})}\\
&=& \prod_{j=1}^{m}{(t_{j}-t_{*})^{2}}\cdot \Delta_{m}^{4}(t)
\end{eqnarray*}
and therefore
\[
\mathcal{J}_{l}
= \prod_{j=1}^{m}{(t_{j}-t_{*})^{2}}\cdot \Delta_{m}^{4}(t)
\] thus establishing the lower identity.

Now, for the upper representation $\phi_{u}:=\phi^{c}_{eu}$,
the differential of $\phi_{u}$ is determined by
\[\frac{ \partial \phi^{i}_{u}}{\partial \lambda_{0}}=  -t^{i}_{*}, \, ,
\]
\[\frac{ \partial \phi^{i}_{u}}{\partial \lambda_{j}}=  t_{j}^{i}-t^{i}_{*}, \quad j=1,..,m-1\, ,
\]
\[\frac{ \partial \phi^{i}_{u}}{\partial \lambda_{m}}=  1-t^{i}_{*}\, ,
\]
and
\[ \frac{ \partial \phi^{i}_{u}}{\partial t_{j}}= i\lambda_{j}t^{i-1}_{j}  \quad j=1,..,m-1
\]
for $i=1,..,2m,$
from which we conclude that
\[
 |det(d\phi_{u})|= |\mathcal{J}_{u}|
\prod_{j=1}^{m-1}{\lambda_{j}}
\]
where
\begin{eqnarray*}
\mathcal{J}_{u}&=&
\begin{vmatrix}  -t_{*} & t_{1}-t_{*} & 1 &  \cdots & t_{m-1}-t_{*} & 1 & 1-t_{*}   \\
-t^{2}_{*}& t^{2}_{1}-t^{2}_{*} & 2t_{1}  & \cdots &   t^{2}_{m-1}-t^{2}_{*} & 2t_{m-1}& 1-t^{2}_{*}  \\
\vdots    &  \vdots & \vdots    &  \cdots & \vdots    &  \vdots    \\
-t_{*}^{2m}& t^{2m}_{1}-t^{2m}_{*} & 2mt^{2m-1}_{1}&\cdots  & t^{2m}_{m-1}-t^{2m}_{*}
  &   2mt^{2m-1}_{m-1}& 1-t_{*}^{2m}
\end{vmatrix}
\\
& &\\
&=&
\begin{vmatrix}  t_{*} & t_{1} & 1 &  \cdots & t_{m-1} & 1 & 1   \\
t^{2}_{*}& t^{2}_{1} & 2t_{1}  & \cdots &   t^{2}_{m-1} & 2t_{m-1}& 1  \\
\vdots    &  \vdots & \vdots    &  \cdots & \vdots    &  \vdots    \\
t_{*}^{2m}& t^{2m}_{1} & 2mt^{2m-1}_{1}&\cdots  & t^{2m}_{m-1}
  &   2mt^{2m-1}_{m-1}& 1
\end{vmatrix}
\\
&
\end{eqnarray*}

To evaluate  $\mathcal{J}_{u}$,
 let
 $s_{1},..,s_{m-1}$ satisfy
$t_{j} < s_{j} < t_{j+1}, j=1,..,m-1$ and define
\[
\mathcal{J}(s_{1},..,s_{m-1}):=
\begin{vmatrix}  t_{*} & t_{1} & s_{1} &  \cdots & t_{m-1} & s_{m-1} & 1   \\
t^{2}_{*}& t^{2}_{1} & s^{2}_{1}  & \cdots &   t^{2}_{m-1} & s^{2}_{m-1}& 1  \\
\vdots    &  \vdots & \vdots    &  \cdots & \vdots    &  \vdots    \\
t_{*}^{2m}& t^{2m}_{1} & s^{2m}_{1}&\cdots  & t^{2m}_{m-1}
  &   s^{2m}_{m-1}& 1
\end{vmatrix}
\]
and observe that the multilinearity of the determinant shows that
\[
 \mathcal{J}_{u}=\frac{\partial^{m-1}}{\partial s_{1}\cdots \partial s_{m-1}}\mathcal{J}(s_{1},..,s_{m-1}) |_{(s_{1},..,s_{m-1})=
(t_{1},..,t_{m-1})}
\]
To evaluate this differentiation,  observe that
\[
 \mathcal{J}(s_{1},..,s_{m-1})= t_{*} \prod_{j=1}^{m-1}{t_{j}}\prod_{j=1}^{m-1}{s_{j}}
\begin{vmatrix} 1& 1 & 1 & 1  & 1 & \cdots & 1 & 1 &  1  \\
t_{*}& t_{1} &s_{1} & t_{2}  &  s_{2} & \cdots & t_{m-1} & s_{m-1} &  1  \\
\vdots    &  \vdots & \vdots    &  \vdots & \cdots    &  \vdots   & \vdots & \vdots \\
t^{2m-1}_{*} &t^{2m-1}_{1} & s^{2m-1}_{1}  & t^{2m-1}_{2}  &   s^{2m-1}_{2} & \cdots & t^{2m-1}_{m-1} &
 s^{2m-1}_{m-1} &
1
\end{vmatrix}
\]
That is, we have
\[
 \mathcal{J}(s_{1},..,s_{m-1})= t_{*} \prod_{j=1}^{m-1}{t_{j}}\prod_{j=1}^{m-1}{s_{j}} \cdot \Delta(t_{*},t_{1},s_{1},..,t_{m-1},
s_{m-1},1) \, .
\]
We use the recursion relations
\[ \Delta(t_{*},t_{1},s_{1},..,t_{m-1},
s_{m-1},1)=(1-t_{*})\prod_{j=1}^{m-1}{(1-t_{j})(1-s_{j})} \cdot \Delta(t_{*},t_{1},s_{1},..,t_{m-1},
s_{m-1})\]
and
 \[ \Delta(t_{*},t_{1},s_{1},..,t_{m-1},
s_{m-1})=\prod_{j=1}^{m-1}{(t_{j}-t^{*})(s_{j}-t^{*})} \cdot \Delta(t_{1},s_{1},..,t_{m-1},
s_{m-1})\] to obtain
\[
 \mathcal{J}(s_{1},..,s_{m-1})= t_{*}(1-t_{*}) \prod_{j=1}^{m-1}{t_{j}(1-t_{j})s_{j}(1-s_{j})}\cdot \prod_{j=1}^{m-1}
{(t_{j}-t_{*})(s_{j}-t_{*})} \cdot \Delta(t_{1},s_{1},..,t_{m-1},
s_{m-1})\, .\]
Consequently, the identity \eqref{karlinstudden} implies
\begin{eqnarray*}
\mathcal{J}_{u}
&=& \frac{\partial^{m-1}}{\partial s_{1}\cdots \partial s_{m-1}}\mathcal{J}(s_{1},..,s_{m-1}) |_{(s_{1},..,s_{m-1}) =
(t_{1},..,t_{m-1})}\\
& =&
 t_{*}(1-t_{*}) \prod_{j=1}^{m-1}{t^{2}_{j}(1-t_{j})^{2}} \prod_{j=1}^{m-1}{(t_{j}-t_{*})^{2}} \cdot \frac{\partial^{m-1}}{\partial s_{1}\cdots \partial s_{m-1}}
\\&&\Delta(t_{*},t_{1},s_{1},..,t_{m-1},
s_{m-1})|_{(s_{1},..,s_{m-1}) =
(t_{1},..,t_{m-1})}\\
& =&
 t_{*}\bigl(1-t_{*}\bigr) \prod_{j=1}^{m-1}{(t_{j}-t_{*})^{2}}\prod_{j=1}^{m-1}{t_{j}^{2}(1-t_{j})^{2}}
\cdot \Delta_{m-1}^{4}(t)
\end{eqnarray*}
establishing the upper identity and thus completing the proof.

\subsection{Proof of Proposition \ref{prop_mom}}
For the first assertion, let $n=2m-1$.
As in the proof of Proposition \ref{prop_Jco} we find it convenient to analyze the upper and lower configurations simultaneously, by introducing
a point $t_{0} \in \{0,1\}$ and the volume filling representations $\phi_{t_{0}},t_{0} \in \{0,1\}$ where when $t_{0}=0$
we have
$\phi_{0}=\phi^{c}_{ol}$ defined in \eqref{def_phi_col} and when $t_{0}=1$ we have $\phi_{1}=\phi^{c}_{ou}$
defined in \eqref{def_phi_cou}.
In this notation,  from \eqref{phi_t0} we have
\begin{eqnarray}
 \phi^{i}_{t_{0}}(\lambda_{0},..,\lambda_{m-1};t_{1},..,t_{m-1})
&= &\sum_{j=0}^{m-1}{\lambda_{j}t_{j}^{i}}+ (1-\sum_{j=0}^{m-1}{\lambda_{j})t_{*}^{i}}\notag \\
&= &\sum_{j=0}^{m-1}{\lambda_{j}\bigl(t_{j}^{i}-t_{*}^{i}\bigr)}+ t_{*}^{i}\notag \\
&= &\lambda_{0}\bigl(t_{0}^{i}-t_{*}^{i}\bigr)+\sum_{j=1}^{m-1}{\lambda_{j}\bigl(t_{j}^{i}-t_{*}^{i}\bigr)}+ t_{*}^{i}
\end{eqnarray}
for $t_{0}=0,1$ and
 Proposition \ref{prop_Jco_2} expresses  the Jacobian determinants as
\[ |det(d\phi_{t_{0}})|(\lambda,t)= \mathcal{J}_{t_{0}}(t)
\prod_{j=1}^{m-1}{\lambda_{j}} \]
where
\begin{equation}
\label{Jac_imp0a}
\mathcal{J}_{t_{0}}(t)=
 \bigl|t_{0}-t_{*}\bigr| \prod_{j=1}^{m-1}{(t_{j}-t_{*})^{2}}\prod_{j=1}^{m-1}{(t_{j}-t_{0})^{2}}
\cdot \Delta_{m-1}^{4}(t)\, .
\end{equation}
In this notation, the modified change of variables formula \eqref{id_cov2} becomes
\begin{eqnarray}
\label{eq_odd_cov1}
\int_{M^{2m-1}}{q_{i}}
&=&\sum_{t_{0}=1,2}\int_{\Lambda^{m}\times T^{m-1}}{\phi_{t_{0}}^{i}|d\phi_{t_{0}}|}\notag\\
\end{eqnarray}
for $i \geq 0$.
Therefore, we conclude that
\begin{eqnarray}
\label{eq_odd_cov2}
\int_{M^{2m-1}}{q_{i}}
&=&\sum_{t_{0}=1,2}\int_{\Lambda^{m}\times T^{m-1}}{\phi_{t_{0}}^{i}|d\phi_{t_{0}}|}\notag\\
&=&\sum_{t_{0}=1,2}\int_{\Lambda^{m}\times T^{m-1}}{\phi_{t_{0}}^{i}\bigl(\prod_{j=1}^{m-1}{\lambda_{j}}\bigr)
\mathcal{J}_{t_{0}}}\notag\\
&=&\sum_{t_{0}=1,2}\int_{T^{m-1}}{\Bigl(\int_{\Lambda^{m}}\phi_{t_{0}}^{i}\prod_{j=1}^{m-1}{\lambda_{j}d\lambda}\Bigr)\mathcal{J}_{t_{0}}
}\notag\\
\end{eqnarray}
Performing the $\Lambda^{m}$ integration, using the identities
$\int_{\Lambda^{m}}{\lambda_{m-1}^{2}\prod_{i=1}^{m-2}{\lambda_{i}}d\lambda} = \frac{2}{(2m)!}$,
$\int_{\Lambda^{m}}{\prod_{i=1}^{m}{\lambda_{i}}d\lambda} = \frac{1}{(2m)!}$, and $\int_{\Lambda^{m}}{\prod_{i=1}^{m-1}{\lambda_{i}}d\lambda} = \frac{1}{(2m-1)!}$,
we obtain
\begin{eqnarray}
\int_{\Lambda^{m}}{\phi_{t_{0}}^{i}\prod_{j=1}^{m-1}{\lambda_{j}}}
&=& \int_{\Lambda^{m}}{\Bigl(\lambda_{0}\bigl(t_{0}^{i}-t_{*}^{i}\bigr)+\sum_{j=1}^{m-1}{\lambda_{j}\bigl(t_{j}^{i}-t_{*}^{i}\bigr)}+ t_{*}^{i}\Bigr)\prod_{j=1}^{m-1}{\lambda_{j}}d\lambda}\notag\\
&=& \frac{1}{(2m)!}\bigl(t_{0}^{i}-t_{*}^{i}\bigr)+\frac{2}{(2m)!}\sum_{j=1}^{m-1}{\bigl(t_{j}^{i}-t_{*}^{i}\bigr)}+
\frac{1}{(2m-1)!} t_{*}^{i}\notag\\
&=& \frac{1}{(2m)!}t_{0}^{i}+\frac{2}{(2m)!}\sum_{j=1}^{m-1}{t_{j}^{i}}+\frac{1}{(2m)!} t_{*}^{i}\notag\, .
\end{eqnarray}

Consequently, for $i\geq 1$, we have
\begin{eqnarray}
\label{eq_odd_cov3}
\int_{M^{2m-1}}{q_{i}}
&=&\sum_{t_{0}=1,2}\int_{T^{m-1}}{\Bigl(\int_{\Lambda^{m}}\phi_{t_{0}}^{i}\prod_{j=1}^{m-1}{\lambda_{j}}d\lambda\Bigr)\mathcal{J}_{t_{0}}
}\notag\\
&=&\sum_{t_{0}=1,2}\int_{T^{m-1}}{\Bigl(\frac{1}{(2m)!}t_{0}^{i}+\frac{2}{(2m)!}\sum_{j=1}^{m-1}{t_{j}^{i}}+
\frac{1}{(2m)!} t_{*}^{i}\Bigr)\mathcal{J}_{t_{0}}
}\notag\\
&=&\frac{1}{(2m)!}\int_{T^{m-1}}
{\mathcal{J}_{1}}+\frac{1}{(2m)!} t_{*}^{i}\int_{T^{m-1}}{\Bigl(\mathcal{J}_{0}+
\mathcal{J}_{1}\Bigr)}+\frac{2}{(2m)!}
 \int_{T^{m-1}}{\Bigl(\sum_{j=1}^{m-1}{t_{j}^{i}}\Bigr)\Bigl(\mathcal{J}_{0}+
\mathcal{J}_{1}\Bigr)
}\notag
\end{eqnarray}
and, for $i=0$
\begin{eqnarray}
\label{eq_odd_sum1}
\int_{M^{2m-1}}{q_{0}}
&=&\frac{1}{(2m-1)!}\int_{T^{m-1}}
{\Bigl(\mathcal{J}_{0}+\mathcal{J}_{1}\Bigr)}
\end{eqnarray}
which  we already knew from \eqref{vol_Jco}.
Combining the two, we  obtain for $i\geq 0$
\begin{eqnarray}
\label{eq_odd_cov4}
\int_{M^{2m-1}}{q_{i}}
&=&\frac{\d_{0}(i)}{(2m)!}\int_{T^{m-1}}
{\mathcal{J}_{0}}+\frac{1}{(2m)!}\int_{T^{m-1}}
{\mathcal{J}_{1}}
+\frac{1}{(2m)!} t_{*}^{i}\int_{T^{m-1}}{\Bigl(\mathcal{J}_{0}+
\mathcal{J}_{1}\Bigr)}\notag\\
&+&\frac{2}{(2m)!}
 \int_{T^{m-1}}{\Bigl(\sum_{j=1}^{m-1}{t_{j}^{i}}\Bigr)\Bigl(\mathcal{J}_{0}+
\mathcal{J}_{1}\Bigr)
}
\end{eqnarray}
and the substitution  of the volume equality
 \eqref{eq_odd_sum1} (that is, \eqref{vol_Jco}) yields the assertion in the odd case.

For the even case, let $n=2m$, and let us simplify notation by denoting the volume filling representations by
$\phi_{1}:=\phi^{c}_{el}$  and $\phi_{2}:=\phi^{c}_{eu}$ defined in  \eqref{def_phi_cel} and
\eqref{def_phi_ceu} so that, in this notation,
\[ \phi_{1}:\Lambda^{m}\times T^{m} \rightarrow Int(M^{2m})\]  is defined by
\begin{eqnarray*}
 \phi_{1}(\lambda_{1},..,\lambda_{m};t_{1},..,t_{m})
&=& \Bigl(\sum_{j=1}^{m}{\lambda_{j}t_{j}^{i}}+ (1-\sum_{j=1}^{m}{\lambda_{j})t_{*}^{i}}\Bigr)_{i=1}^{2m}
\end{eqnarray*}
and
 \[ \phi_{2}:\Lambda^{m+1}\times T^{m-1} \rightarrow Int(M^{2m})\]  by
\begin{eqnarray*}
 \phi_{2}(\lambda_{0},..,\lambda_{m};t_{1},..,t_{m-1})
&=& \Bigl(\sum_{j=1}^{m-1}{\lambda_{j}t_{j}^{i}}+\lambda_{m}+ (1-\sum_{j=0}^{m}{\lambda_{j})t_{*}^{i}}\Bigr)_{i=1}^{2m}\, .
\end{eqnarray*}
From
Proposition \ref{prop_Jce} we have
 \begin{eqnarray*}
 |det(d\phi_{1})(\lambda,t)|&=& \mathcal{J}_{1}(t)
\prod_{j=1}^{m}{\lambda_{j}}\\
 |det(d\phi_{2})(\lambda,t)|&=& \mathcal{J}_{2}(t)
\prod_{j=1}^{m-1}{\lambda_{j}}
\end{eqnarray*}
where
\begin{eqnarray*}
\mathcal{J}_{1}(t)
&=& \prod_{j=1}^{m}{(t_{j}-t_{*})^{2}}\cdot\Delta_{m}^{4}(t)\\
\mathcal{J}_{2}(t)
&=&
 t_{*}\bigl(1-t_{*}\bigr) \prod_{j=1}^{m-1}{(t_{j}-t_{*})^{2}}\prod_{j=1}^{m-1}{t_{j}^{2}(1-t_{j})^{2}}
\cdot\Delta_{m-1}^{4}(t)\, .
\end{eqnarray*}
 In this notation, the modified change of variable formula \eqref{id_cov2}
 becomes
\begin{equation}
\label{eq_even_sum1}
\int_{M^{2m}}{q_{i}}
=\int_{\Lambda^{m}\times T^{m}}{\phi_{1}^{i}|d\phi_{1}|}+\int_{\Lambda^{m+1}\times T^{m-1}}{\phi_{2}^{i}|d\phi_{2}|}\, .
\end{equation}
We evaluate the two integrals in  \eqref{eq_even_sum1}  by
\begin{eqnarray*}
\int_{\Lambda^{m}\times T^{m}}{\phi_{1}^{i}|d\phi_{1}|}&=&
\int_{\Lambda^{m}\times T^{m}}{\phi_{1}^{i}\bigl(\prod_{j=1}^{m}{\lambda_{j}}\bigr)\mathcal{J}_{1}}
\notag \\
&=&\int_{T^{m}}{\Bigl(\int_{\Lambda^{m}}\phi_{1}^{i}\prod_{j=1}^{m}{\lambda_{j}d\lambda}\Bigr)\mathcal{J}_{1}
}
\end{eqnarray*}
and
\begin{eqnarray*}
\int_{\Lambda^{m+1}\times T^{m-1}}{\phi_{2}^{i}|d\phi_{1}|}&=&
\int_{\Lambda^{m+1}\times T^{m-1}}{\phi_{2}^{i}\bigl(\prod_{j=1}^{m-1}{\lambda_{j}}\bigr)\mathcal{J}_{2}}
\notag \\
&=&\int_{T^{m-1}}{\Bigl(\int_{\Lambda^{m+1}}\phi_{1}^{i}\prod_{j=1}^{m-1}{\lambda_{j}d\lambda}\Bigr)\mathcal{J}_{2}
}
\end{eqnarray*}

Performing the $\Lambda^{m+1}$ and $\Lambda^{m}$  integrations, using the identities
$\int_{\Lambda^{m}}{\lambda_{1}\prod_{j=1}^{m}{\lambda_{j}}
d\lambda}
= \frac{2}{(2m+1)!}$,
$\int_{\Lambda^{m}}
{\lambda_{m-2}^{2}\prod_{i=1}^{m-3}{\lambda_{i}}d\lambda} = \frac{2}{(2m-1)!}$,
$\int_{\Lambda^{m}}
{\prod_{j=1}^{m-2}{\lambda_{i}}d\lambda} = \frac{1}{(2m-2)!}
$,
$\int_{\Lambda^{m}}{\lambda_{m-1}^{2}\prod_{i=1}^{m-2}{\lambda_{i}}d\lambda} = \frac{2}{(2m)!}$,
$\int_{\Lambda^{m}}{\prod_{i=1}^{m}{\lambda_{i}}d\lambda} = \frac{1}{(2m)!}$, and $\int_{\Lambda^{m}}{\prod_{i=1}^{m-1}{\lambda_{i}}d\lambda} = \frac{1}{(2m-1)!}$,
 we obtain for $i\geq 1$
\begin{eqnarray*}
\int_{\Lambda^{m}}\phi_{1}^{i}\prod_{j=1}^{m}{\lambda_{j}d\lambda}&=&
\int_{\Lambda^{m}}\Bigl(\sum_{j=1}^{m}{\lambda_{j}t_{j}^{i}}+ (1-\sum_{j=1}^{m}{\lambda_{j})t_{*}^{i}}\Bigr)\prod_{j=1}^{m}{\lambda_{j}d\lambda}\notag \\
&=&
\int_{\Lambda^{m}}{\Bigl(\sum_{j=1}^{m}{\lambda_{j}(t_{j}^{i}-t_{*}^{i})}+ t_{*}^{i}\Bigr)
\prod_{j=1}^{m}{\lambda_{j}}d\lambda}\notag \\
&=&
\frac{2}{(2m+1)!}\sum_{j=1}^{m}{(t_{j}^{i}-t_{*}^{i})}+\frac{1}{(2m)!} t_{*}^{i}
\notag \\
&=&
\frac{2}{(2m+1)!}\sum_{j=1}^{m}{t_{j}^{i}}+\frac{1}{(2m+1)!} t_{*}^{i}\, .
\end{eqnarray*}
and
\begin{eqnarray*}
\int_{\Lambda^{m+1}}{\phi_{2}^{i}\prod_{j=1}^{m-1}{\lambda_{j}}d\lambda}&=&
\int_{\Lambda^{m+1}}{\Bigl(\sum_{j=1}^{m-1}{\lambda_{j}t_{j}^{i}}+\lambda_{m}+ (1-\sum_{j=0}^{m}{\lambda_{j})t_{*}^{i}}\Bigr)\prod_{j=1}^{m-1}{\lambda_{j}}d\lambda}\notag \\
&=&
\int_{\Lambda^{m+1}}{\Bigl(\sum_{j=1}^{m-1}{\lambda_{j}(t_{j}^{i}-t_{*}^{i})}+\lambda_{m}(1-t_{*}^{i})+ (1-\lambda_{0})
t_{*}^{i}
\Bigr)\prod_{j=1}^{m-1}{\lambda_{j}}d\lambda}\notag \\
&=&
\frac{2}{(2m+1)!}\sum_{j=1}^{m-1}{(t_{j}^{i}-t_{*}^{i})}+ \frac{1}{(2m+1)!}(1-t_{*}^{i})+ (\frac{1}{(2m)!}-\frac{1}{(2m+1)!})
t_{*}^{i}
\notag \\
&=&
\frac{2}{(2m+1)!}\sum_{j=1}^{m-1}{t_{j}^{i}}+ \frac{1}{(2m+1)!}+ \frac{1}{(2m+1)!}
t_{*}^{i}\, .
\end{eqnarray*}

For $i=0$, \eqref{eq_even_sum1} implies
\begin{equation}
\label{vol_eveniii}
Vol(M^{2m})=\frac{1}{(2m)!}\int_{T^{m}}{
\mathcal{J}_{1}}+
\frac{1}{(2m)!}\int_{T^{m-1}}{
\mathcal{J}_{2}}
\notag
\end{equation}
so that for $i\geq 1$ we have
\begin{eqnarray*}
&&\int_{M^{2m}}{q_{i}}\notag\\
&=&\int_{\Lambda^{m}\times T^{m}}{\phi_{1}^{i}|d\phi_{1}|}+\int_{\Lambda^{m+1}\times T^{m-1}}{\phi_{2}^{i}|d\phi_{2}|}\notag \\
&=&\int_{T^{m}}{\Bigl(\frac{2}{(2m+1)!}\sum_{j=1}^{m}{t_{j}^{i}}+\frac{1}{(2m+1)!} t_{*}^{i}\Bigr)\mathcal{J}_{1}}\notag\\
&+&
\int_{T^{m-1}}{\Bigl(\frac{2}{(2m+1)!}\sum_{j=1}^{m-1}{t_{j}^{i}}+ \frac{1}{(2m+1)!}+ \frac{1}{(2m+1)!}
t_{*}^{i}
\Bigr)\mathcal{J}_{2}}\notag\\
&=&\int_{T^{m}}{\Bigl(\frac{2}{(2m+1)!}\sum_{j=1}^{m}{t_{j}^{i}}\Bigr)\mathcal{J}_{1}}\notag \\
&+&
\int_{T^{m-1}}{\Bigl(\frac{2}{(2m+1)!}\sum_{j=1}^{m-1}{t_{j}^{i}}+ \frac{1}{(2m+1)!}
\Bigr)\mathcal{J}_{2}}\notag\\
&+&\frac{t_{*}^{i}}{2m+1}Vol(M^{2m})
\end{eqnarray*}
so that for $i\geq 1$ we conclude
\begin{eqnarray*}
&&\int_{M^{2m}}{q_{i}}- \frac{t_{*}^{i}}{2m+1}Vol(M^{2m})\notag\\
&=&\frac{1}{(2m+1)!}\int_{T^{m}}{\Bigl(2\sum_{j=1}^{m}{t_{j}^{i}}\Bigr)\mathcal{J}_{1}}
+
\frac{1}{(2m+1)!}\int_{T^{m-1}}{\Bigl(2\sum_{j=1}^{m-1}{t_{j}^{i}}+1
\Bigr)\mathcal{J}_{2}}\notag\, .
\end{eqnarray*}
Combining with the result \eqref{vol_eveniii} for $i=0$
we conclude
\begin{eqnarray*}
&&\int_{M^{2m}}{q_{i}}- \frac{t_{*}^{i}}{2m+1}Vol(M^{2m})\notag\\
&=&\frac{2}{(2m+1)!}\int_{T^{m}}{\sum_{j=1}^{m}{t_{j}^{i}}\mathcal{J}_{1}}
+
\frac{2}{(2m+1)!}\int_{T^{m-1}}{\sum_{j=1}^{m-1}{t_{j}^{i}}
\mathcal{J}_{2}}+\frac{1+\d_{0}(i)}{(2m+1)!}\int_{T^{m-1}}{
\mathcal{J}_{2}}\notag
\end{eqnarray*}
establishing the assertion in the even case.

\subsection{Proof of Theorem \ref{thm_RK}}
Recall the identity  $\mathcal{J}^{c}_{ol}(0,t)\equiv 0$. Then
subtracting the volume identity \eqref{vol_Jco}
\begin{equation}
\label{vol_Jco2}
Vol(M^{2m-1}) =\frac{1}{(2m-1)!(m-1)!} \int_{I^{m-1}}{\bigl(\mathcal{J}^{c}_{ol}(t_{*},t)+
\mathcal{J}^{c}_{ou}(t_{*},t)\bigr)dt}\,
\end{equation}
from itself evaluated at $t_{*}=0$,
we conclude that
\begin{equation}
\label{vol_subtract_odd}
\int_{I^{m-1}}{\bigl(\mathcal{J}^{c}_{ol}(t_{*},t)+
\mathcal{J}^{c}_{ou}(t_{*},t)-\mathcal{J}^{c}_{ou}(0,t)\bigr)dt}\, \equiv 0\, ,
\end{equation}
that is,
\begin{equation*}
\label{vol_Jco3}
 \int_{I^{m-1}}{\mathcal{H}(t_{*},t)
dt}\, \equiv 0\, .
\end{equation*}
We now do the same subtraction for all the moments. To that end, recall the convention
$0^{0}=1$, and
 observe that, for $i\geq 0$,   the identity
\begin{eqnarray}
\label{eq_odd_cov5b}
&&\int_{M^{2m-1}}{q_{i}}-\frac{t_{*}^{i}}{2m} Vol(M^{2m-1})\notag\\
&=&\frac{\d_{0}(i)}{(2m)!(m-1)!}\int_{I^{m-1}}
{\mathcal{J}^{c}_{ol}(t_{*},t)dt}+\frac{1}{(2m)!(m-1)!}\int_{I^{m-1}}
{\mathcal{J}^{c}_{ou}(t_{*},t)dt}\notag\\
 &+&\frac{2}{(2m)!(m-1)!}\int_{I^{m-1}}{\Sigma t^{i}
\Bigl(\mathcal{J}^{c}_{ol}(t_{*},t)+\mathcal{J}^{c}_{ou}(t_{*},t)\Bigr)dt}\,
\end{eqnarray}
from Proposition \ref{prop_mom},
evaluated at $t_{*}=0$ becomes
\begin{eqnarray*}
&&\int_{M^{2m-1}}{q_{i}}-\frac{\d_{0}(i)}{2m} Vol(M^{2m-1})\notag\\
&=&\frac{\d_{0}(i)}{(2m)!(m-1)!}\int_{I^{m-1}}
{\mathcal{J}^{c}_{ol}(0,t)dt}+\frac{1}{(2m)!(m-1)!}\int_{I^{m-1}}
{\mathcal{J}^{c}_{ou}(0,t)dt}\notag\\
 &+&\frac{2}{(2m)!(m-1)!}\int_{I^{m-1}}{\Sigma t^{i}
\Bigl(\mathcal{J}^{c}_{ol}(0,t)+\mathcal{J}^{c}_{ou}(0,t)\Bigr)dt}\notag \, .
\end{eqnarray*}
Subtracting from \eqref{eq_odd_cov5b},
using the identity $\mathcal{J}^{c}_{ol}(0,t)\equiv 0$, we obtain
\begin{eqnarray*}
&&-\frac{t_{*}^{i}}{2m} Vol(M^{2m-1})
+ \frac{\d_{0}(i)}{2m} Vol(M^{2m-1})\notag\\
&=&\frac{\d_{0}(i)}{(2m)!(m-1)!}\int_{I^{m-1}}
{\mathcal{J}^{c}_{ol}(t_{*},t)dt}+\frac{1}{(2m)!(m-1)!}\int_{I^{m-1}}
{\bigl(\mathcal{J}^{c}_{ou}(t_{*},t)-\mathcal{J}^{c}_{ou}(0,t)\bigr)dt}\notag\\
 &-&\frac{2}{(2m)!(m-1)!}\int_{I^{m-1}}{\Sigma t^{i}
\mathcal{H}(t_{*},t)dt}\notag
\end{eqnarray*}
and applying the volume identity \eqref{vol_Jco2} we obtain
\begin{eqnarray*}
&&-\frac{t_{*}^{i}}{2m} Vol(M^{2m-1})\notag\\
&=&-\frac{\d_{0}(i)}{(2m)!(m-1)!}\int_{I^{m-1}}
{\mathcal{J}^{c}_{ou}(t_{*},t)dt}+\frac{1}{(2m)!(m-1)!}\int_{I^{m-1}}
{\bigl(\mathcal{J}^{c}_{ou}(t_{*},t)-\mathcal{J}^{c}_{ou}(0,t)\bigr)dt}\notag\\
 &-&\frac{2}{(2m)!(m-1)!}\int_{I^{m-1}}{\Sigma t^{i}
\mathcal{H}(t_{*},t)dt}\notag
\end{eqnarray*}
and the subtracted volume identity \eqref{vol_subtract_odd}  we obtain with a change of sign
\begin{eqnarray}
\label{eq_odd_cov5c2}
&&\frac{t_{*}^{i}}{2m} Vol(M^{2m-1})\\
&=&\frac{\d_{0}(i)}{(2m)!(m-1)!}\int_{I^{m-1}}
{\mathcal{J}^{c}_{ou}(t_{*},t)dt}+\frac{1}{(2m)!(m-1)!}\int_{I^{m-1}}
{\mathcal{J}^{c}_{ol}(t_{*},t)dt}\notag\\
 &+&\frac{2}{(2m)!(m-1)!}\int_{I^{m-1}}{\Sigma t^{i}
\mathcal{H}(t_{*},t)dt}\notag\, .
\end{eqnarray}

Then, if we let  $\phi(s):=\sum_{i=0}^{2m-1}{\phi_{i}s^{i}}$ be a polynomial of degree
$n=2m-1$, summing over each identity in \eqref{eq_odd_cov5c2}, we conclude  that
\begin{eqnarray}
\label{eq_odd_cov5d}
&&\frac{\phi(t_{*})}{2m} Vol(M^{2m-1})\notag\\
&=&\phi_{0}\frac{1}{(2m)!(m-1)!}\int_{I^{m-1}}
{\bigl(\mathcal{J}^{c}_{ol}(t_{*},t)+\mathcal{J}^{c}_{ou}(t_{*},t)\bigr)dt}
+\sum_{i=1}^{2m-1}{\phi_{i}}\frac{1}{(2m)!(m-1)!}\int_{I^{m-1}}
{\mathcal{J}^{c}_{ol}(t_{*},t)dt}\notag\\
 &+&\frac{2}{(2m)!(m-1)!}\int_{I^{m-1}}{(\Sigma \phi)(t)
\mathcal{H}(t_{*},t))dt}\notag\\
&=&\phi_{0}\frac{1}{(2m)!(m-1)!}\int_{I^{m-1}}
{\mathcal{J}^{c}_{ou}(t_{*},t)dt}
+\sum_{i=0}^{2m-1}{\phi_{i}}\frac{1}{(2m)!(m-1)!}\int_{I^{m-1}}
{\mathcal{J}^{c}_{ol}(t_{*},t)dt}\notag\\
 &+&\frac{2}{(2m)!(m-1)!}\int_{I^{m-1}}{(\Sigma \phi)(t)
\mathcal{H}(t_{*},t)dt}\notag\, .
\end{eqnarray}
Since $\phi(0)=\phi_{0}$ and $\phi(1)=\sum_{i=0}^{2m-1}{\phi_{i}}$ the assertion follows by multiplication by
$2m$.
The even case proceeds in the same way, but since it is a little different we have included it in Section \ref{sec_RKproof} in the Appendix.

\subsection{Proof of Theorem \ref{thm_selberg}}
\label{sec_selberg_proof}
For the first assertion, let $n=2m-1$ and consider the integral formula \eqref{vol_Jco2}
\[
Vol(M^{2m-1}) =\frac{1}{(2m-1)!(m-1)!} \int_{I^{m-1}}{\bigl(\mathcal{J}^{c}_{ol}(t_{*},t)+
\mathcal{J}^{c}_{ou}(t_{*},t)\bigr)dt}\,
\]
 for the volume in terms of the canonical representations.
From the definitions
\begin{eqnarray*}
\mathcal{J}^{c}_{ol}(t_{*},t)&=&
 t_{*} \prod_{j=1}^{m-1}{(t_{j}-t_{*})^{2}}\prod_{j=1}^{m-1}{t_{j}^{2}}
\cdot \Delta_{m-1}^{4}(t)
\\
\mathcal{J}^{c}_{ou}(t_{*},t)&=&
 \bigl(1-t_{*}\bigr) \prod_{j=1}^{m-1}{(t_{j}-t_{*})^{2}}
\prod_{j=1}^{m-1}{(1-t_{j})^{2}}\cdot
\Delta_{m-1}^{4}(t)\,
\end{eqnarray*}
of Proposition \ref{prop_Jco},
we obtain
\begin{eqnarray*}
\frac{\partial}{\partial t_{*}}\mathcal{J}^{c}_{ol}(t_{*},t)|_{t_{*}=0}&=&
\prod_{j=1}^{m-1}{t_{j}^{4}}\cdot \Delta_{m-1}^{4}(t)\\
\frac{\partial}{\partial t_{*}}\mathcal{J}^{c}_{ou}(t_{*},t)|_{t_{*}=0}&=&
-\bigl(1+ 2\Sigma t^{-1}\bigr)\prod_{j=1}^{m-1}{t_{j}^{2}(1-t_{j})^{2}}
\Delta_{m-1}^{4}(t)\, .
\end{eqnarray*}
Differentiating the volume formula with respect to $t_{*}$ at $t_{*}=0$, we  obtain
\begin{eqnarray*}
0&=& \frac{1}{(2m-1)!(m-1)!} \int_{I^{m-1}}{\Bigl(\frac{\partial}{\partial t_{*}}\mathcal{J}^{c}_{ol}(t_{*},t)|_{t_{*}=0}+
\frac{\partial}{\partial t_{*}}\mathcal{J}^{c}_{ou}(t_{*},t)|_{t_{*}=0}\Bigr)dt}\notag\\
\end{eqnarray*}
and therefore
\begin{eqnarray*}
 \int_{I^{m-1}}{\prod_{j=1}^{m-1}{t_{j}^{4}}\cdot \Delta_{m-1}^{4}(t)dt}&=&
 \int_{I^{m-1}}{\bigl(1+ 2\Sigma t^{-1}\bigr)\prod_{j=1}^{m-1}{t_{j}^{2}(1-t_{j})^{2}}
\Delta_{m-1}^{4}(t)
dt}\notag\\
&=&
 \int_{I^{m-1}}{\prod_{j=1}^{m-1}{t_{j}^{2}(1-t_{j})^{2}}
\Delta_{m-1}^{4}(t)
dt}\\
&+&2\int_{I^{m-1}}{\Sigma t^{-1}\cdot \prod_{j=1}^{m-1}{t_{j}^{2}(1-t_{j})^{2}}
\Delta_{m-1}^{4}(t)
dt}
\end{eqnarray*}
from which we conclude that
\[2\int_{I^{m-1}}{\Sigma t^{-1}\cdot \prod_{j=1}^{m-1}{t_{j}^{2}(1-t_{j})^{2}}
\Delta_{m-1}^{4}(t)
dt}= S_{m-1}(5,1,2)-S_{m-1}(3,3,2)\, .\]
Changing $m \mapsto m+1$ finishes the proof of the first assertion.

For the second assertion, let $n=2m$ and consider the integral formula \eqref{vol_Jce}
\[
Vol(M^{2m})=\frac{1}{(2m)!m!}\int_{I^{m}}{\mathcal{J}^{c}_{el}(t_{*},t)dt}+
\frac{1}{(2m)!(m-1)!}\int_{I^{m-1}}{\mathcal{J}^{c}_{eu}(t_{*},t)dt}\, .
\]
From the definitions
\begin{eqnarray*}
\mathcal{J}^{c}_{el}(t_{*},t)
&=& \prod_{j=1}^{m}{(t_{j}-t_{*})^{2}}\cdot \Delta_{m}^{4}(t)\\
\mathcal{J}^{c}_{eu}(t_{*},t)
&=&
 t_{*}\bigl(1-t_{*}\bigr) \prod_{j=1}^{m-1}{(t_{j}-t_{*})^{2}}\prod_{j=1}^{m-1}{t_{j}^{2}(1-t_{j})^{2}}
\cdot \Delta_{m-1}^{4}(t)\,
\end{eqnarray*}
of  Proposition \ref{prop_Jce}, we obtain
\begin{eqnarray*}
\frac{\partial}{\partial t_{*}}\mathcal{J}^{c}_{el}(t_{*},t)|_{t_{*}=0}&=&
-2\Sigma t^{-1}\prod_{j=1}^{m}{t_{j}^{2}}\cdot \Delta_{m}^{4}(t)\\
\frac{\partial}{\partial t_{*}}\mathcal{J}^{c}_{eu}(t_{*},t)|_{t_{*}=0}&=&
\prod_{j=1}^{m-1}{t_{j}^{4}(1-t_{j})^{2}}
\cdot \Delta_{m-1}^{4}(t)\, .
\end{eqnarray*}
Differentiating the volume formula with respect to $t_{*}$ at $t_{*}=0$, we obtain
\begin{eqnarray*}
0&=&\frac{1}{(2m)!m!}\int_{I^{m}}{\frac{\partial}{\partial t_{*}}\mathcal{J}^{c}_{el}(t_{*},t)|_{t_{*}=0}dt}+
\frac{1}{(2m)!(m-1)!}\int_{I^{m-1}}{\frac{\partial}{\partial t_{*}}\mathcal{J}^{c}_{eu}(t_{*},t)|_{t_{*}=0}dt}
\end{eqnarray*}
and therefore we conclude
\begin{eqnarray*}
2 \int_{I^{m}}{\Sigma t^{-1} \cdot \prod_{j=1}^{m}{t_{j}^{2}}\cdot \Delta_{m}^{4}(t)dt}&=&
 m\int_{I^{m-1}}{\prod_{j=1}^{m-1}{t_{j}^{4}(1-t_{j})^{2}}
\cdot \Delta_{m-1}^{4}(t)
dt}\\
&=& mS_{m-1}(5,3,2)\, ,
\end{eqnarray*}
finishing the proof of the second assertion.

\subsection{Proof of Theorem \ref{thm_selberg2}}
First note that the definition \eqref{def_H}
\[\mathcal{H}(t_{*},t):=\mathcal{J}^{c}_{ou}(0,t)-\mathcal{J}^{c}_{ol}(t_{*},t)-\mathcal{J}^{c}_{ou}(t_{*},t)\]
and the definitions of
$\mathcal{J}^{c}_{ol}$ and $\mathcal{J}^{c}_{ou}$
from Proposition \ref{prop_Jco}  imply that
$\hat{\mathcal{H}}(\cdot ,t) \in \Pi^{2m-1},\, t \in I^{m-1}.$
 Therefore,  it follows from  \eqref{H_dirichlet} that
$\hat{\mathcal{H}}(\cdot ,t) \in \Pi^{2m-1}_{0}, \, t \in I^{m-1}.$
Now, it follows from Theorem \ref{thm_RK}  that
\begin{eqnarray*}
\phi(t_{*})
&=&\int_{I^{m-1}}{(\Sigma \phi)(t)\hat{\mathcal{H}}(t_{*},t)dt}, \quad \phi \in \Pi^{2m-1}_{0}
\end{eqnarray*}
which expanded becomes
\begin{eqnarray*}
\phi(t_{*})
&=&\sum_{j=1}^{2m-2}{p_{j}(t_{*})\int_{I^{m-1}}{(\Sigma \phi)(t)h_{j}(t)dt}}, \quad \phi \in \Pi^{2m-1}_{0}\, ,
\end{eqnarray*}
in particular, by choosing $\phi:=p_{k}, k=1,..,2m-2$,
\begin{eqnarray*}
p_{k}(t_{*})&=&\sum_{j=1}^{2m-2}{p_{j}(t_{*})\int_{I^{m-1}}{(\Sigma p_{k})(t)h_{j}(t)dt}}, \quad  k=1,..,2m-2\, ,
\end{eqnarray*}
from which we conclude
\begin{equation*}
\label{selbergfamily}
\int_{I^{m-1}}{\Sigma p_{j}\cdot h_{k}} =\delta_{jk}, \quad  j,k=1,..,2m-2\, \,
\end{equation*}
establishing the assertion. Furthermore, from this and the symmetry of $h_{k},,k=1,..,2m-2$ with respect to the
action of the symmetric group, we also conclude
\begin{equation*}
\label{selbergfamilymarg}
(m-1)\int_{I}{ p_{j}\cdot \bar{h}_{k}} =\delta_{jk}, \quad  j,k=1,..,2m-2\, \,
\end{equation*}
establishing Corollary \ref{cor_selberg2}.

\subsection{Proof of Theorem \ref{thm_selberg_explicit}}
From the orthogonality relation (see e.g.~\cite[Eq.~12.110]{Arfken})
\[\int_{I}{Q_{j}Q_{k}}=\frac{1}{2k+1}\frac{(k+2)!}{(k-2)!} \d_{jk}\]
and the definition
\[\hat{\mathcal{H}}:=\frac{1}{Vol(M^{2m-1})}\frac{2}{(2m-1)!(m-1)!}\mathcal{H}\, \]
of the scaling of the kernel
 \eqref{def_H}
\begin{equation}
\label{def_H2}
\mathcal{H}(t_{*},t)=\mathcal{J}^{p}_{ou}(t)-\mathcal{J}^{c}_{ol}(t_{*},t)-\mathcal{J}^{c}_{ou}(t_{*},t)\, ,
\end{equation}
we can compute
the coefficients $h_{k}$ in the expansion
\[\hat{\mathcal{H}}(t_{*} ,t)=\sum_{j=2}^{2m-1}{h_{j}(t)Q_{j}(t_{*})},\quad  (t_{*},t) \in I \times I^{m-1}\, \]
as
\begin{equation}
\label{def_h}
h_{k}(t)= \frac{1}{Vol(M^{2m-1})}\frac{2}{(2m-1)!(m-1)!}\frac{(2k+1)(k-2)!}{(k+2)!}\int_{I}{\mathcal{H}(\cdot,t)Q_{k}(\cdot)},\quad  k=2,..,2m-1\,
\end{equation}
and then apply
Theorem \ref{thm_selberg2} to obtain the assertion.
To that end, for $k=2,..,2m-1$,
to compute
\[\int_{I}{\mathcal{H}(\cdot,t)Q_{k}(\cdot)}\]
we
use the decomposition of \eqref{def_H2} of $\mathcal{H}$ and compute the values
$\int_{I}{\mathcal{J}^{p}_{ou}(t)Q_{k}(\cdot)},
\int_{I}{\mathcal{J}^{c}_{ol}(\cdot,t)Q_{k}(\cdot)}, $ and
$\int_{I}{\mathcal{J}^{c}_{ou}(\cdot,t)Q_{k}(\cdot)} $ separately.
For the first term,  observe that
$
\int_{I}{Q_{k}}=1+(-1)^{k},\, k=2,...,2m-1$ from Proposition \ref{legendre_integral},
so that
\begin{equation}
\label{e111}
\int_{I}{\mathcal{J}^{p}_{ou}(t)Q_{k}(r)dr}=\mathcal{J}^{p}_{ou}(t)\int_{I}{Q_{k}(r)dr}=(1+(-1)^{k})\mathcal{J}^{p}_{ou}(t)\, .
\end{equation}
For the second,  $\int_{I}{\mathcal{J}^{c}_{ol}(\cdot,t)Q_{k}(\cdot)}$,
we expand $\mathcal{J}^{c}_{ol}(\cdot,t)$, defined in \eqref{prop_Jco}, as a polynomial for fixed $t$
and then utilize the values of the integrals
$\int_{I}{r^{j}Q_{k}(r)dr}$
for the monomials $r^{j}, j=1,..,2m-1$.
To that end, define
\begin{equation}
\label{def_tildeJ}
 \tilde{\mathcal{J}}^{c}_{ol}(\cdot,t):=t{*}\prod_{j=1}^{m-1}{(t_{j}-t_{*})^{2}}
\end{equation}
so that
\[\mathcal{J}^{c}_{ol}(\cdot,t)=\tilde{\mathcal{J}}^{c}_{ol}(\cdot,t)\prod_{j=1}^{m-1}{t_{j}^{2}}\cdot
        \Delta_{m-1}^{4}(t)\, .\]
Then from the definition of $\mathcal{J}^{p}_{el}$ of  \eqref{prop_Jpe}
we have $\mathcal{J}^{p}_{el}(t)=\prod_{j=1}^{m-1}{t_{j}^{2}}\cdot
        \Delta_{m-1}^{4}(t)$ so that
\begin{equation}
\label{Jdecomp}
\mathcal{J}^{c}_{ol}(\cdot,t)=\tilde{\mathcal{J}}^{c}_{ol}(\cdot,t)
        \mathcal{J}^{p}_{el}(t)\, .
\end{equation}

        The generating function identity for the elementary symmetric functions $e_{j}$ is
        \[ \prod_{i=1}^{m-1}(1+s t_{i})=\sum_{j=0}^{m-1}{e_{j}(t)s^{j}}\]
        and squaring it we obtain
        \begin{eqnarray}
        \prod_{i=1}^{m-1}{(1+s t_{i})^{2}}& =&\Bigl(\sum_{j=0}^{m-1}{e_{j}(t)s^{j}}\Bigr)^{2}\notag \\
                &=& \sum_{j=0}^{2m-2}{e_{j}(t,t)s^{j}}\, .
                \end{eqnarray}
                Therefore, by changing $s \mapsto -t_{*}^{-1}$ we conclude that
                \begin{eqnarray}
                \prod_{j=1}^{m-1}{(t_{j}-t_{*})^{2}}& =&
                \sum_{j=0}^{2m-2}{e_{2m-2-j}(t,t)(-1)^{j}t_{*}^{j}}
                \end{eqnarray}
                and therefore
                \[t{*}\prod_{j=1}^{m-1}{(t_{j}-t_{*})^{2}}=
                                \sum_{j=0}^{2m-2}{e_{2m-2-j}(t,t)(-1)^{j}t_{*}^{j+1}}
        \]
and, relabeling, we conclude
\begin{equation}
\label{J_exp}
\tilde{\mathcal{J}}^{c}_{ol}(t_{*},t)= - \sum_{j=0}^{2m-1}{d_{j}(t)(-1)^{j}t_{*}^{j}}
 \end{equation}
where
\begin{equation}
\label{def_d}
d_{j}(t):=e_{2m-1-j}(t,t)=\sum_{j_{1}+j_{2}=2m-1-j}{e_{j_{1}}(t)e_{j_{2}}(t)},\quad  j=1,..,2m-1\,
\end{equation}
and $d_{0}(t):=0$. Note that $d_{j}$ is a symmetric polynomial of degree $2m-1-j$.

In particular,
 from the definition \eqref{def_tildeJ} and its resulting polynomial expansion  \eqref{J_exp}  we have
\begin{equation}
\label{J1}
\tilde{\mathcal{J}}^{c}_{ol}(1,t)=-\sum_{j=0}^{2m-1}{d_{j}(t)(-1)^{j}}= \prod_{j=1}^{m-1}{(1-t_{j})^{2}}\, .
\end{equation}
The following proposition  computes the values of the integrals of the Legendre polynomials
 against the monomials,
and we observe  that $\int_{I}{r^{j}Q_{k}(r)dr}=1$ for $1\leq  j <k$
and $\int_{I}{r^{j}Q_{k}(r)dr}=1$ plus a term when $j \geq k$.
\begin{prop}
\label{legendre_integral}
For  $k=2,..,2m-1$ we have
\[\int_{I}{Q_{k}(r)dr}=1+(-1)^{k}\]
\[\int_{I}{r^{j}Q_{k}(r)dr}=1,\quad 1\leq  j <k\]
\[\int_{I}{r^{j}Q_{k}(r)dr}=1-\frac{(j+k+k^{2})\Gamma(j+2)\Gamma(j)}{\Gamma(j+k+2)\Gamma(j-k+1)},
\quad   j\geq k\, .\]
\end{prop}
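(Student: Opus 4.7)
\subsection*{Proof proposal for Proposition \ref{legendre_integral}}

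The plan is to evaluate all three integrals uniformly by a double integration by parts, reducing each integral $\int_{I} r^{j}Q_{k}(r)dr$ to a boundary term plus two integrals of monomials against the shifted Legendre polynomials $P_{k}$, which can be handled by orthogonality and the Rodrigues formula.

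First I will record three auxiliary facts about the shifted Legendre polynomials $P_{k}$ defined in \eqref{def_Legendre}: (i) $P_{k}(1)=1$ and $P_{k}(0)=(-1)^{k}$; (ii) orthogonality, so that $\int_{0}^{1}r^{j}P_{k}(r)dr=0$ for $0\leq j<k$ and in particular $\int_{0}^{1}P_{k}(r)dr=0$ for $k\geq 1$; and (iii) the explicit value
\[
\int_{0}^{1}r^{j}P_{k}(r)dr=\frac{(j!)^{2}}{(j-k)!(j+k+1)!},\qquad j\geq k,
\]
which follows by integrating the Rodrigues representation \eqref{def_Legendre} by parts $k$ times (all boundary terms vanish because $(r^{2}-r)^{k}$ has a zero of order $k$ at both endpoints), differentiating the monomial, and recognizing the remaining integral as a Beta integral $B(j+1,k+1)=\frac{j!k!}{(j+k+1)!}$.

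Next, setting $f(r):=r^{j+1}-r^{j+2}$ so that $r^{j}Q_{k}(r)=f(r)P_{k}''(r)$, I will integrate by parts twice. The first application kills the boundary term since $f(0)=f(1)=0$; the second produces the boundary term $-[f'(r)P_{k}(r)]_{0}^{1}$, which evaluates to $P_{k}(1)=1$ because $f'(1)=-1$ and $f'(0)=0$ (for $j\geq 1$), together with the integral of $f''(r)=j(j+1)r^{j-1}-(j+1)(j+2)r^{j}$ against $P_{k}$. The result is
\[
\int_{0}^{1}r^{j}Q_{k}(r)dr=1+j(j+1)\int_{0}^{1}r^{j-1}P_{k}(r)dr-(j+1)(j+2)\int_{0}^{1}r^{j}P_{k}(r)dr.
\]
The case $j=0$ is handled separately by the same IBP applied to $f(r)=r-r^{2}$; here $f'(0)=1$ and $f'(1)=-1$ give the boundary contribution $1+(-1)^{k}$ while the residual integral $-2\int_{0}^{1}P_{k}$ vanishes by orthogonality for $k\geq 2$, yielding the first formula. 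For $1\leq j<k$ both residual integrals vanish by orthogonality, immediately giving the middle formula $\int_{I}r^{j}Q_{k}=1$.

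For the remaining case $j\geq k$, I will substitute the closed form from fact (iii) into both residual integrals (when $j=k$ the term with $r^{j-1}$ vanishes by orthogonality, consistent with the formula) and simplify. Factoring out $\frac{(j+1)!(j-1)!}{(j-k)!(j+k+1)!}$ reduces the bracket to
\[
(j-k)(j+k+1)-j(j+2),
\]
and the main obstacle---really the only non-routine step---is recognizing that this difference collapses to $-(j+k+k^{2})$, which I will verify by direct expansion. Rewriting the factorials as $\Gamma$-values then produces exactly
\[
\int_{0}^{1}r^{j}Q_{k}(r)dr=1-\frac{(j+k+k^{2})\,\Gamma(j+2)\Gamma(j)}{\Gamma(j+k+2)\Gamma(j-k+1)},
\]
completing the proof of the third formula and hence of Proposition \ref{legendre_integral}.
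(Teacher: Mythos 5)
Your proof is correct and follows essentially the same strategy as the paper: a double integration by parts reducing $\int_I r^j Q_k$ to boundary terms plus integrals of monomials against $P_k$, which vanish for $j<k$ by orthogonality and otherwise evaluate to a ratio of factorials. The only (cosmetic) difference is that you work directly on $I$ and derive the needed value of $\int_0^1 r^j P_k(r)\,dr$ from the Rodrigues formula and a Beta integral, whereas the paper works on $[-1,1]$, quotes the Gradshteyn--Ryzhik formula for $\int_{-1}^1(1+x)^j P_k(x)\,dx$, and changes variables to $I$ at the end; the two are related by $x=2r-1$ and lead to the identical algebraic simplification $(j-k)(j+k+1)-j(j+2)=-(j+k+k^2)$.
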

 Therefore,
 since $d_{0}=0$, we obtain from the polynomial expansion  \eqref{J_exp},
 Proposition \ref{legendre_integral}, and \eqref{J1}  that
\begin{eqnarray}
\label{JQol}
 \int_{I}{ \tilde{\mathcal{J}}^{c}_{ol}(r,t)Q_{k}(r)dr}&=&
\sum_{j=k}^{2m-1}{(-1)^{j}a_{jk}d_{j}(t)}
-\sum_{j=0}^{2m-1}{(-1)^{j}d_{j}(t)}\notag\\
&=&
\sum_{j=k}^{2m-1}{(-1)^{j}a_{jk}d_{j}(t)}
+\tilde{\mathcal{J}}^{c}_{ol}(1,t)
\end{eqnarray}
where we recall the definition
\[a_{jk}:=\frac{(j+k+k^{2})\Gamma(j+2)\Gamma(j)}{\Gamma(j+k+2)\Gamma(j-k+1)}\, ,\]
and note
 that the lower limit in the summation is $k$ and consequently, the first term is a  symmetric
polynomial of degree $2m-1 -k$.
Therefore, multiplying by  $\mathcal{J}^{p}_{el}(t)=\prod_{j=1}^{m-1}{t_{j}^{2}}\cdot
        \Delta_{m-1}^{4}(t)$ and using the definition
\eqref{Jdecomp} and the identity
\begin{eqnarray}
 \tilde{\mathcal{J}}^{c}_{ol}(1,t) \mathcal{J}^{p}_{el}(t)&=&
\prod_{j=1}^{m-1}{(1-t_{j})^{2}}\prod_{j=1}^{m-1}{t_{j}^{2}}\cdot
        \Delta_{m-1}^{4}(t)\notag\\
&=&\mathcal{J}^{p}_{ou}(t)
\end{eqnarray}
from the definition
$\mathcal{J}^{p}_{ou}(t)=\prod_{j=1}^{m-1}{t_{j}^{2}(1-t_{j})^{2}}\cdot
        \Delta_{m-1}^{4}(t)$ from Proposition \eqref{prop_Jpo},
we conclude that
\begin{eqnarray}
\label{JQolfinal}
 \int_{I}{ \mathcal{J}^{c}_{ol}(r,t)Q_{k}(r)dr}
&=&
\mathcal{J}^{p}_{el}(t)\sum_{j=k}^{2m-1}{(-1)^{j}a_{jk}d_{j}(t)}
+\mathcal{J}^{p}_{ou}(t)\notag\, .
\end{eqnarray}
Let us designate the negative of the first  term
\begin{equation}
\label{def_ah}
\acute{h}_{k}(t):=\mathcal{J}^{p}_{el}(t)\sum_{j=k}^{2m-1}{(-1)^{j+1}a_{jk}d_{j}(t)}
\end{equation}
so that
\begin{equation}
\label{e222}
\int_{I}{ \mathcal{J}^{c}_{ol}(r,t)Q_{k}(r)dr}=-\acute{h}_{k}(t)+\mathcal{J}^{p}_{ou}(t) .
\end{equation}

For the third term $\int_{I}{\mathcal{J}^{c}_{ou}(\cdot,t)Q_{j}(\cdot)} $, we utilize the reflection symmetry
\[\mathcal{J}^{c}_{ou}(t_{*},t)=\mathcal{J}^{c}_{ol}(1-t_{*},1-t)\] of
\eqref{Jcsymm} and the reflection symmetry
(see e.g.~\cite[Eq.~12.97]{Arfken})
\[Q_{k}(1-r)=(-1)^{k}Q_{k}(r)\]
of the associate Legendre polynomials to compute the integral $\int_{I}{\mathcal{J}^{c}_{ou}(\cdot,t)Q_{j}(\cdot)} $ in terms of $\int_{I}{\mathcal{J}^{c}_{ol}(\cdot,t)Q_{j}(\cdot)} $.
That is,
\begin{eqnarray*}
 \int_{I}{ \mathcal{J}^{c}_{ou}(r,t)Q_{k}(r)dr}&=&
 \int_{I}{ \mathcal{J}^{c}_{ou}(1-r,t)Q_{k}(1-r)dr}\\
&=& \int_{I}{ \mathcal{J}^{c}_{ol}(r,1-t)Q_{k}(1-r)dr}\\
&=& (-1)^{k}\int_{I}{ \mathcal{J}^{c}_{ol}(r,1-t)Q_{k}(r)dr}\\
\end{eqnarray*}
and therefore
\begin{eqnarray*}
 \int_{I}{ \mathcal{J}^{c}_{ou}(r,t)Q_{k}(r)dr}
&=& (-1)^{k}\int_{I}{ \mathcal{J}^{c}_{ol}(r,1-t)Q_{k}(r)dr}\\
&=& -(-1)^{k}\acute{h}_{k}(1-t) +(-1)^{k}\mathcal{J}^{p}_{ou}(1-t)\, ,
\end{eqnarray*}
so that we conclude
\begin{equation}
\label{e333}
 \int_{I}{ \mathcal{J}^{c}_{ou}(r,t)Q_{k}(r)dr}
= -(-1)^{k}\acute{h}_{k}(1-t) +(-1)^{k}\mathcal{J}^{p}_{ou}(1-t)\, .
\end{equation}
Putting all three terms together using \eqref{def_H2}  and the identities
\eqref{e111}, \eqref{e222}, and \eqref{e333}, along with the symmetry
\[\mathcal{J}^{p}_{ou}(1-t)=\mathcal{J}^{p}_{ou}(t)\, ,\]
 we conclude that
\begin{eqnarray*}
\label{eqieiei}
 \int_{I}{ \mathcal{H}(r,t)Q_{k}(r)dr}
&=&  \int_{I}{\mathcal{J}^{p}_{ou}(t)Q_{k}(r)dr}- \int_{I}{\mathcal{J}^{c}_{ol}(r,t)Q_{k}(r)dr}-
 \int_{I}{\mathcal{J}^{c}_{ou}(r,t)Q_{k}(r)dr}\notag\\
&=&(1+(-1)^{k})\mathcal{J}^{p}_{ou}(t)+ \acute{h}_{k}(t)-\mathcal{J}^{p}_{ou}(t)+
 (-1)^{k}\acute{h}_{k}(1-t) -(-1)^{k}\mathcal{J}^{p}_{ou}(1-t)\\
&=&\acute{h}_{k}(t)+ (-1)^{k}\acute{h}_{k}(1-t)\, .
\end{eqnarray*}

To finish, consider the functions
$\breve{h}_{k}(t):=\acute{h}_{k}(t)+ (-1)^{k}\acute{h}_{k}(1-t)$.
It follows from \eqref{def_h} that the basis coefficients $h_{k}$  satisfy
\[h_{k}=\frac{1}{Vol(M^{2m-1})}\frac{2}{(2m-1)!(m-1)!}\frac{(2k+1)(k-2)!}{(k+2)!}\breve{h}_{k},\quad
k=2,..,2m-1\, . \]
Moreover,
Theorem \ref{thm_selberg2} implies that $\{\Sigma Q_{j},j=2,..,2m-1\}$ and $\{h_{k}, k=2,..,2m-1\}$
are  an $L^{2}(I^{m-1})$ biorthogonal system.  It therefore follows that
\[\int_{I^{m-1}}{\breve{h}_{k}\Sigma Q_{j}}=Vol(M^{2m-1})(2m-1)!(m-1)!\frac{(k+2)!}{(4k+2)(k-2)!}\d_{jk},\quad j,k=2,..,2m-1\, .\]
 Moreover, from the symmetry
$Q_{k}(1-r)=(-1)^{k}Q_{k}(r)$ and a change of variables we obtain
\begin{eqnarray*}
 \int_{I^{m-1}}{\breve{h}_{k}(t) \Sigma Q_{j}(t)dt}&=&
 \int_{I^{m-1}}{\acute{h}_{k}(t) \Sigma Q_{j}(t)dt}+(-1)^{k} \int_{I^{m-1}}{\acute{h}_{k}(1-t) \Sigma Q_{j}(t)dt}
\\
&=&
 \int_{I^{m-1}}{\acute{h}_{k}(t) \Sigma Q_{j}(t)dt}+(-1)^{k} \int_{I^{m-1}}{\acute{h}_{k}(t) \Sigma Q_{j}(1-t)dt}
\\
&=& \int_{I^{m-1}}{\acute{h}_{k}(t) \Sigma Q_{j}(t)dt}+(-1)^{j+k} \int_{I^{m-1}}{\acute{h}_{k}(t) \Sigma Q_{j}(t)dt}\\
&=&(1+(-1)^{j+k}) \int_{I^{m-1}}{\acute{h}_{k}(t) \Sigma Q_{j}(t)dt}\,.
\end{eqnarray*}
Since $(j+k)\bmod 2 =(j-k)\bmod 2$,
the assertion then follows from the definition \eqref{def_ah} of $\acute{h}_{k}$,  the identity
$\mathcal{J}^{p}_{el}(t)=\prod_{j=1}^{m-1}{t_{j}^{2}}\cdot
        \Delta_{m-1}^{4}(t)$, and the definition \eqref{def_d} of $d_{k}$.
Moreover, we see that for $(j-k)\bmod 2=1$ the vanishing of this integral does not depend on the function
$\acute{h}_{k}$ but is instead a consequence only of the relative parity between $Q_{j}$ and
$\acute{h}_{k}$ with respect to the operation of reflection.

\subsection{Proof of Proposition \ref{legendre_integral}}
We abuse notation by letting $P_{k}$ and $Q_{k}$ denote the Legendre polynomials on the standard
set $[-1,1]$. At the end we will change  back to the interval $I$.
We use \cite[Eq.~7.127, pg.~771]{GradshteynRyzhik}
\[\int_{-1}^{1}{(1+x)^{j}P_{k}(x)dx}=\frac{2^{j+1}\Gamma^{2}(j+1)}{\Gamma(j+k+2)\Gamma(j-k+1)}, \quad j\geq 0\]
and the definition
$Q_{k}(x):=(1-x^{2})P''_{k}(x)$ and integration by parts. Because of the poles of the Gamma function at $0$ and the negative integers, we conclude that
\[\int_{-1}^{1}{(1+x)^{j}P_{k}(x)dx}=0,\quad j<k\, .\]

Consider the function $\phi(x):= (1+x)^{j}(1-x^{2})=2(1+x)^{j+1}-(1+x)^{j+2}$,
which has the derivatives $\phi'(x)=2(j+1)(1+x)^{j}-(j+2)(1+x)^{j+1}$ and
$\phi''(x)=2(j+1)j(1+x)^{j-1}-(j+2)(j+1)(1+x)^{j}$. Since, for $j\geq 1$, we have
$\phi'(-1)=0$ and $\phi'(1)=-2^{j+1}$, and $P_{k}(1)=1$, we obtain
\begin{eqnarray*}
\int_{-1}^{1}{(1+x)^{j}Q_{k}(x)dx}&=&\int_{-1}^{1}{(1+x)^{j}(1-x^{2})P''_{k}(x)dx}\\
&=&\int_{-1}^{1}{\phi(x)P''_{k}(x)dx}\\
&=&\phi P'_{k}\Big|^{1}_{-1}-\int_{-1}^{1}{\phi'(x)P'_{k}(x)dx}\\
&=&-\int_{-1}^{1}{\phi'(x)P'_{k}(x)dx}\\
&=& -\phi'P_{k}\Big|^{1}_{-1} +\int_{-1}^{1}{\phi''(x)P_{k}(x)dx}\\
&=& 2^{j+1} +\int_{-1}^{1}{\phi''(x)P_{k}(x)dx}\\
&=& 2^{j+1} +2(j+1)j\int_{-1}^{1}{(1+x)^{j-1}P_{k}(x)dx}\\&&-(j+2)(j+1)\int_{-1}^{1}{(1+x)^{j}P_{k}(x)dx}\\
\end{eqnarray*}
from which we conclude that
\[\int_{-1}^{1}{(1+x)^{j}Q_{k}(x)dx}= 2^{j+1},\quad 1 \leq j < k\, .\]
For the case $j=0$, defining $\phi(x):=1-x^{2}$, we instead have  $\phi'(-1)=2$, $\phi'(1)=-2$, $P_{k}(1)=1$, and
$P_{k}(-1)=(-1)^{k}$. Using $\int_{-1}^{1}{P_{k}}=0, k\geq 1$,
  we obtain
\begin{eqnarray*}
\int_{-1}^{1}{Q_{k}(x)dx}&=&\int_{-1}^{1}{(1-x^{2})P''_{k}(x)dx}\\
&=&\int_{-1}^{1}{\phi(x)P''_{k}(x)dx}\\
&=&-\int_{-1}^{1}{\phi'(x)P'_{k}(x)dx}\\
&=& -\phi'P_{k}\Big|^{1}_{-1} +\int_{-1}^{1}{\phi''(x)P_{k}(x)dx}\\
&=& -\phi'P_{k}\Big|^{1}_{-1}\\
&=& -\Bigl(-2 P_{k}(1) -2P_{k}(-1)\Bigr)\\
&=& 2(1+(-1)^{k}) \, .
\end{eqnarray*}
On the other hand,  for $j \geq  k$ we have
\begin{eqnarray*}
&&\int_{-1}^{1}{(1+x)^{j}Q_{k}(x)dx}-2^{j+1}\\
 &=&2(j+1)j\int_{-1}^{1}{(1+x)^{j-1}P_{k}(x)dx}-(j+2)(j+1)\int_{-1}^{1}{(1+x)^{j}P_{k}(x)dx}\\
&=&2^{j+1}(j+1)j\frac{\Gamma^{2}(j)}{\Gamma(j+k+1)\Gamma(j-k)}-2^{j+1}(j+2)(j+1)\frac{\Gamma^{2}(j+1)}{\Gamma(j+k+2)\Gamma(j-k+1)}\Bigr)\\
&=& 2^{j+1}j(j+1)\Gamma^{2}(j)\Bigl(\frac{1}{\Gamma(j+k+1)\Gamma(j-k)}-j(j+2)
\frac{1}{\Gamma(j+k+2)\Gamma(j-k+1)}\Bigr)\\
&=& 2^{j+1}\frac{j(j+1)\Gamma^{2}(j)}{\Gamma(j+k+2)\Gamma(j-k+1)}\Bigl((j+k+1)(j-k)-j(j+2)\Bigr)\\
&=& -2^{j+1}\frac{j(j+1)(j+k+k^{2})\Gamma^{2}(j)}{\Gamma(j+k+2)\Gamma(j-k+1)}\\
&=& -2^{j+1}\frac{(j+k+k^{2})\Gamma(j+2)\Gamma(j)}{\Gamma(j+k+2)\Gamma(j-k+1)}
\end{eqnarray*}
and therefore
\begin{eqnarray*}
\int_{-1}^{1}{(1+x)^{j}Q_{k}(x)dx}=
2^{j+1}\Bigl(1-\frac{(j+k+k^{2})\Gamma(j+2)\Gamma(j)}{\Gamma(j+k+2)\Gamma(j-k+1)}\Bigr)\, .
\end{eqnarray*}
Translating to the unit interval with the map $I \mapsto [-1,1]$ defined by
$x=2r-1$ we obtain the assertion.

\section{Appendix}

\subsection{Proof of even case of  Lemma \ref{lem_mass_inf}}

We utilize the bijective principal representation $\phi^{p}_{el}: \Lambda^{m} \times T^{m} \rightarrow Int(M^{2m})$ defined in \eqref{def_phi_pel} and
\[|det(d\phi^{p}_{eu})|(\lambda,t)= \mathcal{J}^{p}_{eu}(t)
\prod_{j=1}^{m}{\lambda_{j}} \]
where
\[
\mathcal{J}^{p}_{el}(t)
:= \prod_{j=1}^{m}{t_{j}^{2}}
\cdot \Delta_{m}^{4}(t)
\]
from Proposition \ref{prop_Jpe}.

Fix $t_{*}\in (0,1)$ and let
\[ T_{\d}^{m}:= \bigl\{ (t_{1},..,t_{m})\in   T^{m}:
t_{j} \notin B_{\d}(t_{*}) , j=1,..,m\, \bigr\} . \]
It follows that
\[M^{2m}_{\d} \supset \phi^{p}_{el}\bigl(\Lambda^{m}\times T_{\d}^{m}\bigr)\]
and therefore
\begin{equation}
\label{e22222}
Vol\Bigl(M^{2m}_{\d}\Bigr)  \geq  Vol\Bigl(\phi^{p}_{el}\bigl(\Lambda^{m}\times T_{\d}^{m} \bigr)\Bigr)\, .
\end{equation}

Using the identity
$\int_{\Lambda^{m}}{\prod_{i=1}^{m}{\lambda_{i}}d\lambda} = \frac{1}{(2m)!}$,
we compute the righthand side using the change of variables formula as
\begin{eqnarray*}
 Vol\Bigl(\phi^{p}_{el}\bigl(\Lambda^{m}\times T_{\d}^{m} \bigr)\Bigr)
&=&
\int_{\Lambda^{m} \times  T_{\d}^{m}}{|det(d\phi^{p}_{el})|
 }\\
&=&\int_{\Lambda^{m}}{\bigl(\prod_{j=1}^{m}{\lambda_{j}}\bigr)
d\lambda} \int_{T_{\d}^{m}}{\mathcal{J}^{p}_{el}}\\
&=&\frac{1}{(2m)!}
\int_{T_{\d}^{m}}{\mathcal{J}^{p}_{el}}\\
&=&\frac{1}{(2m)!m!}
\int_{I_{\d}^{m}}{\mathcal{J}^{p}_{el}}
\end{eqnarray*}
where
\[ I_{\d}^{m}:= \bigl\{ (t_{1},..,t_{m})\in   I^{m}:
t_{j} \notin B_{\d} , j=1,..,m\, \bigr\} . \]
To bound this from below we bound the integral over
$ \bigl(I_{\d}^{m}\bigr)^{c}$ from above.
To that end, let
\[ I_{\d,j}^{m}:= \bigl\{ (t_{1},..,t_{m})\in   I^{m}:
t_{j} \in B_{\d}\bigr \},\quad  j=1,..,m\,, \]
so that
\[ \bigl(I_{\d}^{m}\bigr)^{c}=  \cup_{j} I_{\d,j}^{m}\, .\]
Therefore, using a union bound, we have
\begin{eqnarray*}
 \int_{\bigl(I_{\d}^{m}\bigr)^{c}}{\mathcal{J}^{p}_{el}}&=&
 \int_{\bigl(I_{\d}^{m}\bigr)^{c}}{ \prod_{j=1}^{m}{t_{j}^{2}}\cdot \Delta_{m}^{4}(t)dt}\\
 &=&\int_{\cup_{j'=1}^{m}{I^{m}_{\d,j'}}}{ \prod_{j=1}^{m}{t_{j}^{2}}\cdot \Delta_{m}^{4}(t)dt}\\
&\leq &\sum_{j'=1}^{m}{\int_{I^{m}_{\d,j'}}{ \prod_{j=1}^{m}{t_{j}^{2}}\cdot \Delta_{m}^{4}(t)dt}}\\
& =&m\int_{I^{m}_{\d,1}}{ \prod_{j=1}^{m}{t_{j}^{2}}\cdot \Delta_{m}^{4}(t)dt}\\
& = &m\int_{I^{m}_{\d,1}}{ \prod_{j=1}^{m}{t_{j}^{2}}\prod_{1 \leq j<k \leq m}{(t_{k}-t_{j})^{4}}dt_{1}\cdots dt_{m}}\\
& \leq &m\int_{I^{m}_{\d,1}}{ \prod_{j=2}^{m}{t_{j}^{2}}\prod_{2 \leq j<k \leq m}{(t_{k}-t_{j})^{4}}dt_{1}\cdots dt_{m}}\\
& = &mVol(B_{\d})\int_{I^{m-1}}{ \prod_{j=2}^{m}{t_{j}^{2}}\cdot \Delta_{m-1}^{4}(t)dt}\\
& = &mVol(B_{\d}) S_{m-1}(3,1,2)\\
&\leq & 2m\d S_{m-1}(3,1,2)\\
\end{eqnarray*}
and so obtain
\begin{eqnarray*}
\frac{Vol\Bigl(\phi^{p}_{el}(\Lambda^{m}\times T_{\d}^{m})\Bigr)}{Vol\Bigl(\phi^{p}_{el}(\Lambda^{m}\times T^{m})\Bigr)}
&=& \frac{\int_{I^{m}_{\d}}{ \prod_{j=1}^{m}{t_{j}^{2}}\cdot \Delta_{m}^{4}(t)dt}}{\int_{I^{m}}{ \prod_{j=1}^{m}
{t_{j}^{2}}\cdot \Delta_{m}^{4}(t)dt}}\\
&\geq& 1- 2m\d \frac{S_{m-1}(3,1,2)}{S_{m}(3,1,2)}\\
\end{eqnarray*}

Using Selberg's formulas \eqref{selbergformula} we compute
\begin{eqnarray*}
\frac{S_{m-1}(3,1,2)}{S_{m}(3,1,2)}&=&
\frac{ \prod_{j=0}^{m-2}{\frac{\Gamma(1+ 2j)
\Gamma(3+2j)^{2}}{2\Gamma(2(m+j)) }}}{ \prod_{j=0}^{m-1}{\frac{\Gamma(1+ 2j)
\Gamma(3+2j)^{2}}{2\Gamma(2(m+j)+2) }}}\\
&=&\frac{2\Gamma(4m-2)}{\Gamma(2m-1)^{2}\Gamma(2m+1)}
\frac{ \prod_{j=0}^{m-1}{\frac{\Gamma(1+ 2j)
\Gamma(3+2j)^{2}}{2\Gamma(2(m+j)) }}}{ \prod_{j=0}^{m-1}{\frac{\Gamma(1+ 2j)
\Gamma(3+2j)^{2}}{2\Gamma(2(m+j)+2) }}}\\
&=&\frac{2\Gamma(4m-2)}{\Gamma(2m-1)^{2}\Gamma(2m+1)}
 \prod_{j=0}^{m-1}{\frac{\Gamma(2(m+j)+2) }{\Gamma(2(m+j))}} \\
&=&\frac{2\Gamma(4m-2)}{\Gamma(2m-1)^{2}\Gamma(2m+1)}
  \frac{\Gamma(4m)}{\Gamma(2m)}\, .
\end{eqnarray*}

\subsection{Even case  of Proof of Theorem \ref{thm_RK}}
\label{sec_RKproof}
For the even case, $n=2m$,
recall the identity  $\mathcal{J}^{c}_{eu}(0,t)\equiv 0$, and the
 volume identity \eqref{vol_Jce}
\begin{equation}
\label{vol_Jce2}
Vol(M^{2m})=\frac{1}{(2m)!m!}\int_{I^{m}}{\mathcal{J}^{c}_{el}(t_{*},t)dt}+
\frac{1}{(2m)!(m-1)!}\int_{I^{m-1}}{\mathcal{J}^{c}_{eu}(t_{*},t)dt}\,
\end{equation}
 Now observe that, for $i\geq 0$,   the identity
\begin{eqnarray}
&&\int_{M^{2m}}{q_{i}}- \frac{t_{*}^{i}}{2m+1}Vol(M^{2m})\notag\\
&=&\frac{\d_{0}(i)+1}{(2m+1)!(m-1)!}\int_{I^{m-1}}{
\mathcal{J}^{c}_{eu}(t_{*},t)dt}\notag\\
&+&\frac{2}{(2m+1)!m!}\int_{I^{m}}{\Sigma t^{i}\mathcal{J}^{c}_{el}(t_{*},t)dt}
+
\frac{2}{(2m+1)!(m-1)!}\int_{I^{m-1}}{\Sigma t^{i}
\mathcal{J}^{c}_{eu}(t_{*},t)dt}\notag
\end{eqnarray}
from Proposition \ref{prop_mom},
evaluated at $t_{*}=0$ becomes
\begin{eqnarray}
&&\int_{M^{2m}}{q_{i}}- \frac{\d_{0}(i)}{2m+1}Vol(M^{2m})\notag\\
&=&\frac{\d_{0}(i)+1}{(2m+1)!(m-1)!}\int_{I^{m-1}}{
\mathcal{J}^{c}_{eu}(0,t)dt}\notag\\
&+&\frac{2}{(2m+1)!m!}\int_{I^{m}}{\Sigma t^{i}\mathcal{J}^{c}_{el}(0,t)dt}
+
\frac{2}{(2m+1)!(m-1)!}\int_{I^{m-1}}{\Sigma t^{i}
\mathcal{J}^{c}_{eu}(0,t)dt}\notag
\end{eqnarray}
where we note that $\Sigma t^{i}=\sum_{j=1}^{m}{t_{j}^{i}}$ in the  integral over
$I^{m}$ and $\Sigma t^{i}=\sum_{j=1}^{m-1}{t_{j}^{i}}$ in the  integral over
$I^{m-1}$.

Subtracting the two,
using the identity $\mathcal{J}^{c}_{eu}(0,t)\equiv 0$, we obtain
\begin{eqnarray}
\label{eq_even_rk1}
 &&-\frac{t_{*}^{i}}{2m+1}Vol(M^{2m})+ \frac{\delta_{0}(i)}{2m+1}Vol(M^{2m})\notag\\
&=&\frac{\d_{0}(i)+1}{(2m+1)!(m-1)!}\int_{I^{m-1}}{
\mathcal{J}^{c}_{eu}(t_{*},t)dt}\notag\\
&+&\frac{2}{(2m+1)!m!}\int_{I^{m}}{\Sigma t^{i}\bigl(\mathcal{J}^{c}_{el}(t_{*},t)-
\mathcal{J}^{c}_{el}(0,t)\bigr)dt}
+
\frac{2}{(2m+1)!(m-1)!}\int_{I^{m-1}}{\Sigma t^{i}
\mathcal{J}^{c}_{eu}(t_{*},t)dt}\notag
\end{eqnarray}
and applying the volume identity \eqref{vol_Jce2} we obtain
\begin{eqnarray}
\label{eq_even_rk2}
 &&-\frac{t_{*}^{i}}{2m+1}Vol(M^{2m})\notag\\
&=& -\frac{\d_{0}(i)}{(2m+1)!m!}\int_{I^{m}}{\mathcal{J}^{c}_{el}(t_{*},t)
dt}+ \frac{1}{(2m+1)!(m-1)!}\int_{I^{m-1}}{
\mathcal{J}^{c}_{eu}(t_{*},t)dt}\notag\\
&+&\frac{2}{(2m+1)!m!}\int_{I^{m}}{\Sigma t^{i}\bigl(\mathcal{J}^{c}_{el}(t_{*},t)-
\mathcal{J}^{c}_{el}(0,t)\bigr)dt}
+
\frac{2}{(2m+1)!(m-1)!}\int_{I^{m-1}}{\Sigma t^{i}
\mathcal{J}^{c}_{eu}(t_{*},t)dt}\notag \, .
\end{eqnarray}

Then, if we let  $\phi(s):=\sum_{i=0}^{2m}{\phi_{i}s^{i}}$ be a polynomial of degree
$n=2m$, summing over each identity in \eqref{eq_even_rk2}, we conclude  that
\begin{eqnarray*}
 &&-\frac{\phi(t_{*})}{2m+1}Vol(M^{2m})\notag\\
&=&\phi_{0}\Bigl(\frac{1}{(2m+1)!(m-1)!}\int_{I^{m-1}}{
\mathcal{J}^{c}_{eu}(t_{*},t)dt} -\frac{1}{(2m+1)!m!}\int_{I^{m}}{\mathcal{J}^{c}_{el}(t_{*},t)
dt}\Bigr)
\notag\\
&+& \sum_{j=1}^{2m}{\phi_{j}}\frac{1}{(2m+1)!(m-1)!}\int_{I^{m-1}}{
\mathcal{J}^{c}_{eu}(t_{*},t)dt}
\notag\\
&+&\frac{2}{(2m+1)!m!}\int_{I^{m}}{(\Sigma \phi)(t)\bigl(\mathcal{J}^{c}_{el}(t_{*},t)-
\mathcal{J}^{c}_{el}(0,t)\bigr)dt}
+
\frac{2}{(2m+1)!(m-1)!}\int_{I^{m-1}}{(\Sigma \phi)(t)
\mathcal{J}^{c}_{eu}(t_{*},t)dt}\notag \\
&=&-\phi_{0}
\frac{1}{(2m+1)!m!}\int_{I^{m}}{\mathcal{J}^{c}_{el}(t_{*},t)
dt}
\notag\\
&+& \sum_{j=0}^{2m}{\phi_{j}}\frac{1}{(2m+1)!(m-1)!}\int_{I^{m-1}}{
\mathcal{J}^{c}_{eu}(t_{*},t)dt}
\notag\\
&+&\frac{2}{(2m+1)!m!}\int_{I^{m}}{(\Sigma \phi)(t)\bigl(\mathcal{J}^{c}_{el}(t_{*},t)-
\mathcal{J}^{c}_{el}(0,t)\bigr)dt}
+
\frac{2}{(2m+1)!(m-1)!}\int_{I^{m-1}}{(\Sigma \phi)(t)
\mathcal{J}^{c}_{eu}(t_{*},t)dt}\notag \\
\end{eqnarray*}

Since $\phi(0)=\phi_{0}$ and $\phi(1)=\sum_{i=0}^{2m}{\phi_{i}}$ the assertion follows by multiplication by
$2m+1$.

\subsection{Assorted Technical Results}
\begin{prop}
\label{prop_tech}
We have
\[
m^{2} \leq 8 (\frac{e}{2})^{4m}, \quad m \geq 1\, .\]
\end{prop}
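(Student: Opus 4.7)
The plan is to reduce the inequality $m^2 \leq 8(e/2)^{4m}$ to a form suitable for elementary induction on $m \geq 1$. Taking logarithms, the claim is equivalent to
\[ 2\ln m \;\leq\; 3\ln 2 + 4m(1-\ln 2), \]
in which the right-hand side grows linearly in $m$ with positive slope $4(1-\ln 2) > 0$ while the left-hand side grows only logarithmically; hence the inequality is true for all sufficiently large $m$, and the task is essentially to pin down the small cases.

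I would proceed by induction on $m$, but with base cases $m=1$ and $m=2$ verified numerically. For $m=1$, $8(e/2)^{4} = e^{4}/2 > 27 \geq 1$; for $m=2$, $8(e/2)^{8} = e^{8}/32 > 93 \geq 4$. For the inductive step, assuming $m^{2}\leq 8(e/2)^{4m}$ for some $m \geq 2$, I would estimate
\[ (m+1)^{2} \;=\; m^{2}\bigl(1 + \tfrac{1}{m}\bigr)^{2} \;\leq\; \tfrac{9}{4}\, m^{2} \;\leq\; \tfrac{9}{4}\cdot 8(e/2)^{4m} \;=\; 18\,(e/2)^{4m}, \]
and the desired conclusion $(m+1)^{2}\leq 8(e/2)^{4(m+1)} = (e^{4}/2)(e/2)^{4m}$ then reduces to $18 \leq e^{4}/2$, i.e.\ $e^{4}\geq 36$, which is immediate from $e > 2.5$ (indeed $e^{4}\approx 54.6$).

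The only subtle point is that the naive one-step induction starting from $m=1$ fails: at $m=1$ one has $(1+1/m)^{2}=4$, which exceeds $(e/2)^{4} = e^{4}/16 \approx 3.41$, so the contraction factor needed for the inductive step is not available. This is precisely why $m=2$ must be checked as a second base case; from $m=2$ onwards the factor $(1+1/m)^{2}\leq 9/4$ is comfortably smaller than $(e/2)^{4}$, and the induction closes. No further obstacle is anticipated.
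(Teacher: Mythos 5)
Your proof is correct and takes essentially the same approach as the paper: induction with base cases $m=1,2$, bounding $(1+1/m)^2 \le 9/4$ for $m\ge 2$, and reducing the inductive step to $e^4 \ge 36$. The paper phrases the key inequality slightly differently, as $\tfrac{m+1}{m}\le\tfrac{3}{2}\le(e/2)^2$ (using $e^2\ge 6$), but this is the same estimate; your explicit remark about why the $m=1$ base case alone does not suffice is a nice clarification not spelled out in the paper.
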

\begin{proof}
We proceed by induction. The inequality is clearly true for $m=1,2.$  Therefore,
   suppose that it is true for some $m\geq 2$. Then, since
   \[\frac{m+1}{m} \leq \frac{3}{2}\leq \bigl(\frac{e}{2}\bigr)^{2},\quad  m\geq 2\]
   follows from $6 \leq e^{2}$, we conclude that
   \begin{eqnarray*}
   (m+1)^{2}&=& \frac{(m+1)^{2}}{m^{2}}m^{2} \\
	   &\leq& \frac{(m+1)^{2}}{m^{2}} 8 (\frac{e}{2})^{4m} \\
	   &\leq& \bigl(\frac{e}{2}\bigr)^{4} 8 (\frac{e}{2})^{4m}\\
	   &\leq&  8 (\frac{e}{2})^{4m+4}
	   \end{eqnarray*}
	   thus establishing the inequality for $m+1$ and finishing the proof.
	   \end{proof}

	   \begin{prop}
	   \label{beta_lowerbound}
	   We have
	   \[B(a,a) \geq \frac{4}{a}2^{-2a}, \quad a >1\]
	   \end{prop}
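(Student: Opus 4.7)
The plan is to reduce the bound to an elementary one-dimensional integral estimate by exploiting the symmetry of the integrand $t^{a-1}(1-t)^{a-1}$ about $t = 1/2$. First I would write
\[ B(a,a) = \int_0^1 \bigl(t(1-t)\bigr)^{a-1} dt \]
and make the substitution $t = (1+u)/2$, so that $t(1-t) = (1-u^2)/4$ and $dt = du/2$, with $u$ ranging over $[-1,1]$. Using the symmetry $u \mapsto -u$, this yields
\[ B(a,a) = \frac{1}{2} \int_{-1}^{1} \Bigl(\frac{1-u^2}{4}\Bigr)^{a-1} du = 4^{1-a} \int_0^1 (1-u^2)^{a-1} du. \]

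Next I would lower-bound the remaining integral by discarding the factor $(1+u)^{a-1}$, which is at least $1$ on $[0,1]$ whenever $a \geq 1$. That gives
\[ \int_0^1 (1-u^2)^{a-1} du = \int_0^1 (1-u)^{a-1}(1+u)^{a-1} du \geq \int_0^1 (1-u)^{a-1} du = \frac{1}{a}. \]
Combining the two displays yields $B(a,a) \geq 4^{1-a}/a = \frac{4}{a} 2^{-2a}$, which is exactly the claim.

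There is no real obstacle here: the only substantive step is choosing the right substitution to expose the factor $4^{-(a-1)}$, after which the inequality $(1+u)^{a-1}\geq 1$ (valid for $a \geq 1$, hence certainly for $a>1$) finishes the job. Should one wish to sharpen the bound, one could instead estimate $\int_0^1 (1-u^2)^{a-1} du$ via a Gaussian-type comparison, but the crude drop-the-factor estimate is already tight enough for the stated inequality.
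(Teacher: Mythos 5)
Your proof is correct and is essentially the paper's proof in different clothing: the substitution $t=(1+u)/2$ followed by dropping the factor $(1+u)^{a-1}\geq 1$ is, after undoing the change of variables, exactly the paper's step of writing $B(a,a)=2\int_0^{1/2}t^{a-1}(1-t)^{a-1}\,dt$ and bounding $(1-t)^{a-1}\geq (1/2)^{a-1}$ on $[0,1/2]$. Same symmetry reduction, same crude pointwise estimate, same constant.
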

	   \begin{proof}
	   We have from the integral formula
	   \begin{eqnarray*}
	   B(a,b)&=&\int_{0}^{1}{t^{a-1}(1-t)^{a-1}dt}\\
		   &=&2\int_{0}^{\frac{1}{2}}{t^{a-1}(1-t)^{a-1}dt}\\
		   &\geq &2\bigl(\frac{1}{2}\bigr)^{a-1}\int_{0}^{\frac{1}{2}}{t^{a-1}dt}\\
		   &=&2\bigl(\frac{1}{2}\bigr)^{a-1}\frac{1}{a}\bigl(\frac{1}{2}\bigr)^{a}\\
		   &=&\frac{4}{a}2^{-2a}
		   \end{eqnarray*}
		   \end{proof}

\section*{Acknowledgements}
\addcontentsline{toc}{section}{Acknowledgements}

We would like to thank G\'{e}rard Letac for his helpful comments, in particular for his substantial simplification,
 included here, of our previous
 proof of Lemma 4.1. We would also like to thank one of the referees for many helpful comments which we also feel improved
the manuscript.

The authors gratefully acknowledge this work supported by the Air Force Office of Scientific Research under Award Number
FA9550-12-1-0389 (Scientific Computation of Optimal Statistical Estimators).

\addcontentsline{toc}{section}{References}
\bibliographystyle{plain}
\bibliography{./refs}
\end{document}